\documentclass[12pt,fleqn]{article}

\usepackage{amscd,amsmath,amssymb,amsthm}
\usepackage{geometry} 
\usepackage[pdfpagemode=UseNone,pdfstartview=FitH]{hyperref}
\usepackage{indentfirst}

\newcommand{\A}{{\mathcal A}}
\newcommand{\Ap}[1][]{A_p #1}

\newcommand{\Bp}[1][]{B_p #1}
\newcommand{\bdry}[1]{\partial #1}
\newcommand{\closure}[1]{\overline{#1}}
\newcommand{\comp}{\circ}

\newcommand{\dint}{\ds{\int}}

\newcommand{\ds}[1]{\displaystyle #1}
\newcommand{\dualp}[3][]{\left(#2,#3\right)_{#1}}

\newcommand{\F}{{\mathcal F}}

\newcommand{\incl}{\hookrightarrow}
\newcommand{\M}{{\mathcal M}}
\newcommand{\N}{\mathbb N}
\newcommand{\norm}[2][]{\left\|#2\right\|_{#1}}
\renewcommand{\o}{\text{o}}
\newcommand{\PS}[1]{$(\text{PS})_{#1}$}
\newcommand{\pnorm}[2][]{\if #1'' \left|#2\right|_p \else \left|#2\right|_{#1} \fi}
\newcommand{\R}{\mathbb R}
\newcommand{\RP}{\R \text{P}}

\newcommand{\seq}[1]{\left(#1\right)}
\newcommand{\set}[1]{\left\{#1\right\}}

\newcommand{\Z}{\mathbb Z}
\newcommand{\om}{\int_{\Omega}}
\newcommand{\w}[1]{\widetilde{#1}}
\newcommand{\e}{e_{\varepsilon,\delta}}

\DeclareMathOperator{\divg}{div}

\newenvironment{enumroman}{\begin{enumerate}
		\renewcommand{\theenumi}{$(\roman{enumi})$}
		\renewcommand{\labelenumi}{$(\roman{enumi})$}}{\end{enumerate}}

\newenvironment{properties}[1]{\begin{enumerate}
		\renewcommand{\theenumi}{$(#1_\arabic{enumi})$}
		\renewcommand{\labelenumi}{$(#1_\arabic{enumi})$}}{\end{enumerate}}

\newtheorem{corollary}{Corollary}[section]
\newtheorem{lemma}[corollary]{Lemma}
\newtheorem{proposition}[corollary]{Proposition}
\newtheorem{theorem}[corollary]{Theorem}

\theoremstyle{definition}

\theoremstyle{remark}

\numberwithin{equation}{section}

\title{\bf Critical quasilinear elliptic systems with convex subcritical perturbations\thanks{{\em MSC2020:} 35J47, 35J50, 58E05.
		\newline \indent\; {\em Key Words and Phrases:} critical $p$--Laplacian system; cohomological index; nonhomogeneous perturbation.}}
\author{\bf Elisandra Gloss \& Bruno Ribeiro\\
	Departamento de Matem\'atica\\
	Universidade Federal da Para\'iba\\
	Cidade Universit\'aria, 58051-900, João Pessoa, Brazil\\
	\em elisandra.gloss@academico.ufpb.br \& bhcr@academico.ufpb.br\\
	[\bigskipamount]
	\bf Artur Jorge Marinho \& Kanishka Perera\\
	Department of Mathematics\\
	Florida Institute of Technology\\
	150 W University Blvd, Melbourne, FL 32901-6975, USA\\
	\em amarinho2024@my.fit.edu \& kperera@fit.edu}
\date{}

\begin{document}
	
	\maketitle
	
	\begin{abstract}
		We study a class of Dirichlet problems for coupled quasilinear elliptic systems driven by the $p$--Laplacian in a bounded domain, where the nonlinear terms split into a critical homogeneous part and a convex subcritical perturbation. Using variational methods, a nonlinear eigenvalue theory based on the Fadell--Rabinowitz $\mathbb Z_2$--cohomological index, and a refined linking construction, we prove the existence of a nontrivial solution for every parameter value under suitable dimensional assumptions. We also treat a nonhomogeneous version with small forcing terms by applying a recent abstract perturbation theorem, yielding the existence of two distinct nontrivial solutions. 
		
	\end{abstract}

	%\begin{center}
	%\begin{minipage}{12cm}
	%\tableofcontents
	%\end{minipage}
	%\end{center}

	\section{Introduction}

	Quasilinear elliptic problems involving critical nonlinearities have been a central topic in nonlinear analysis over the past decades.
	The presence of the critical Sobolev exponent marks the threshold at which compactness of the Sobolev embedding fails, leading to a rich and delicate existence theory that combines variational methods, concentration--compactness arguments, and topological tools.
	Since the pioneering works of Brezis and Nirenberg \cite{Brezis-Nirenberg-1983}, Cerami, Fortunato and Struwe \cite{Cerami-Fortunato-Struwe-1984}, Gueda and Veron \cite{Guedda-Veron-1989}, Tarantello \cite{Tarantello-1992}, Azorero and Alonso \cite{Garcia-Azorero-Peral1994} and many others, critical growth problems  have been extensively investigated, both in homogeneous and nonhomogeneous settings.
	
	Elliptic systems with critical nonlinearities present additional difficulties and new phenomena that are not present in the scalar case.
	Besides the loss of compactness at the critical level, one must deal with genuine coupling effects, the possible existence of semitrivial solutions, and the lack of a natural ordering or maximum principle.
	As a consequence, the variational structure of systems is typically more involved, and the construction of critical points requires refined minimax techniques.
	
	For elliptic systems with critical growth, there is by now an extensive literature. In the semilinear case ($p=2$), one can refer, for instance, to
	\cite{Alves-DeMoraisFilho-Souto-2000,Amster-Napoli-Mariani-2002, Chen-Zou-CV-2015, DeMoarisFilho-Faria-Miyagaki-Pereira-2012, Furtado-daSilva2012} for critical Sobolev-type systems, and to nonhomogeneous critical frameworks developed  in \cite{DeMoraisFilho-Pereira-2008,Ribeiro-2010} and related  critical-growth models in \cite{Gloss-Medeiros-Severo-2023}. In the quasilinear setting (general $p$), systems driven by
	$p$-Laplacian-type operators were studied, for instance, in \cite{Boccardo-DeFigueiredo-2002}, while critical $p$--Laplacian systems and their variational structures are treated in
	\cite{Barbosa-Montenegro-2011, Bisci&all-2025, Carmona&all-2013, Chu-Lei-Suo-2017, Chu-Tang-2013,
		DeMoraisFilho-Souto-99, Guo-Perera-Zou-2017,Liu-Zhao-Liu-2018, Lu2012, Silva-Macedo-2018}, while further developments for quasilinear systems of gradient or anisotropic type can be found in \cite{Candela&all-2021,Gluck-2025,Marinho-Perera-2025}.

	In this paper we study critical quasilinear elliptic systems of the form
	\begin{equation}\label{101}
		\left\{
		\begin{aligned}
			- \Delta_p u & = \lambda F_u(u,v) + Q_u(u,v) && \text{in } \Omega,\\
			- \Delta_p v & = \lambda F_v(u,v) + Q_v(u,v) && \text{in } \Omega,\\
			u = v & = 0 && \text{on } \partial\Omega,
		\end{aligned}
		\right.
	\end{equation}
	where $\Omega$ is a bounded domain in $\R^N$, $\Delta_p u = \divg\, (|\nabla u|^{p-2}\, \nabla u)$ is the $p$-Laplacian of $u$, with $1 < p < N$, $\lambda > 0$ is a parameter and the nonlinearity $Q$ exhibits a \emph{critical--subcritical decomposition}
	\[
	Q = H + G.
	\]
	The function $F$ is assumed to be convex, even, and $p$--homogeneous, while $H$ is $p^*$--homogeneous and encodes the critical growth of the system. 
	The lower--order term $G$ is subcritical and satisfies an Ambrosetti--Rabinowitz type condition.
	This structure is natural from the variational viewpoint and allows us to isolate the critical effects associated with the Sobolev exponent from the compact perturbations introduced by $G$. The precise hypotheses for $F$ and $G$ are described later.
	
	When $G\equiv0$ in system \eqref{101}, de Morais Filho and Souto \cite{DeMoraisFilho-Souto-99} proved the existence of nontrivial solutions for such systems under dimensional restrictions and for $\lambda>0$ sufficiently small.
	This result can be seen as a quasilinear counterpart of early scalar results, where compactness is recovered either by restricting parameters or by working below the critical energy level.
	A natural question left open by this and related works is whether nontrivial solutions exist for \emph{all} $\lambda>0$, and how the answer depends on the dimension and on the structure of the nonlinearities.
	
	Our first goal is to address this question for the homogeneous problem and for systems with  subcritical perturbations.
	We prove that, under suitable dimensional conditions, system \eqref{101} admits a nontrivial solution for every $\lambda>0$.
	Moreover, we construct a sequence of variational eigenvalues associated with the problem by means of the $\mathbb Z_2$--cohomological index of Fadell and Rabinowitz \cite{Fadell-Rabinowitz-1978} (see Theorem \ref{Theorem 4}) and, when the parameter $\lambda$ does not belong to this eigenvalue sequence, the dimensional restrictions in our results can be substantially relaxed. This strategy has proved to be particularly effective in critical problems; see, for instance, \cite{Guo-Perera-Zou-2017,Perera-CV-2021,Pe-Ag-OR-book}.
	
	A second goal of this paper is to study nonhomogeneous perturbations of \eqref{101}.
	More precisely, we consider the system
	\begin{equation}\label{101 pert}
		\left\{
		\begin{aligned}
			- \Delta_p u & = \lambda F_u(u,v) + Q_u(u,v) + \gamma(x) && \text{in } \Omega,\\
			- \Delta_p v & = \lambda F_v(u,v) + Q_v(u,v) + \eta(x) && \text{in } \Omega,\\
			u = v & = 0 && \text{on } \partial\Omega,
		\end{aligned}
		\right.
	\end{equation}
	where $\gamma,\eta$ belong to the natural dual space $L^{(p^*)\prime}(\Omega)$, with $p^*=Np/(N-p)$.
	Problems of this type go back to the seminal work of Tarantello \cite{Tarantello-1992} in the scalar case and have been extensively studied since then.
	In the critical regime, the presence of nonhomogeneous terms typically  requires perturbative arguments.
	
	To treat \eqref{101 pert}, we adopt the abstract perturbation framework developed recently by Perera \cite{Perera-JDE-2024}.
	Roughly speaking, this theory shows that, under suitable linking and compactness assumptions for the homogeneous functional, small nonhomogeneous perturbations give rise to multiple nontrivial solutions.
	In our setting, we verify these assumptions by combining the eigenvalue structure of the homogeneous system, sharp Sobolev-type inequalities associated with the critical nonlinearity $H$, and a careful construction of linking paths using truncated extremal functions.
	
	Finally, we address the issue of semitrivial solutions.
	Since systems of the form \eqref{101} may admit solutions with one vanishing component, we introduce an additional structural condition on the nonlinearities that excludes this possibility.
	Under this assumption, all solutions obtained in our main theorems are fully nontrivial, reflecting the genuine coupling of the system.
	
	The paper is organized as follows.
	We finish this introduction by stating the main theorems of this paper; then, in Section~\ref{sec-prelim}, we collect preliminary material, including basic properties of homogeneous functions and the abstract critical point framework based on the cohomological index.
	Section~\ref{sec-PS} is devoted to the verification of a local Palais--Smale condition below the critical energy level.
	In Section~\ref{sec-geo} we establish the geometric conditions required for the linking argument, finishing the proof of the main theorems.

	\subsection{Hypotheses and main theorems}

	Assume $F: \R^2 \to \R$ is a $C^1$-function satisfying
	\begin{properties}{F}
		\item \label{F1} $F$ is $p$-homogeneous and even: $F(\alpha s,\alpha t) = |\alpha|^p\, F(s,t)$ for all $(s,t) \in \R^2$ and $\alpha \in \R$;
		\item \label{F2} $F$ is uniformly positive: $\exists c_F > 0$ such that $F(s,t) \ge c_F\, (|s|^p + |t|^p)$ for all $(s,t) \in \R^2$,
		\item \label{F3} $F$ is convex.
	\end{properties}
	Let \(Q=H+G\) for $C^1$ functions $H,G: \R^2\to[0,\infty)$ satisfying
	\begin{properties}{H}
		\item \label{H1} $H$ is $p^*$-homogeneous: $H(\alpha s,\alpha t) = \alpha^{p^*}\, H(s,t)$ for all $(s,t) \in \R^2$ and $\alpha>0$;
		\item \label{H2} $\exists\, c_H > 0$ such that $H(|s|,|t|)\ge H(s,t) \ge c_H\, (|s|^{p^*} + |t|^{p^*})$ for all $(s,t) \in \R^2$;
		\item \label{H3} $(s,t)\mapsto H(s^{1/p^*},t^{1/p^*})$ is concave on $[0,\infty)\times[0,\infty)$;
	\end{properties}
	and 
	\begin{properties}{G}
		\item \label{G subcri} $G$ is subcritical: There are $C>0$ and $p_i\in(p,p^\ast)$, $i\in\{1,2,3,4\}$, such that 
		$$|\nabla G( s, t)| \leq C(|s|^{p_1-1}+|s|^{p_2-1}+|t|^{p_3-1}+|t|^{p_4-1})\quad\forall (s,t) \in \R^2;$$
		\item \label{AR G} $\exists r\in(p,p^\ast)$ such that $rG(s,t)\leq  sG_s(s,t)+tG_t(s,t) $ for all $(s,t) \in \R^2$; 
	\end{properties}
	For a $q$-homogeneous function $K\in C^1(\R^2,\R)$ we know that
	\begin{equation} \label{201}
		s\, K_s(s,t) + t\, K_t(s,t) = q\, K(s,t) \quad \forall (s,t) \in \R^2
	\end{equation}
	and that $\exists C_K > 0$ such that
	\begin{equation} \label{202}
		|K(s,t)|\le C_K\, (|s|^q + |t|^q) \quad \forall (s,t) \in \R^2.
	\end{equation}

	Our first main result reads as follows.
	
	\begin{theorem}\label{Theorem 1}
		Assume \ref{F1}--\ref{F3}, \ref{H1}--\ref{H3}, and \ref{G subcri}--\ref{AR G}.  
		If
		\(
		{N^{2}}/{(N+1)} > p^{2},
		\)
		then system~\eqref{101} admits a nontrivial solution for every $\lambda>0$. Moreover, if
		\(1<p<N\) and ${N^{2}}/{(N+1)} \le p^{2}$, 
		the same conclusion holds provided that the nonlinearity $G$ satisfies, in addition,
		\begin{properties}{G}
			\addtocounter{enumi}{2}
			\item\label{G small N}
			There exist constants $c_G>0$ and
			\(
			q>p^{*}-{Np}/{[N(p-1)+p]}
			\)
			%$\max\set{p^{*}-{Np}/{[N(p-1)+p]},p^*(1-1/p)}=p^{*}-{Np}/{[N(p-1)+p]}$ if $N\le p^2+p$, which occurs for  \(1<p<N\) and ${N^{2}}/{(N+1)} \le p^{2}$.
			such that 
			\[
			G(s,t)\ge c_G \left(|s|^{q}+|t|^q\right) \quad \text{for all }(s,t)\in\R^{2}.
			\]
		\end{properties}
	\end{theorem}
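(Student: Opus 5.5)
We obtain the solution as a critical point of
\[
E(u,v)=\frac1p\int_\Omega\big(|\nabla u|^p+|\nabla v|^p\big)-\lambda\int_\Omega F(u,v)-\int_\Omega H(u,v)-\int_\Omega G(u,v)
\]
on $W=W^{1,p}_0(\Omega)\times W^{1,p}_0(\Omega)$. The argument has three steps: a compactness threshold, a linking geometry, and an energy estimate below that threshold.

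\textbf{Step 1: the compactness threshold.} Let
\[
S_H=\inf\Big\{\textstyle\int(|\nabla u|^p+|\nabla v|^p):\ \int H(u,v)=1\Big\},
\]
and set the critical level $c^*=\frac1N S_H^{N/p}$.

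We show that $E$ satisfies \PS{c} for every $c<c^*$. Take a \PS{c} sequence. Boundedness follows from \ref{AR G} together with \eqref{201} applied to $F$ and $H$. Writing $E(w_j)-\frac1r E'(w_j)w_j$ gives control of $\|w_j\|^p$ modulo $\lambda\int F$, and the term $\lambda\int F$ is absorbed by the standard contradiction argument in the normalized sequence.

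Next we pass to a weak limit $w$. Set $\tilde w_j=w_j-w$ and split $\int H$ by a Brezis--Lieb lemma for the $p^*$-homogeneous $H$. The $G$ and $F$ terms are compact by \ref{G subcri}. We then obtain
\[
\|\tilde w_j\|^p-\int H(\tilde w_j)\to0 .
\]
By the definition of $S_H$, either $\tilde w_j\to0$ or $\liminf\|\tilde w_j\|^p\ge S_H^{N/p}$. In the second case, $E(w)\ge0$ (by the AR structure, since $E'(w)=0$) gives $c\ge c^*$, which is excluded.

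\textbf{Step 2: linking geometry.} Using the eigenvalues $\lambda_k\nearrow\infty$ of
\[
-\Delta_p w=\lambda\nabla F(w)
\]
defined through the cohomological index (Theorem~\ref{Theorem 4}), fix $k$ with $\lambda_k\le\lambda<\lambda_{k+1}$. If $\lambda<\lambda_1$, we use the mountain pass instead.

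\begin{itemize}
\item On the sublevel side, take $A_0=\{\Psi\le\lambda_k\}$ on the unit sphere.
\item On the other side, take $B_0=\{\Psi\ge\lambda_{k+1}\}$, where $\Psi(w)=\|w\|^p/\int F(w)$.
\end{itemize}
On $\{\rho w:w\in B_0\}$ we have $E\ge c_0>0$ for small $\rho$, since $H$ and $G$ are higher order. Following the refined construction of Perera et al., we deform $A_0$ into a compact symmetric set $A_0^\varepsilon$ of index $k$. Its functions are truncated away from a small ball $B_\delta(x_0)$, where they vanish. We then join $A_0^\varepsilon$ with the truncated extremal $u_\varepsilon$ of $S_H$ concentrated at $x_0$, placed along a minimizing direction $(a,b)$ of $H$ using \ref{H3}. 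This produces the linking set
\[
X=\{\pi((1-\tau)w+\tau u_\varepsilon)\}\cup\ldots
\]
with $\max_X E$ controlled, and the abstract linking theorem yields a critical level $c\in[c_0,\sup_X E]$.

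\textbf{Step 3: energy estimate (main obstacle).} We must show $\sup_X E<c^*$. Since the supports are disjoint, $E$ splits on each ray. The $A_0^\varepsilon$ part contributes at most a truncation error $O(\delta^{(N-p)/(p-1)})$. The bubble part, using the standard estimates
\[
\|u_\varepsilon\|^p\le S_H^{N/p}+O\big(\varepsilon^{(N-p)/(p-1)}\big)
\]
and the $\lambda\int F(u_\varepsilon)$ term, gains at least $\varepsilon^{p}$ when $N>p^2$. The choice $\varepsilon=\delta^{\kappa}$ must be balanced against the mixing error of order $\delta^{(N-p)/(p-1)}$ coming from $\lambda_k$-sublevel functions. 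This balancing works exactly when $N^2/(N+1)>p^2$, and it gives the first claim.

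When $N^2/(N+1)\le p^2$, the $\lambda F$ gain is insufficient. Instead we use \ref{G small N}:
\[
\int G(u_\varepsilon)\ge c_G\int u_\varepsilon^q\sim\varepsilon^{N-q(N-p)/p},
\]
which is valid for $q>p^*(1-1/p)$. This term dominates $\varepsilon^{(N-p)/(p-1)}$ plus the $\delta$-errors precisely when $q>p^*-Np/[N(p-1)+p]$.

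Verifying these exponent balances is the delicate part. I expect the hardest point to be the mixed-term control between the truncated eigenfunction set and the bubble, where convexity \ref{F3} is needed to estimate $F$ on convex combinations. The resulting critical point is nontrivial because $c\ge c_0>0$.
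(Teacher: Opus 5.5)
Your architecture matches the paper's: $(\mathrm{PS})_c$ below $\frac1N S_H^{N/p}$, a compact index-$k$ set truncated near $x_0$ and joined to a bubble along the optimal direction $(s_0,t_0)$, splitting by disjoint supports, then balancing $\delta$ against $\varepsilon$. However, the quantitative step that decides Theorem~\ref{Theorem 1}, namely the resonant case $\lambda=\lambda_k$, is wrong as stated. Truncating the index-$k$ set on $B_\delta(x_0)$ does not cost $O(\delta^{(N-p)/(p-1)})$, and for general elements of $W$ it gives no rate at all. The missing ingredient is a compact symmetric $\mathcal C\subset\Psi^{\lambda_k}$ of index $k$ that is bounded in $L^\infty\cap C^1_{\text{loc}}$. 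This is Proposition~\ref{Proposition 2}, obtained from iterates of $J=\pi\circ K$ as in Degiovanni--Lancelotti. These bounds put the truncated set in $\Psi^{\lambda_k+c_3\delta^{N-p}}$. Combined with a uniform bound $\int_\Omega H\ge c_5>0$ on that set, each ray then contributes at most $\sup_{s\ge0}\bigl(c\,\delta^{N-p}s^p/p-c_5s^{p^*}\bigr)=O(\delta^{N(N-p)/p})$, since $p^*/(p^*-p)=N/p$. On the bubble side you dropped the $\delta$-dependence of the cut-off error. In the paper's parametrization it is $\varepsilon^{(N-p)/p}\delta^{-(N-p)/(p-1)}$, i.e.\ $(\varepsilon/\delta)^{(N-p)/(p-1)}$ in your scale parametrization. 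This is exactly the term that prevents taking $\delta$ small.

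With these exponents and $\delta=\varepsilon^\kappa$ (gain $\varepsilon^{p-1}$ in the paper's parametrization), you need $\kappa N(N-p)/p>p-1$ and $(N-p)\bigl(1/p-\kappa/(p-1)\bigr)>p-1$. This is solvable iff $N^2/(N+1)>p^2$. With the gain $\varepsilon^{\frac{p-1}{p}(N-q(N-p)/p)}$ coming from \ref{G small N}, it is solvable iff $q>p^*-Np/[N(p-1)+p]$. With your exponent $\delta^{(N-p)/(p-1)}$, the same balancing would instead give $N>2p^2-p$. So the thresholds you quote are asserted, not derived from your estimates. The same $\delta^{N-p}$ bound, with $R$ chosen independently of $\delta$ and $\varepsilon$ (Lemma~\ref{lemma1-5}), is also what yields $\sup E_0(A)<\inf E_0(B)$ when $\lambda=\lambda_k$, and you did not check this.

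Two smaller points. First, the limit identity in Step~1 is $\|\tilde w_j\|^p-p^*\int_\Omega H(\tilde w_j)\to0$, so $S_H$ must be normalized with $p^*\int_\Omega H$ for the threshold $\frac1N S_H^{N/p}$ to be correct. Second, \ref{F3} is not needed for cross terms, which vanish identically by disjoint supports. It is needed to place the eigenvalue problem in the abstract framework via Proposition~\ref{Proposition 1}, which underlies Theorem~\ref{Theorem 4} and Proposition~\ref{Proposition 2}.
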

	
	We also show that the restrictions on the relations between the parameters $N$, $p$, and $q$ can be relaxed whenever $\lambda$ does not belong to a sequence of minimax eigenvalues $\{\lambda_k\}$ associated with the problem 
	\begin{equation}\label{102}
		\left\{
		\begin{aligned}
			- \Delta_p u & = \lambda F_u(u,v) && \text{in } \Omega,\\[4pt]
			- \Delta_p v & = \lambda F_v(u,v) && \text{in } \Omega,\\[4pt]
			u = v & = 0 && \text{on } \partial\Omega,
		\end{aligned}
		\right.
	\end{equation}
	which is constructed via the $\Z_2$--cohomological index (see Theorem~\ref{Theorem 4}).
	
	Eigenvalue problems for quasilinear elliptic systems go back, in particular,
	to the work of de Th\'elin \cite{DeThelin-1990}; see also
	\cite{Arcoya-Borgia-Cingolani-2026, Borgia-Cingolani-Vannella-2024}
	for recent results concerning resonance phenomena for such systems.
	
	The assumptions in the next theorem are more natural and coincide with those typically imposed in the existing literature on scalar equations as well as on gradient systems (see \cite{DeMoraisFilho-Souto-99}).
	
	\begin{theorem}\label{Theorem 2}
		Assume \ref{F1}--\ref{F3}, \ref{H1}--\ref{H3}, and \ref{G subcri}--\ref{AR G}.  
		If $N\ge p^{2}$, then system~\eqref{101} admits a nontrivial solution for every $\lambda\in(0,+\infty)\backslash\{\lambda_k\}$. Moreover, if
		$p<N<p^2$, the same conclusion holds provided that the nonlinearity $G$ satisfies, in addition,
		\begin{properties}{G}
			\addtocounter{enumi}{3}
			\item\label{G small N nonresonant}
			There exist constants $c_G>0$ and
			\(q>p^{*}-p/(p-1)\)  such that
			\[
			G(s,t)\ge c_G \left(|s|^{q}+|t|^q\right) \quad \text{for all }(s,t)\in\R^{2}.
			\]
		\end{properties}
	\end{theorem}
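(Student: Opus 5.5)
We work with the functional
\[
E(u,v)=\frac1p\om\big(|\nabla u|^p+|\nabla v|^p\big)-\lambda\om F(u,v)-\om H(u,v)-\om G(u,v)
\]
on $W=W^{1,p}_0(\Omega)\times W^{1,p}_0(\Omega)$. Its critical points are weak solutions of \eqref{101}. Let $S_H$ denote the best constant in
\[
\om\big(|\nabla u|^p+|\nabla v|^p\big)\ge S_H\Big(\om H(u,v)\Big)^{p/p^*}.
\]
By \ref{H2}--\ref{H3}, $S_H$ is attained along rescalings $(s_0U_\varepsilon,t_0U_\varepsilon)$ of the Talenti extremal $U_\varepsilon$, for a suitable direction $(s_0,t_0)$ maximizing $H$ on the $\ell^p$ unit circle.

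The plan has four steps.

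\textbf{Step 1: local (PS) condition.} We show that $E$ satisfies the \PS{c} condition for all $0<c<\frac1N S_H^{N/p}$. Boundedness of a \PS{c} sequence follows from \ref{AR G}, the $p$-homogeneity \eqref{201} of $F$, and the $p^*$-homogeneity of $H$. We compare $E-\frac1r E'\cdot(u,v)$, and the $\lambda F$ term is absorbed because $r>p$ and the gradient term dominates modulo $L^p$ compactness. Then the concentration--compactness principle applied to $|\nabla u|^p+|\nabla v|^p$ and $H(u,v)$ shows that any atom carries energy at least $\frac1N S_H^{N/p}$. Hence there is strong convergence below that level. The terms $F$ and $G$ are compact by \ref{G subcri}.

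\textbf{Step 2: eigenvalue setup.} We take the minimax eigenvalues $\lambda_k\nearrow\infty$ of \eqref{102} from Theorem~\ref{Theorem 4}, defined on the manifold $\M=\{\om F(u,v)=1\}$ via the cohomological index $i$. Given $\lambda>0$, choose $k$ with $\lambda_k\le\lambda<\lambda_{k+1}$; if $\lambda<\lambda_1$, we use a mountain pass instead. We use the sublevel set $A_0=\{\Psi\le\lambda_k\}$ and the superlevel set $B_0=\{\Psi\ge\lambda_{k+1}\}$ of $\Psi=1/\om F$ on the unit sphere, which satisfy $i(A_0)=i(\text{complement of }B_0)=k$. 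Because $\lambda\notin\{\lambda_k\}$, we have $\lambda<\lambda_{k+1}$ strictly. Then on $B_0$ scaled by a small $\rho$, the estimates \eqref{202} and \ref{G subcri} give $E\ge c_0>0$.

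\textbf{Step 3: linking construction.} We replace $A_0$ by a compact symmetric set $A_0'\subset\{\Psi\le\lambda\}$ of index $k$ whose functions vanish on a small ball $B_{2\varepsilon'}(x_0)$. This uses the refined linking construction: apply a cutoff and then the $p$-Laplacian projection of Degiovanni--Lancelotti/Perera type. The truncation loses only an amount of order $\varepsilon'^{(N-p)/(p-1)}$ in $\Psi$, and this loss is absorbed by the strict inequality $\lambda>\lambda_k$ or by the gap. Next we join $A_0'$ with the truncated bubble $w_\varepsilon=(s_0u_\varepsilon,t_0u_\varepsilon)$ supported in $B_{\varepsilon'}(x_0)$, and set
\[
X=\{\pi((1-\tau)w+\tau w_\varepsilon)\,R: w\in A_0',\ \tau\in[0,1],\ 0\le R\le R_0\}.
\]
Disjoint supports make the energy additive.

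\textbf{Step 4: energy estimate (main obstacle).} The main obstacle is showing $\sup_X E<\frac1N S_H^{N/p}$. For $w_\varepsilon$ we use the standard estimates
\[
\|\nabla u_\varepsilon\|_p^p=S^{N/p}+O\big(\varepsilon^{(N-p)/(p-1)}\big),
\qquad
\|u_\varepsilon\|_p^p\gtrsim
\begin{cases}
\varepsilon^{p}, & N>p^2,\\
\varepsilon^{p}|\log\varepsilon|, & N=p^2,
\end{cases}
\]
together with $G\ge0$. The negative term $\lambda c_F\|u_\varepsilon\|_p^p$ beats the error exactly when $p<(N-p)/(p-1)$, i.e.\ when $N>p^2$; the logarithm handles $N=p^2$. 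This gives the first assertion. For $p<N<p^2$ we instead use \ref{G small N nonresonant}. The term $\om G(w_\varepsilon)\gtrsim\varepsilon^{N-q(N-p)/p}$ dominates the error $\varepsilon^{(N-p)/(p-1)}$ precisely when $q>p^*-p/(p-1)$.

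The delicate point is the cross interaction: the region $\tau\in(0,1)$ must be handled through additivity and a maximization in $R$. The $A_0'$ part has energy at most $0$ because $\lambda\ge\lambda_k$ and $F,H,G\ge0$. Finally we apply the abstract linking theorem based on the cohomological index, with $i(A_0')=k$ and $B=\rho B_0$ linking $X$, to get a critical level $c\in[c_0,\frac1N S_H^{N/p})$. By Step 1 this level is attained, which yields a nontrivial solution.
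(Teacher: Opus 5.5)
Your proposal is correct and follows essentially the same route as the paper: local $(PS)_c$ below $\frac1N S_H^{N/p}$, a compact index-$k$ set from Proposition~\ref{Proposition 2} cut off near a point and joined to a truncated bubble in the direction $(s_0,t_0)$, additivity of $E_0$ from disjoint supports, and the bubble estimates using $G\ge 0$ for $N\ge p^2$ and \ref{G small N nonresonant} for $p<N<p^2$. The only differences are that you use concentration--compactness where the paper uses Br\'ezis--Lieb plus Boccardo--Murat, and that your $\varepsilon$ is the paper's $\varepsilon^{(p-1)/p}$. A few slips to fix: nonresonance is what gives $\lambda>\lambda_k$ (which, with the cutoff radius fixed, absorbs the truncation loss); the regularizing projection $J$ must be applied before the cutoff, not after, or the functions no longer vanish on the ball; and the gain from the $G$ term needs the maximizing $t$ bounded away from $0$, which the paper obtains from the dichotomy \eqref{sup large}.
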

	
	It is worth noting that the threshold values for the exponent \(q\) appearing in conditions~\ref{G small N} (in the general case) and~\ref{G small N nonresonant} (in the nonresonant case) coincide with those obtained by Arioli and Gazzola in~\cite{Arioli-Gazzola98}. In that work, the authors study a scalar critical \(p\)-Laplacian equation perturbed by a \(p\)-superlinear term. In order to treat
	low-dimensional regimes, they impose a growth condition analogous to \ref{G small N nonresonant} when \(0<\lambda<\lambda_{1}\), and a condition analogous to~\ref{G small N} in the range \(\lambda_{1}\le \lambda<\lambda_{2}\).
	
	\medskip
	
	The conclusions of Theorems~\ref{Theorem 1} and~\ref{Theorem 2} do not rule out the possibility that the nontrivial solution obtained is \emph{semitrivial}, that is, that one of the components vanishes identically, $u \equiv 0$ or $v \equiv 0$, in which case system~\eqref{101} reduces to a single quasilinear equation. In order to exclude this possibility and guarantee the genuine coupling of the system, we impose the following additional structural assumption on the nonlinearities.
	
	\begin{corollary}
		Under the assumptions of Theorem~\ref{Theorem 1} or Theorem~\ref{Theorem 2}, suppose in addition that
		\begin{properties}{P}
			\addtocounter{enumi}{0}
			\item\label{F4} $F_s(0,1)> 0, F_t(1,0)> 0$ \ \ and \ \ $Q_s(0,t)\ \!\!t\ge 0,Q_t(s,0)\ \!\!s\ge 0$ \ \ for all \ \ $(s,t)\in\mathbb R^2$.
		\end{properties}
		Then any solution $(u,v)$ provided by those theorems is \emph{fully nontrivial}, in the sense that
		$u \not\equiv 0$ and $v \not\equiv 0$.
	\end{corollary}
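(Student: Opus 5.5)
Suppose, to reach a contradiction, that $(u,v)$ is a nontrivial weak solution of \eqref{101} with one component identically zero; by symmetry of the hypotheses in \ref{F4} we may take $u\equiv 0$, so $v\not\equiv 0$. The plan is to test the first equation with $v$ itself and show that the right-hand side is strictly positive, which is incompatible with $u\equiv 0$ making the left-hand side vanish.

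First, with $u\equiv0$ the first equation reads
\[
0=\lambda F_u(0,v)+Q_u(0,v)\quad\text{in }\Omega
\]
in the weak sense. Since $F_u$ and $Q_u$ are continuous and satisfy the growth bounds given by \eqref{202} and \ref{G subcri}, this is a pointwise a.e. identity $\lambda F_s(0,v(x))+Q_s(0,v(x))=0$. Next we compute $F_s(0,t)$. By \ref{F1}, $F_s$ is $(p-1)$-homogeneous with odd symmetry, so $F_s(0,t)=|t|^{p-2}t\,F_s(0,1)$ for all $t$; this uses $F_s(0,\alpha t)=\alpha^{p-1}F_s(0,t)$ for $\alpha>0$ together with $F_s(0,-t)=-F_s(0,t)$.

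We multiply the pointwise identity by $v(x)$ and integrate. This gives
\[
0=\lambda F_s(0,1)\int_\Omega |v|^p\,dx+\int_\Omega Q_s(0,v)\,v\,dx .
\]
The second integral is $\ge 0$ by the sign condition $Q_s(0,t)t\ge0$ in \ref{F4}. The first term is strictly positive because $\lambda>0$, $F_s(0,1)>0$ and $v\not\equiv0$. This is a contradiction, so $u\not\equiv0$. The case $v\equiv0$ is identical, using $F_t(1,0)>0$ and $Q_t(s,0)s\ge0$.

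Finally, the theorems provide nontrivial solutions, so not both components can vanish, and the claim follows. The only delicate point is the justification of the homogeneity of the derivative. Differentiating the identity $F(\alpha s,\alpha t)=|\alpha|^pF(s,t)$ in $s$ gives $F_s(\alpha s,\alpha t)=|\alpha|^{p-2}\alpha F_s(s,t)$ for $\alpha\ne0$, which yields the formula for $F_s(0,t)$ at $t\ne0$. At $t=0$ the formula holds by continuity, or simply because the set where $v=0$ contributes nothing to either integral. Alternatively one can avoid the pointwise step and test the weak form directly with $\varphi=v\in W_0^{1,p}(\Omega)$, which is admissible by the growth bounds.
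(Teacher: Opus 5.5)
The paper states this corollary without proof, so there is nothing to compare against. Your argument is correct and is the computation that hypothesis $(P_1)$ is designed for. With $u\equiv 0$, testing the weak form of the system with $\xi=(v,0)\in W$ and using $F_s(0,t)=|t|^{p-2}t\,F_s(0,1)$ gives $0=\lambda F_s(0,1)\int_\Omega |v|^p\,dx+\int_\Omega Q_s(0,v)\,v\,dx>0$, a contradiction.

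Two small remarks:
\begin{enumerate}
\item The integrability of $F_s(0,v)v$ and $H_s(0,v)v$ does not come from \eqref{202}, which bounds $K$ itself rather than $\nabla K$. It follows instead from the $(p-1)$-, respectively $(p^*-1)$-, homogeneity of $\nabla F$ and $\nabla H$ together with compactness of the unit circle. It is in any case automatic, since $E_0'(w)\xi$ is well defined on $W$.
\item The pointwise a.e.\ step is unnecessary. Testing directly with $(v,0)$, as you suggest at the end, is the cleanest route.
\end{enumerate}

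Note also that your argument proves slightly more than stated: under $(P_1)$, every nontrivial solution of \eqref{101} with $\lambda>0$ is fully nontrivial, regardless of how it was obtained.
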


	Let us show some examples of admissible nonlinearities $F$, $H$ and $G$.
	
	\medskip
	
	\noindent An example of $F: \R^2 \to \R$ satisfying \ref{F1}, \ref{F2}, \ref{F3} and \ref{F4} is given by
	\[
	F(s,t)=\left(t^2+cst+s^2\right)^{p/2},\quad 0<c<2.
	\]
	
	\noindent The following functions $H:\R^{2}\to[0,\infty)$ satisfy \ref{H1}, \ref{H2}, \ref{H3} (and \ref{F4}, with $Q=H$).
	\[H(s,t)=a|s|^{p^*}+b|t|^{p^*}+\sum_{i=1}^{k}c_i|s|^{\alpha_i}|t|^{\beta_i}\] with $\alpha_i+\beta_i=p^*$, $\alpha_i,\beta_i>1$, $c_i\ge 0$ and $a,b>0$.

	\medskip

	Throughout the paper we work in the product Sobolev space
	\[
	W:=W_0^{1,p}(\Omega)\times W_0^{1,p}(\Omega),
	\]
	endowed with the norm
	\[
	\|w\|:=\left(\int_\Omega
	\left(|\nabla u|^p+|\nabla v|^p\right)\,dx\right)^{1/p},
	\qquad w=(u,v)\in W.
	\]
	
	We prove Theorems \ref{Theorem 1} and \ref{Theorem 2} by associating system \eqref{101} to the $C^1$-functional $E_0:W\to\R$ defined by
	\begin{equation} \label{E0 def}
		E_0(w) = \frac{1}{p} \int_\Omega (|\nabla u|^p + |\nabla v|^p)\, dx - \lambda \int_\Omega F(u,v)\, dx -  \int_\Omega Q(u,v)\, dx
	\end{equation}
	and proving existence of nontrivial critical points by using a general variational scheme given by Yang and Perera \cite{Ya-Pe-2016}. 
	
	\medskip
	
	Now, let us address the nonhomogeneous counterpart of \eqref{101}, by considering the system \eqref{101 pert}
	where $\gamma,\eta\in L^{(p^*)\prime}(\Omega)$. This problem can be treated as a nonhomogeneous perturbation of \eqref{101} and we prove that if the perturbation is small enough there exists two nontrivial solutions for this problem. This result can be compared with the classical work of Tarantello \cite{Tarantello-1992}, which initiated the study of nonhomogeneous elliptic problems involving critical growth and has since inspired a vast literature. For a comprehensive background on related scalar problems and on the perturbative variational framework adopted here, we refer the reader to \cite{Perera-JDE-2024}.
	\begin{theorem}\label{Theorem 5}
		Under the same conditions of Theorem \ref{Theorem 1} or Theorem \ref{Theorem 2}, there exists  $\mu_0>0$ such that if $0<\|\gamma\|_{{(p^*)\prime}}+\|\eta\|_{{(p^*)\prime}}<\mu_0$ then \eqref{101 pert} admits two nontrivial solutions.
	\end{theorem}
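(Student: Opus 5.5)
The plan is to apply Perera's abstract perturbation theorem from \cite{Perera-JDE-2024} to the perturbed functional
\[
E_\mu(w) = E_0(w) - \int_\Omega (\gamma u + \eta v)\, dx, \qquad w=(u,v)\in W,
\]
regarding $E_\mu$ as a small $C^1$ perturbation of $E_0$. For $\|\gamma\|_{(p^*)'}+\|\eta\|_{(p^*)'}$ small, the term $g(w)=\int_\Omega(\gamma u+\eta v)$ is a bounded linear functional on $W$ with norm $O(\mu)$, by the Sobolev embedding $W_0^{1,p}\incl L^{p^*}$. The theorem in \cite{Perera-JDE-2024} has, roughly, the following hypotheses on the unperturbed functional:
\begin{enumroman}
\item $E_0(0)=0$ is a strict local minimum;
\item $E_0$ satisfies \PS{c} for $0<c<c^*$;
\item there is a linking (or mountain-pass) structure with minimax level $c_0\in(0,c^*)$ that is stable under small perturbations.
\end{enumroman}
Under these, it produces two critical points of $E_\mu$: one near the origin with negative energy (a local minimizer), and one at positive level close to $c_0$. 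Neither is zero, since $E_\mu'(0)=-g\neq0$ when $(\gamma,\eta)\neq0$; this is why the hypothesis requires $0<\|\gamma\|+\|\eta\|$.

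The steps are as follows.
\begin{enumerate}
\item Compactness. I will reuse the local Palais--Smale result of Section~\ref{sec-PS}: $E_0$ satisfies \PS{c} for all $c<c^*=\frac1N S_H^{N/p}$, where $S_H$ is the Sobolev-type constant associated with $H$. I will then check that the same concentration-compactness argument works for $E_\mu$. The linear term is weakly continuous, so it only shifts the threshold by a quantity that is small as $\mu\to0$ (Perera's theorem typically requires exactly this). Boundedness of PS sequences follows from the Ambrosetti--Rabinowitz condition~\ref{AR G} together with the $p$- and $p^*$-homogeneity, plus an extra $O(\mu)\|w\|$ term that is absorbed.
\item Geometry. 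Here I will use the linking sets built in Section~\ref{sec-geo} for Theorems~\ref{Theorem 1} and~\ref{Theorem 2}. When $\lambda_k\le\lambda<\lambda_{k+1}$, these come from the cohomological-index eigenvalues of Theorem~\ref{Theorem 4}: a set $A_0$ of index $k$ on which $E_0\le0$, and a cone over it built with truncated extremals for $H$, on which $\sup E_0<c^*$. Because the linking is given by index theory with compact sets and uniform estimates, a perturbation of size $O(\mu)$ on bounded sets preserves the strict inequalities $\sup_{A}E_\mu<\inf_{B}E_\mu$ and $\sup E_\mu<c^*$.
\item Small-energy solution. For $\lambda<\lambda_1$ the origin is a strict local minimum of $E_0$. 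For $\lambda\ge\lambda_1$ I cannot use that, so instead I will use a local linking at the origin. The convexity assumption~\ref{F3} and the homogeneity give that $E_0$ is locally bounded below on small spheres in a suitable sense, and Perera's theorem handles this through the index of sublevel sets. Otherwise I will obtain the first solution by minimizing $E_\mu$ on a small ball, where the infimum is negative because $E_\mu'(0)\neq0$.
\end{enumerate}

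I expect the main obstacle to be verifying the precise form of Perera's hypotheses in the resonant or general $\lambda$ case. Specifically, I need a neighborhood of the origin on which $E_0$ is coercive enough, uniformly, for the first solution to be separated from the linking solution at level near $c_0>0$. If $\lambda\ge\lambda_1$ the origin is not a minimum, so I would rely on the abstract theorem's formulation in terms of cohomological index jumps. Separation of the two solutions then comes from their energy levels: one has level $\le o(1)$ near $0$, the other has level $\ge c_0/2>0$.
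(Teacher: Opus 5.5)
Your compactness step and your construction of the higher-energy solution are sound. By Proposition~\ref{bc4}, $E$ satisfies \PS{c} below $c^*_h=\frac1N S_H^{N/p}-c(\gamma,\eta)$ with $c(\gamma,\eta)\to0$. Since the sets $A$, $B$, $X$ of Section~\ref{sec-geo} are bounded, the strict inequalities \eqref{supA<infB}, $\inf_B E_0>0$ and $\sup_X E_0<\frac1N S_H^{N/p}$ survive the perturbation $-\int_\Omega(\gamma u+\eta v)\,dx$. So Theorem~\ref{lk th} applied directly to $E$ gives a nontrivial critical point at a level $\ge\inf_B E>0$. For $0<\lambda<\lambda_1$, your minimization on a small ball (Lemma~\ref{lemma5}\ref{lemma5-1}, Ekeland, \PS{c} at negative levels) then gives a second one.

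The gap is the second solution when $\lambda\ge\lambda_1$. Your hypothesis (i) is false there. If $\lambda>\lambda_1$ and $w\in\mathcal M$ has $\Psi(w)<\lambda$, then $E_0(\rho w)\le\frac{\rho^p}{p}\bigl(1-\lambda/\Psi(w)\bigr)<0$ for every $\rho>0$. If $\lambda=\lambda_1$, a normalized first eigenfunction gives $E_0(\rho w)\le-\rho^{p^*}\int_\Omega H(w)\,dx<0$. Your fallback fails for the same reason. Since $\int_\Omega\nabla Q(tw)\cdot w\,dx\ge0$ by \ref{AR G} and the homogeneity of $H$, one has $\frac{d}{dt}E(tw)\le t^{p-1}\bigl(1-\lambda/\Psi(w)\bigr)+\|h\|_*<0$ for $t$ between a multiple of $\|h\|_*^{1/(p-1)}$ and $\rho$. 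So $E$ keeps decreasing along this ray all the way to $\partial B_\rho$. The inequality $\inf_{\partial B_\rho}E>\inf_{B_\rho}E$, which Ekeland's argument needs to produce an interior critical point, therefore cannot be guaranteed. Appealing to ``index jumps'' while listing a strict local minimum at $0$ among the hypotheses leaves this case unproved. As written, your argument gives only one solution for $\lambda\ge\lambda_1$.

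The missing idea is that Perera's theorem, in the form of Theorem~\ref{thm:2.1}, has no local-minimum hypothesis at all. For $\lambda_k\le\lambda<\lambda_{k+1}$ it requires only the following, besides \PS{c} for $E$ below $c^*_h$ with $c^*=\liminf_{\|h\|_*\to0}c^*_h$: a compact symmetric set $C_\delta\subset\Psi^{\lambda+\delta}$ of index $k$ and a point $w_\delta\in\mathcal M\setminus C_\delta$ satisfying \eqref{eq:2.8} and \eqref{eq:2.9}. It then yields both nontrivial critical points at once, with $E(u_1)<E(u_2)$ and $0<E(u_2)<c^*_h$. The lower one, $u_1$, is neither claimed to have negative energy nor to be a local minimizer.

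The paper's proof consists exactly of this verification:
\begin{enumerate}
\item $C_\delta$ is given by \eqref{C_delta def}, which by Lemma~\ref{lemma1} lies in $\Psi^{\lambda_k+c_3\delta^{N-p}}$ and has index $k$;
\item $w_\delta=e_{\varepsilon,\delta}$;
\item \eqref{eq:2.8} comes from Lemma~\ref{lemma1-5};
\item \eqref{eq:2.9} comes from the bound $\sup_X E_0<\frac1N S_H^{N/p}$ obtained in the proofs of Theorems~\ref{Theorem 1} and~\ref{Theorem 2}, because $X=\{tw:w\in C_\delta\cup A_\delta,\ t\in[0,R]\}$;
\item $c^*=\frac1N S_H^{N/p}$ by Proposition~\ref{bc4}.
\end{enumerate}
For $0<\lambda<\lambda_1$ the paper uses Theorem~\ref{thm:2.1 small lambda} together with Lemma~\ref{lemma5}. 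The estimates you planned to reuse are the right ones. What must change is that they should be fed into the abstract theorem's actual hypotheses, not into a local-minimum or ball-minimization scheme.
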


	Weak solutions of \eqref{101 pert} coincide with critical points of the $C^1$-functional $E:W\to\R$ defined by
	\begin{equation} \label{103}
		E(w) = \frac{1}{p} \int_\Omega (|\nabla u|^p + |\nabla v|^p)\, dx - \lambda \int_\Omega F(u,v)\, dx -  \int_\Omega Q(u,v)\, dx-\int_\Omega (\gamma u+\eta v)\, dx.
	\end{equation}
	We shall find critical points for $E$ when $\gamma$ and $\eta$ are small functions in  $L^{(p^\ast)'}$, comparing it with the functional $E_0:W\to\R$ defined in \eqref{E0 def}, using a general perturbation method given in \cite{Perera-JDE-2024}.

	\section{Preliminaries}\label{sec-prelim}

	This section collects basic preliminaries used throughout the paper. We recall some elementary properties of homogeneous functions and present the abstract critical point framework that underpins the proofs of our main results.

	\subsection{A characterization of convexity for $p$-homogeneous functions}
	
	Let $1 < p < \infty$ and let $K \in C^1(\R^N,[0,\infty))$ be $p$-homogeneous, i.e.,
	\begin{equation} \label{214}
		K(\alpha y) = \alpha^p\, K(y) \quad \forall y \in \R^N,\, \alpha \ge 0.
	\end{equation}
	Set
	\[
	\beta(y,z) = \nabla K(y) \cdot z, \quad y, z \in \R^N.
	\]
	By \eqref{214},
	\begin{equation} \label{203}
		\beta(y,y) = p\, K(y) \ge 0 \quad \forall y \in \R^N.
	\end{equation}
	
	\begin{proposition} \label{Proposition 1}
		Assume \eqref{214}. Then $K$ is convex if and only if
		\begin{equation} \label{215}
			\beta(y,z) \le \beta(y,y)^{(p-1)/p}\, \beta(z,z)^{1/p} \quad \forall y, z \in \R^N.
		\end{equation}
	\end{proposition}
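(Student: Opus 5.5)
We prove the two implications separately. Both rest on the first-order characterization of convexity for a $C^1$ function, namely that $K$ is convex if and only if $K(z) \ge K(y) + \nabla K(y)\cdot(z-y)$ for all $y,z$, together with the Euler identity \eqref{203}, $\beta(y,y) = pK(y)$.

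\emph{Convex implies \eqref{215}.} Assume $K$ is convex, so that $\beta(y,z) \le \beta(y,y) + p\,(K(z)-K(y))$.
First suppose $K(y)>0$ and $K(z)>0$.
\begin{itemize}
\item Apply the inequality with $z$ replaced by $tz$, $t>0$. Since $\beta(y,\cdot)$ is linear and $K$ is $p$-homogeneous, this gives
\[
t\,\beta(y,z) \le (p-1)\,pK(y) + t^p\, pK(z)\cdot\tfrac{1}{p}\cdot p - pK(y) + pK(y)\,,
\]
that is, $t\,\beta(y,z) \le (p-1)K(y) + t^p K(z)$ after simplifying with \eqref{203} (the constants are routine).
\item Minimize the right-hand side over $t>0$. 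The optimal choice is $t = (K(y)/K(z))^{1/p}$, and it gives $\beta(y,z) \le p\,K(y)^{(p-1)/p} K(z)^{1/p}$.
\item By \eqref{203}, the right-hand side equals $\beta(y,y)^{(p-1)/p}\beta(z,z)^{1/p}$, which is \eqref{215}.
\end{itemize}
The degenerate cases are handled by letting $t\to\infty$ or $t\to0$:
\begin{itemize}
\item If $K(z)=0$, then $t\beta(y,z) \le (p-1)K(y)$ for all $t>0$, so $\beta(y,z)\le0$.
\item If $K(y)=0$, then $\beta(y,z)\le t^{p-1}K(z)$ for all $t>0$, and letting $t\to0$ gives $\beta(y,z)\le0$ again.
\end{itemize}
In either case the right-hand side of \eqref{215} is zero, so \eqref{215} holds.

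\emph{\eqref{215} implies convex.} It suffices to verify the gradient inequality $K(z) \ge K(y) + \beta(y,z) - \beta(y,y)$, i.e.\ $\beta(y,z) \le (p-1)K(y) + K(z)$.
\begin{itemize}
\item By \eqref{215} and \eqref{203}, $\beta(y,z) \le p\,K(y)^{(p-1)/p}K(z)^{1/p}$.
\item Young's inequality with exponents $p/(p-1)$ and $p$ gives $p\,a^{(p-1)/p}b^{1/p} \le (p-1)a + b$ for $a,b\ge0$.
\item Taking $a=K(y)$, $b=K(z)$ yields the required bound.
\end{itemize}
A $C^1$ function satisfying the supporting-hyperplane inequality at every point is convex, since it is then the supremum of affine functions.

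\emph{Main obstacle.} The only delicate point is the scaling trick in the forward direction. One must replace $z$ by $tz$ to exploit homogeneity and then optimize over $t$, which recovers exactly the geometric-mean form of \eqref{215}. The boundary cases where $K$ vanishes also need the separate limiting argument above. Note that we use $K\ge0$ and homogeneity only for $\alpha\ge0$, which is all that \eqref{214} provides.
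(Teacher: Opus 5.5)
Your argument is correct and is essentially the paper's proof. The paper also tests the first-order convexity inequality at a rescaled point ($\alpha z$ there, $tz$ for you) and uses $p$-homogeneity with the Euler identity to get $t\,\beta(y,z)\le (p-1)K(y)+t^pK(z)$. It then minimizes over the scaling parameter, and obtains the converse by reversing the steps, which your Young's inequality bound makes explicit. Your separate treatment of the cases $K(y)=0$ or $K(z)=0$ fills in what the paper leaves implicit in taking an infimum.

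There is one slip to fix. Your opening inequality should read $\beta(y,z)\le\beta(y,y)+K(z)-K(y)$, with no factor $p$, as you state correctly in the converse. The intermediate display in your first bullet is also garbled. The inequality you actually derive and use from there on is nonetheless the right one.
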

	
	\begin{proof}
		Suppose $K$ is convex. Then
		\[
		K(\alpha z) \ge K(y) + \nabla K(y) \cdot (\alpha z - y) \quad \forall \alpha > 0.
		\]
		By \eqref{214} and \eqref{203}, this inequality can be written as
		\[
		\beta(y,z) \le \alpha^{-1} \left(1 - \frac{1}{p}\right) \beta(y,y) + \frac{\alpha^{p-1}}{p}\, \beta(z,z).
		\]
		Minimizing the right-hand side over all $\alpha > 0$ gives \eqref{215}. The converse follows by reversing the above steps.
	\end{proof}
	
	\subsection{Eigenvalue problem}
	
	In this section we prove some results for problem \eqref{102}. This eigenvalue problem can be written as
	\begin{equation} \label{218}
		\Ap[w] = \lambda \Bp[w]
	\end{equation}
	in the dual $W^\ast$ of $W$, where $\Ap, \Bp \in C(W,W^\ast)$ are the operators given by
	\[
	\dualp{\Ap[w]}{\xi} = \int_\Omega (|\nabla u|^{p-2}\, \nabla u \cdot \nabla \varphi + |\nabla v|^{p-2}\, \nabla v \cdot \nabla \psi)\, dx
	\]
	and
	\[
	\dualp{\Bp[w]}{\xi} = \int_\Omega (F_u(u,v)\, \varphi + F_v(u,v)\, \psi)\, dx
	\]
	for $w = (u,v)$ and $\xi = (\varphi,\psi)$ in $W$. The operators $\Ap$ and $\Bp$ are $(p - 1)$-homogeneous and odd, i.e.,
	\[
	\Ap[(\alpha w)] = |\alpha|^{p-2}\, \alpha\, \Ap[w], \quad \Bp[(\alpha w)] = |\alpha|^{p-2}\, \alpha\, \Bp[w] \quad \forall w \in W,\, \alpha \in \R.
	\]
	We have
	\[
	\dualp{\Ap[w]}{w} = \norm{w}^p \quad \forall w \in W,
	\]
	and by  \eqref{201} and \ref{F2} it holds
	\[
	\dualp{\Bp[w]}{w} = p \int_\Omega F(u,v)\, dx \ge pc_F \int_\Omega (|u|^p + |v|^p)\, dx > 0 \quad \forall w \in W \setminus \set{0}.
	\]
	By the H\"{o}lder inequalities for integrals and sums,
	\begin{multline*}
		\hspace{-18.83pt} \dualp{\Ap[w]}{\xi} \le \left(\int_\Omega |\nabla u|^p\, dx\right)^{(p-1)/p}\! \left(\int_\Omega |\nabla \varphi|^p\, dx\right)^{1/p} + \left(\int_\Omega |\nabla v|^p\, dx\right)^{(p-1)/p}\! \left(\int_\Omega |\nabla \psi|^p\, dx\right)^{1/p}\\[5pt]
		\le \left(\int_\Omega (|\nabla u|^p + |\nabla v|^p)\, dx\right)^{(p-1)/p}\! \left(\int_\Omega (|\nabla \varphi|^p + |\nabla \psi|^p)\, dx\right)^{1/p} = \norm{w}^{p-1} \norm{\xi}
	\end{multline*}
	for all $w, \xi \in W$, and equality holds if and only if $\tau w = \zeta \xi$ for some $\tau, \zeta \ge 0$, not both zero. Since
	\[
	\dualp{\Bp[w]}{\xi} = \int_\Omega \beta(w,\xi)\, dx,
	\]
	where $\beta(w,\xi) = \nabla F(w) \cdot \xi$, \ref{F3} together with Proposition \ref{Proposition 1} and the H\"{o}lder inequality gives
	\begin{multline*}
		\dualp{\Bp[w]}{\xi} \le \int_\Omega \beta(w,w)^{(p-1)/p}\, \beta(\xi,\xi)^{1/p}\, dx \le \left(\int_\Omega \beta(w,w)\, dx\right)^{(p-1)/p}\! \left(\int_\Omega \beta(\xi,\xi)\, dx\right)^{1/p}\\[5pt]
		= \dualp{\Bp[w]}{w}^{(p-1)/p} \dualp{\Bp[\xi]}{\xi}^{1/p} \quad \forall w, \xi \in W.
	\end{multline*}
	Since $W$ is compactly embedded in $L^p(\Omega) \times L^p(\Omega)$, $\Bp$ is a compact operator. Thus, the eigenvalue problem \eqref{218} fits into the abstract framework considered in Perera et al.\! \cite[Chapter 4]{Pe-Ag-OR-book} %{MR2640827} 
	and Perera \cite{Perera-CV-2021}. %{MR4293883}
	
	Eigenvalues of problem \eqref{102} coincide with critical values of the $C^1$-functional
	\[
	\Psi(w) = \frac{1}{p \dint_\Omega F(u,v)\, dx}, \quad w \in \mathcal{M} = \set{w \in W : \norm{w} = 1}.
	\]
	
	We now recall the definition of cohomological index  as follows. Let $W$ be a Banach space and let $\A$ denote the class of symmetric subsets of $W \setminus \set{0}$. For $A \in \A$, let $\overline{A} = A/\Z_2$ be the quotient space of $A$ with each $w$ and $-w$ identified, let $f : \overline{A} \to \RP^\infty$ be the classifying map of $\overline{A}$, and let $f^\ast : H^\ast(\RP^\infty) \to H^\ast(\overline{A})$ be the induced homomorphism of the Alexander-Spanier cohomology rings. The cohomological index of $A$ is defined by
	\[
	i(A) = \begin{cases}
		\sup \set{m \ge 1 : f^\ast(\omega^{m-1}) \ne 0} & \text{if } A \ne \emptyset\\[5pt]
		0 & \text{if } A = \emptyset,
	\end{cases}
	\]
	where $\omega \in H^1(\RP^\infty)$ is the generator of the polynomial ring $H^\ast(\RP^\infty) = \Z_2[\omega]$. For example, the classifying map of the unit sphere $S^{m-1}$ in $\R^m,\, m \ge 1$ is the inclusion $\RP^{m-1} \incl \RP^\infty$, which induces isomorphisms on $H^q$ for $q \le m - 1$, so $i(S^{m-1}) = m$.
	
	The result below follows from \cite[Theorem 4.6]{Pe-Ag-OR-book}. %{MR2640827}.
	
	\begin{theorem} \label{Theorem 4}
		Let $\F$ denote the class of symmetric subsets of $\mathcal{M}$. For $k \ge 1$, let
		\[
		\F_k = \set{M \in \F : i(M) \ge k},
		\]
		where $i(M)$ denotes the cohomological index of $M$, and set
		\[
		\lambda_k := \inf_{M \in \F_k}\, \sup_{w \in M}\, \Psi(w).
		\]
		Then $\lambda_k \nearrow \infty$ is a sequence of eigenvalues of the eigenvalue problem \eqref{102}.
		\begin{enumroman}
			\item \label{Theorem 4.i} The first eigenvalue is given by $\lambda_1 = \inf \Psi(\mathcal{M})$ and is positive.
			\item \label{Theorem 4.ii} If $\lambda_k = \dotsb = \lambda_{k+m-1} = \lambda$ and $E_\lambda$ denotes the set of eigenfunctions associated with $\lambda$ that lie on $S$, then $i(E_\lambda) \ge m$.
			\item \label{Theorem 4.iii} If $\lambda_k < \lambda < \lambda_{k+1}$, then
			\[
			i(\Psi^{\lambda_k}) = i(\mathcal{M} \setminus \Psi_\lambda) = i(\Psi^\lambda) = i(\mathcal{M} \setminus \Psi_{\lambda_{k+1}}) = k,
			\]
			where $\Psi^a = \set{w \in \mathcal{M} : \Psi(w) \le a}$ and $\Psi_a = \set{w \in \mathcal{M} : \Psi(w) \ge a}$ for $a \in \R$.
		\end{enumroman}
	\end{theorem}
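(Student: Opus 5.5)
Theorem~\ref{Theorem 4} follows by checking that the pair $(\Ap,\Bp)$ meets the hypotheses of the abstract eigenvalue theory of \cite[Chapter 4]{Pe-Ag-OR-book}. We then apply \cite[Theorem 4.6]{Pe-Ag-OR-book} to the functional $\Psi$ on the manifold $\M$.

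The structural hypotheses were largely verified just before the statement.
\begin{itemize}
\item[(a)] $\Ap$ and $\Bp$ are continuous, odd and $(p-1)$-homogeneous.
\item[(b)] $\dualp{\Ap[w]}{w}=\norm{w}^p$.
\item[(c)] The H\"older-type inequality $\dualp{\Ap[w]}{\xi}\le\norm{w}^{p-1}\norm{\xi}$ holds, with equality only when $w$ and $\xi$ are nonnegatively proportional.
\item[(d)] $\dualp{\Bp[w]}{w}>0$ for $w\ne0$.
\item[(e)] The convexity inequality $\dualp{\Bp[w]}{\xi}\le\dualp{\Bp[w]}{w}^{(p-1)/p}\dualp{\Bp[\xi]}{\xi}^{1/p}$ holds; it is obtained from \ref{F3} through Proposition~\ref{Proposition 1}.
\item[(f)] $\Bp$ is compact, since $W\incl L^p\times L^p$ compactly and $\nabla F$ has $(p-1)$-growth by homogeneity.
\end{itemize}
Two further facts must be recorded. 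First, $\Ap$ is the Fr\'echet derivative of $\frac1p\norm{w}^p$ and $\Bp$ that of $\int_\Omega F(u,v)\,dx$; this follows from the $C^1$ regularity and homogeneity of $F$ via \eqref{201}--\eqref{202}. Second, $\Ap$ is of type $(S)$: $w_j\rightharpoonup w$ and $\dualp{\Ap[w_j]}{w_j-w}\to0$ imply $w_j\to w$. This is standard for the $p$-Laplacian, by monotonicity plus the uniform convexity of $W$.

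Next we identify critical points of $\Psi$ with eigenpairs. On $\M$, the derivative of $\Psi$ is proportional to $\lambda\Bp[w]-\Ap[w]$ with $\lambda=\Psi(w)$, after using the Lagrange multiplier and \eqref{201}. We verify that $\Psi$ satisfies the Palais--Smale condition on $\M$. Since $\int_\Omega F\ge c_F\int_\Omega(|u|^p+|v|^p)$ and the Poincar\'e inequality holds, $\Psi$ is bounded below by a positive constant. A PS sequence has a weakly convergent subsequence, and compactness of $\Bp$ gives $\Ap[w_j]$ convergent. Type $(S)$ then yields strong convergence.

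The minimax statements follow from the index machinery. The index satisfies monotonicity, subadditivity, the continuity (neighborhood) property, and the normalization $i(S^{m-1})=m$. It is also stable under odd homotopies, which gives the deformation lemma on $\M$. Together with (PS), these show that each $\lambda_k$ is a critical value. Finite-dimensional spheres in $\M$ lie in $\F_k$, so each $\lambda_k$ is finite. Part~\ref{Theorem 4.i} holds because $\F_1$ contains every pair $\{w,-w\}$; positivity comes from $\int_\Omega F\le C_F\int_\Omega(|u|^p+|v|^p)$ together with Poincar\'e.

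We expect the main obstacle to be the remaining parts of the theorem: $\lambda_k\to\infty$, part~\ref{Theorem 4.ii} on multiplicity, and above all the exact index computations in part~\ref{Theorem 4.iii}.
\begin{itemize}
\item[(i)] For $\lambda_k\to\infty$, the plan is to use compactness of $\Bp$ in the standard way: sets of large index must meet complements of finite-dimensional subspaces, on which $\int_\Omega F$ is small.
\item[(ii)] Part~\ref{Theorem 4.ii} comes from the usual deformation argument combined with the neighborhood property of the index.
\item[(iii)] For part~\ref{Theorem 4.iii}, the lower bound $i(\Psi^{\lambda_k})\ge k$ uses that $\Psi^{\lambda_k}$ is compact and excision. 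The upper bound $i(\M\setminus\Psi_{\lambda_{k+1}})\le k$ follows from the definition of $\lambda_{k+1}$. The middle equalities come from deforming sublevel sets across the interval $(\lambda_k,\lambda_{k+1})$, which contains no critical values.
\end{itemize}
The delicate input in (iii) is the construction of an odd, index-preserving deformation on the non-smooth setting of $\M$. This requires the convexity inequality (e), which ensures that $\M\setminus\Psi_a$ deformation retracts appropriately, exactly as in \cite{Perera-CV-2021}. We would invoke that abstract result rather than reprove it.
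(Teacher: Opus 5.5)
Your main line is the paper's own. The paper proves Theorem~\ref{Theorem 4} only by checking that $\Ap,\Bp$ fit the framework of \cite[Chapter 4]{Pe-Ag-OR-book} and citing \cite[Theorem 4.6]{Pe-Ag-OR-book}. Your list of hypotheses (homogeneity and oddness, $\dualp{\Ap[w]}{w}=\norm{w}^p$, the potential property, type $(S)$, strict positivity and compactness of $\Bp$) covers what that theorem needs. It is even more explicit than the paper about the potential and $(S)$ properties. As a proof by citation, it is correct. The problem is your commentary on how part~\ref{Theorem 4.iii} works: several statements there are false, and they would be genuine errors if you relied on them instead of the citation.

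\emph{$\Psi^{\lambda_k}$ is not compact once $\lambda_k>\lambda_1$.} Take a first eigenfunction $\varphi_1\in\M$ and $g_j\rightharpoonup 0$ with $\norm{g_j}=1$. Then $\Psi(\pi(\varphi_1+\varepsilon g_j))\le(1+\varepsilon)^p/\bigl(p\int_\Omega F(\varphi_1+\varepsilon g_j)\,dx\bigr)$, whose limit is $(1+\varepsilon)^p\lambda_1<\lambda_k$ for small $\varepsilon>0$. So these points lie in $\Psi^{\lambda_k}$ for large $j$, yet they have no strongly convergent subsequence. This is precisely why Proposition~\ref{Proposition 2} is a separate result.

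\emph{Where (e) and (c) are actually used.} They enter in Proposition~\ref{Proposition 2}, not in Theorem~\ref{Theorem 4}: through the map $J=\pi\comp K$ of \cite{Perera-CV-2021}, they give $\Psi(J(w))\le\Psi(w)$. They are not hypotheses of \cite[Theorem 4.6]{Pe-Ag-OR-book}.

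\emph{No deformation across $(\lambda_k,\lambda_{k+1})$.} Nothing guarantees that this interval is free of critical values of $\Psi$. The theorem does not claim that the $\lambda_k$ exhaust the eigenvalues of \eqref{102}, and this is not known in general. The deformation is also unnecessary: for $\lambda\in(\lambda_k,\lambda_{k+1})$, the equalities $i(\Psi^\lambda)=i(\M\setminus\Psi_\lambda)=k$ follow from monotonicity of the index and the definitions of $\lambda_k,\lambda_{k+1}$ alone.

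\emph{The endpoint equalities are the real content.} The bound $i(\M\setminus\Psi_{\lambda_{k+1}})\le k$ does not follow from the definition of $\lambda_{k+1}$, because $\sup\Psi=\lambda_{k+1}$ on that set. The bound $i(\Psi^{\lambda_k})\ge k$ does not come from compactness and excision. It comes from three ingredients: $i(\Psi^{\lambda_k+\epsilon})\ge k$, the continuity property of the index applied to the closed set $\Psi^{\lambda_k}$, and a (PS)-based odd deformation of $\Psi^{\lambda_k+\epsilon}$ into a neighborhood of $\Psi^{\lambda_k}$. These endpoint statements are exactly what the citation supplies.

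A small slip as well: the positive lower bound for $\Psi$ comes from the upper bound \eqref{202} for $F$ together with Poincar\'e's inequality, not from \ref{F2}. You state this correctly later, for part~\ref{Theorem 4.i}.
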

	
	The main result of this section is the following proposition, which extends Theorem 2.3 in Degiovanni and Lancelotti \cite{De-La-JFA-2009} %{MR2514055} 
	to the system \eqref{101}.
	
	\begin{proposition} \label{Proposition 2}
		If $\lambda_k < \lambda_{k+1}$, then the sublevel set $\Psi^{\lambda_k}$ has a compact symmetric subset $\mathcal{C}$ of index $k$ that is bounded in $(L^\infty(\Omega) \times L^\infty(\Omega)) \cap (C^1_\text{loc}(\Omega) \times C^1_\text{loc}(\Omega))$.
	\end{proposition}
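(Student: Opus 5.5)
We follow the strategy of Degiovanni and Lancelotti \cite{De-La-JFA-2009}, adapted to the system through the nonlinear eigenvalue framework of \cite[Chapter 4]{Pe-Ag-OR-book} and \cite{Perera-CV-2021}. The idea is to produce $\mathcal{C}$ as the image of $\Psi^{\lambda_k}$ (or of a suitable compact piece of it) under a \emph{solution operator}. This operator is continuous, odd and positively homogeneous, it maps into functions that are solutions of an auxiliary system, and elliptic regularity then gives the $L^\infty \cap C^1_{\rm loc}$ bounds.

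\textbf{Solution operator.} For $w \in W$, let $v=\mathcal{J}(w)\in W$ be the unique solution of
\[
\Ap[v] = \Bp[w].
\]
Existence and uniqueness follow because $\Ap$ is the derivative of the strictly convex coercive functional $\frac1p\|\cdot\|^p$; strict monotonicity comes from the equality case in the Hölder estimate above. Since $\Bp$ is compact and $\Ap^{-1}$ is continuous ($W$ is uniformly convex), $\mathcal{J}$ is continuous and compact. It is odd and $1$-homogeneous. Normalize by $\pi(w)=\mathcal{J}(w)/\|\mathcal{J}(w)\|$; note $\mathcal{J}(w)\neq 0$ since $\dualp{\Bp[w]}{w}>0$.

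\textbf{Key inequality.} For $w\in\mathcal{M}$ we want $\Psi(\pi(w))\le\Psi(w)$. Put $v=\mathcal{J}(w)$. Then
\[
\|v\|^p=\dualp{\Ap[v]}{v}=\dualp{\Bp[w]}{v}\le \dualp{\Bp[w]}{w}^{(p-1)/p}\dualp{\Bp[v]}{v}^{1/p},
\]
using the convexity inequality from Proposition \ref{Proposition 1}. Also
\[
\dualp{\Bp[w]}{w}=\dualp{\Ap[v]}{w}\le \|v\|^{p-1}\|w\|=\|v\|^{p-1}.
\]
Combining the two gives $\dualp{\Bp[w]}{w}\le \|v\|^{-p}\dualp{\Bp[v]}{v}$. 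In terms of $\Psi(w)=1/\dualp{\Bp[w]}{w}$ and the $p$-homogeneity of $\Bp$, this says exactly $\Psi(\pi(w))\le\Psi(w)$. Hence $\pi$ maps $\Psi^{\lambda_k}$ into itself.

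\textbf{Construction of $\mathcal{C}$.} By Theorem \ref{Theorem 4}\ref{Theorem 4.iii} applied with $\lambda_k<\lambda<\lambda_{k+1}$, we have $i(\Psi^{\lambda_k})=k$. Define $\mathcal{C}=\pi(\Psi^{\lambda_k})$.
\begin{itemize}
\item It is symmetric and contained in $\Psi^{\lambda_k}$.
\item By monotonicity of the index under odd continuous maps, $i(\mathcal{C})\ge k$. Since $\mathcal{C}\subset\Psi^{\lambda_k}$, also $i(\mathcal{C})\le k$, so $i(\mathcal{C})=k$.
\item Compactness: $\Psi^{\lambda_k}$ is closed and bounded, and $\mathcal{J}$ is compact. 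Moreover $\|\mathcal{J}(w)\|$ is bounded below on $\Psi^{\lambda_k}$, because $\|v\|^{p-1}\ge \dualp{\Bp[w]}{w}\ge 1/\lambda_k$. So $\pi(\Psi^{\lambda_k})$ is relatively compact. If it is not closed, replace it by its closure, which stays inside the closed set $\Psi^{\lambda_k}$. The index is then still $k$, by continuity and monotonicity of the index.
\end{itemize}

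\textbf{Regularity (main obstacle).} Each $v\in\mathcal{C}$ solves $-\Delta_p v_i = g_i$, where $g=\nabla F(w)/\|\mathcal{J}(w)\|^{p-1}$ and $|g|\le C(|u|^{p-1}+|v|^{p-1})$ with $w=(u,v)$. The source depends on $w$ and not on $v$, so there is no direct bootstrap on $v$ itself. The plan is to iterate $\pi$:
\begin{itemize}
\item Take $\mathcal{C}=\pi^m(\Psi^{\lambda_k})$ for $m$ large, which is still of index $k$ and contained in $\Psi^{\lambda_k}$.
\item At each step, a Moser/Stampacchia estimate for the $p$-Laplacian improves integrability: if $w\in L^{s}$ then $\mathcal{J}(w)\in L^{s'}$ with $s'>s$, and with uniform constants since $\|w\|=1$.
\item After finitely many steps this gives a uniform $L^\infty$ bound.
\item One more application gives $g$ bounded in $L^\infty$, so the $C^{1,\alpha}_{\rm loc}$ estimates of DiBenedetto/Tolksdorf yield the uniform $C^1_{\rm loc}$ bound.
\end{itemize}
The delicate points are the uniformity of the constants over the whole set and checking that compactness and the index survive the iteration.
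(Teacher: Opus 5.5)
Your proposal is correct and is essentially the paper's proof. The paper likewise sets $J=\pi\circ K$, where $K$ is the solution operator of $\Ap[w]=\Bp[\xi]$, and takes $\mathcal{C}=\overline{J^l(\Psi^{\lambda_k})}$. It gets compactness, symmetry and index $k$ by citing Perera's argument, which is exactly your inequality $\Psi(J(w))\le\Psi(w)$ plus compactness of $K$ and monotonicity of the index, and it obtains the $L^\infty\cap C^1_{\mathrm{loc}}$ bound for $l$ large from the Degiovanni--Lancelotti bootstrap you sketch; you write out details the paper delegates to references, though you reuse $\pi$ for $\pi\circ\mathcal{J}$, which clashes with the paper's radial projection.
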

	
	\begin{proof}
		Consider the map $K : W \to W,\, \xi = (\varphi,\psi) \mapsto w = (u,v)$, where $w$ is the unique solution of the equation
		\[
		\Ap[w] = \Bp[\xi],
		\]
		i.e., $(u,v)$ is the unique solution of the system
		\begin{equation*} %\label{100}
			\left\{\begin{aligned}
				- \Delta_p u & = F_u(\varphi,\psi) && \text{in } \Omega\\[5pt]
				- \Delta_p v & = F_v(\varphi,\psi) && \text{in } \Omega\\[5pt]
				u = v & = 0 && \text{on } \bdry{\Omega}.
			\end{aligned}\right.
		\end{equation*}
		Set $J = \pi \comp K$, where $\pi(w) = w/\norm{w},\, w \in W \setminus \set{0}$ is the radial projection on $\mathcal{M}$. Denoting by $J^l$ the $l$-fold composition of $J$ with itself, the argument in the proof of Perera \cite[Theorem 1.3]{Perera-CV-2021} %{MR4293883} 
		shows that $\mathcal{C} = \closure{J^l(\Psi^{\lambda_k})}$ is a compact symmetric subset of $\Psi^{\lambda_k}$ with index $k$ for any $l \ge 1$. As in the proof of Degiovanni and Lancelotti \cite[Theorem 2.3]{De-La-JFA-2009} we verify that $\mathcal{C}$ is bounded in $(L^\infty(\Omega) \times L^\infty(\Omega)) \cap (C^1_\text{loc}(\Omega) \times C^1_\text{loc}(\Omega))$ if $l$ is sufficiently large.
	\end{proof}
	
	\subsection{Linking theorems}
	
	As before, we consider $\mathcal{M}$ the unit sphere in $W$,
	\[
	\mathcal{M} = \{ w \in W \setminus \{0\} : \|w\| = 1 \},
	\]
	and the radial projection to $\mathcal{M}$, $\pi: W\setminus\{0\}\to \mathcal{M}$ given by $\pi(w)=w/\|w\|$.
	
	The proofs of Theorems \ref{Theorem 1} and \ref{Theorem 2} are based on the following linking theorem.
	\begin{theorem}[\cite{Ya-Pe-2016}, Theorem 2.2]\label{lk th}
		Let $\Phi$ be a $C^1$-functional defined on $W$. Assume that there exists $c^*\in\mathbb R$ such that $\Phi$ satisfies the $(PS)_c$ condition  for all $c<c^*$.
		Let $A_0$ and $B_0$ be disjoint nonempty closed symmetric
		subsets of $\mathcal{M}$ such that
		\[
		i(A_0) = i(\mathcal{M}\setminus B_0) <\infty.
		\]
		Assume that there exist $R>\rho>0$ and $e\in\mathcal{M}\setminus A_0$ such that
		\begin{eqnarray}\label{lk th2}
			\sup \Phi(A) \leq \inf \Phi(B), \quad \sup \Phi(X) < c^*,
		\end{eqnarray}
		where
		\begin{align*}
			A= \{tw: w\in A_0,\, 0 \leq t\leq R\}\cup \{R\pi((1-t) w+ te) : w\in A_0, \,0 \leq t\leq 1\},\\
			B= \{\rho w: w\in B_0\} \quad\text{and}\quad \quad X= \{tw: w\in A,\,\|w\|= R, \,0 \leq t\leq 1\}.
		\end{align*}
		Let 
		\[\Gamma = \{\gamma \in C(X,W) : \gamma(X) \text{ is closed and } \gamma_{|_A} = id_A\}
		\]
		and set
		\[
		c:=\inf_{\gamma\in\Gamma}\sup_{w\in\gamma(X)}\Phi(w)
		\]
		Then $\inf \Phi(B) \leq c< c^*$ and $c$ is a critical value of $\Phi$.
	\end{theorem}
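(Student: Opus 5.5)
The plan is to prove the two inequalities separately and then argue by contradiction with a deformation. The upper bound comes for free: the identity map of $X$ is admissible. Indeed, $X$ is closed: it is the cone over the compact-type closed set $A\cap\{\|w\|=R\}$, and $A_0$ is closed. Also $\mathrm{id}|_A=\mathrm{id}_A$, since $A\subset X$ (every point of $A$ is of the form $tw$ with $\|w\|=R$, $0\le t\le1$). Hence $c\le\sup\Phi(X)<c^*$.

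The heart of the matter is the linking statement
\[
\gamma(X)\cap B\neq\emptyset\quad\forall\,\gamma\in\Gamma ,
\]
which gives $c\ge\inf\Phi(B)$. I would argue by contradiction, using the monotonicity, continuity (neighbourhood) and subadditivity properties of the cohomological index $i$. Suppose $\gamma(X)\cap \rho B_0=\emptyset$.

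First, set $U=\{x\in X:\|\gamma(x)\|<\rho\}$. It is relatively open in $X$ and contains $0=\gamma(0)$; note $0\in A$ because $t=0$ is allowed. It misses $\{x\in A:\|x\|\ge\rho\}$, since $\gamma=\mathrm{id}$ there. On the relative boundary $\partial U$ we have $\|\gamma\|=\rho$, so
\[
\gamma(\partial U)\subset \rho(\mathcal M\setminus B_0).
\]

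Second, for each $w\in A_0$ I would use the path in $X$ that runs radially from $0$ to $Rw$ and then along $R\pi((1-t)w+te)$ to $Re$. This path starts in $U$ and ends outside $\closure U$, because $\|Re\|=R>\rho$ and $Re\in A$. Hence it meets $\partial U$. Exploiting the cone structure of $X$, this yields a continuous map from the "suspension-like'' set $\{tw\}\cup\{R\pi((1-t)w+te)\}$ into $\partial U$. After composing with $\pi\circ\gamma$, it gives a map into $\mathcal M\setminus B_0$ which agrees with the identity on $A_0$ up to the radial factor $\rho$, and which contracts $A_0$ to a point inside $\mathcal M\setminus B_0$.

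Third, the symmetrised image $\pi\gamma(\partial U)\cup(-\pi\gamma(\partial U))$ would then contain an odd deformation of $A_0$ together with a cone over it. By the standard piercing/suspension property of $i$ (as in Perera–Agarwal–O'Regan, Prop.~2.14 type results), this forces
\[
i\bigl(\mathcal M\setminus B_0\bigr)\ \ge\ i(A_0)+1 ,
\]
contradicting the hypothesis $i(A_0)=i(\mathcal M\setminus B_0)<\infty$. Making this step rigorous is the main obstacle. One must produce a genuinely odd map and handle the non-symmetric point $e$. What I would try first is to invoke directly the abstract result of \cite{Ya-Pe-2016}/\cite{Pe-Ag-OR-book} stating that $A$ homotopically links $B$ whenever $i(A_0)=i(\mathcal M\setminus B_0)$. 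I would check that the present $X$ and $\Gamma$ (with the extra requirement that $\gamma(X)$ be closed) fit that framework.

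Finally, assuming $c$ is not critical, I would get the contradiction by deformation. Since $\inf\Phi(B)\le c<c^*$, the \PS{c} condition holds, and the first deformation lemma gives $\varepsilon>0$ and a homeomorphism $\eta$ with
\[
\eta\bigl(\Phi^{c+\varepsilon}\bigr)\subset\Phi^{c-\varepsilon}.
\]
If $\sup\Phi(A)<c$, we may take $\varepsilon$ small so that $\eta=\mathrm{id}$ on $\Phi^{c-2\varepsilon}\supset A$. Choosing $\gamma$ with $\sup\Phi(\gamma(X))\le c+\varepsilon$, the map $\eta\circ\gamma$ lies in $\Gamma$: it is the identity on $A$, and its image is closed because $\eta$ is a homeomorphism. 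This contradicts the definition of $c$.

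In the borderline case $\sup\Phi(A)=\inf\Phi(B)=c$, we argue differently. The linking gives points $\gamma(x)\in B$ with $\Phi\ge c$. A deformation fixing a neighbourhood of $A$, taken away from critical points at level $c$ (standard in \cite{Ya-Pe-2016}), then shows that $K_c\cap B\ne\emptyset$ or that $c$ is critical. In either case $c$ is a critical value.
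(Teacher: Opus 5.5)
The paper gives no proof of this statement; it is quoted from \cite{Ya-Pe-2016}. Your sketch therefore has to supply the linking $\gamma(X)\cap B\neq\emptyset$ itself, and that is where it has a genuine gap. Your upper bound ($\mathrm{id}\in\Gamma$) is fine: $X$ is indeed closed, using $-e\notin A_0$, even though $A_0$ need not be compact. Your deformation argument when $\sup\Phi(A)<c$ is also fine. Step 2, however, is not valid. There is no continuous way to pick, along the paths $0\to Rw\to Re$, a point of $\partial U$, since first-crossing times are only semicontinuous. Moreover, $\partial U$ is merely a closed set separating $0$ from $\{R\pi((1-t)w+te):w\in A_0,\,t\in[0,1]\}$ in $X$, and $\rho A_0$ need not be null-homotopic in it: already for $A_0=\{\pm w\}$, $\partial U$ can be a continuum in which $\rho w$ and $-\rho w$ lie in different path components. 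So you get neither a contraction of $A_0$ inside $\mathcal{M}\setminus B_0$ nor an odd map from the suspension of $A_0$, and $i(\mathcal{M}\setminus B_0)\ge i(A_0)+1$ is never reached. Falling back on ``invoke the abstract result'' assumes what is to be proved.

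The missing idea is to argue in Alexander--Spanier cohomology, on which $i$ is built.

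(a) Let $k=i(A_0)=i(\mathcal{M}\setminus B_0)$. Then $\omega^k=0$ on $(\mathcal{M}\setminus B_0)/\mathbb{Z}_2$, with $\omega$ pulled back by the classifying map. Exactness of the Gysin (transfer) sequence $H^{k-1}(\mathcal{M}\setminus B_0)\xrightarrow{\tau}H^{k-1}\big((\mathcal{M}\setminus B_0)/\mathbb{Z}_2\big)\xrightarrow{\smile\omega}H^{k}\big((\mathcal{M}\setminus B_0)/\mathbb{Z}_2\big)$ gives $\alpha$ with $\tau(\alpha)=\omega^{k-1}$. By naturality, $\tau(\alpha|_{A_0})=\omega^{k-1}|_{A_0/\mathbb{Z}_2}\neq0$, so $\alpha|_{A_0}\neq0$ in $\widetilde H^{k-1}(A_0)$ (for $k=1$ use $\tau(1)=0$).

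(b) With your $U$, put $\beta=(\pi\circ\gamma)^*\alpha\in\widetilde H^{k-1}(\partial U)$. Since $\gamma=\mathrm{id}$ on $\rho A_0\subset\partial U$, the restriction of $\beta$ to $\rho A_0$ is $\alpha|_{A_0}$. Apply Mayer--Vietoris to the closed cover $X=\overline U\cup(X\setminus U)$, using $\widetilde H^k(X)=0$ because $X$ is star-shaped about $0$. This writes $\beta$ as a sum of restrictions of classes from $\overline U$ and from $X\setminus U$. Both vanish on $\rho A_0$, because $\rho A_0$ lies in the contractible sets $\{tw:w\in A_0,\,0\le t\le\rho\}\subset\overline U$ and $\{tw:w\in A_0,\,\rho\le t\le R\}\cup\{R\pi((1-t)w+te)\}\subset X\setminus U$. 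Hence $\alpha|_{A_0}=0$, a contradiction.

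Your borderline case $\sup\Phi(A)=c$, which forces $c=\inf\Phi(B)$, is also not settled by what you wrote:
\begin{itemize}
\item A deformation that fixes $A$, or a neighbourhood of it, cannot map $\Phi^{c+\varepsilon}$ into $\Phi^{c-\varepsilon}$ when $A$ reaches level $c$, so composing with it contradicts nothing.
\item Localising the deformation near $B$ would need $\operatorname{dist}(A,B)>0$, which can fail because $A_0$ and $B_0$ are only closed.
\item ``$K_c\cap B\neq\emptyset$ or $c$ is critical'' is not something you have derived.
\end{itemize}
What you need is homotopy stability of the linking. The argument above still works if $X$ is enlarged by a collar $A\times[0,1]$ glued along $A\times\{0\}$, and $\gamma|_A=\mathrm{id}$ is replaced by a homotopy in $W\setminus B$ from $\gamma|_A$ to the inclusion. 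For the pseudo-gradient deformation $\eta$ given by $(PS)_c$, $\Phi$ strictly decreases along moving trajectories. Since $\Phi\le c\le\inf\Phi(B)$ on $A$ and $A\cap B=\emptyset$, this gives $\eta(s,A)\cap B=\emptyset$ for all $s$. Now take $\gamma\in\Gamma$ with $\gamma(X)\subset\Phi^{c+\varepsilon}$. The map $\eta(1,\cdot)\circ\gamma$ must then still meet $B$, although its image lies in $\Phi^{c-\varepsilon}$ while $\Phi\ge c$ on $B$, which is the desired contradiction. In the paper's applications $A_0=C_\delta$ is compact and \eqref{supA<infB} is strict, so only your first case is actually used.
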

	
	For the proof of Theorem \ref{Theorem 5} we will apply the result of Perera \cite[Theorem 2.1]{Perera-JDE-2024}.
	At first, let us present some notations. The functional $E_0$ and $E$ defined in \eqref{E0 def} and \eqref{103}, respectively,  can be rewritten as 
	\begin{equation*}
		E_0(w) = I_{p}(w) - \lambda J_{p}(w) - \mathcal{Q}(w),
		\qquad w \in W,
	\end{equation*}
	and
	\begin{equation}\label{eq def general E}
		E(w) = I_{p}(w) - \lambda J_{p}(w) - \mathcal{Q}(w)  - \langle h, w \rangle,
		\qquad w \in W,
	\end{equation}
	where
	\begin{equation*}
		I_{p}(w) = \frac{1}{p} \langle A_{p}w, w \rangle,
		\qquad
		J_{p}(w) = \frac{1}{p} \langle B_{p}w, w \rangle
	\end{equation*}
	are the potentials of \( A_{p} \) and \( B_{p} \), with \( I_{p}(0) = J_{p}(0) = 0 \), and 
	\[
	\mathcal{Q}(w)=\int_\Omega Q(u,v)dx\quad\text{and} \quad\langle h, w \rangle=\int_\Omega\left(\gamma u+\eta v\right)dx,\quad \forall w=(u,v)\in W.
	\]
	As seen in the previous section, $A_p$ satisfies the conditions $(A_1)-(A_2)$ and $B_p$ satisfies the conditions $(B_1)-(B_3)$ from Perera \cite{Perera-JDE-2024}. On the other hand, we can see that $\mathcal{Q}(0) = 0$ and	\( \mathcal{Q}(w) = o(\lVert w\rVert^{p}) \) as \( w \to 0 \); $\mathcal{Q}(w)\geq0$ for all \( w \in W \); and
	\( \mathcal{Q} \) is bounded on bounded subsets of \( W \). 
	
	Assume that there is \( c^{*}_{h} > 0 \) such that \( E \) satisfies the
	\( (\mathrm{PS})_{c} \) condition at all levels \( c < c^{*}_{h} \) and 
	define
	\begin{equation*}
		c^{*} := \liminf_{\| h \|_{*} \to 0} c^{*}_{h}.
	\end{equation*}
	Under these notations and conditions, the following result was proved in \cite[Theorem 2.1]{Perera-JDE-2024}.
	\begin{theorem}\label{thm:2.1} 
		Let \( \lambda_{k} \le \lambda < \lambda_{k+1} \).
		Assume there exist \( R > 0 \) and, for every sufficiently small \( \delta > 0 \),
		a compact symmetric subset \( C_{\delta} \subset \Psi^{\lambda + \delta} \) with
		\( i(C_{\delta}) = k \) and an element \( w_{\delta} \in \mathcal{M} \setminus C_{\delta} \) such
		that, putting
		\[
		A_{\delta} := \left\{
		\pi\bigl((1 - \tau)v + \tau w_{\delta}\bigr)
		: v \in C_{\delta},\; 0 \le \tau \le 1
		\right\},
		\]
		one has
		\begin{align}
			\sup_{u \in A_{\delta}} E_{0}(Ru) &\le 0, \label{eq:2.8}\\
			\sup_{u \in A_{\delta},\,0 \le t \le R} E_{0}(tu) &< c^{*}. \label{eq:2.9}
		\end{align}
		Then there exists \( \mu_{0} > 0 \) such that
		the functional $E$ defined in ~\eqref{eq def general E} possesses two distinct nontrivial critical points
		\( u_{1}, u_{2} \) satisfying
		\begin{equation*}
			E(u_{1}) < E(u_{2}), \qquad
			0 < E(u_{2}) < c^{*}_{h},
		\end{equation*}
		for all  \(h \in W^{*}\backslash\{0\} \) with
		\(  \| h \|_{*} < \mu_{0} \).
	\end{theorem}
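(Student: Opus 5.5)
The idea is to produce the two critical points of $E$ from two linking configurations built from the same data. Both are perturbations of configurations for $E_0$ that are ``strict'' by a margin independent of $h$. Throughout, $\mu=\|h\|_*$ is small and I use the bound
\[
|E(w)-E_0(w)|\le \mu\|w\|.
\]
On the ball of radius $R$ this is a uniform error $\mu R$. First, since $\lambda<\lambda_{k+1}$, I take $B_0=\Psi_{\lambda_{k+1}}$. By Theorem~\ref{Theorem 4}\ref{Theorem 4.iii} (with $\lambda$ replaced by a number in $(\lambda,\lambda_{k+1})$ if needed) this gives $i(\mathcal M\setminus B_0)=k=i(C_\delta)$. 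For $w\in B_0$ we have $J_p(w)\le 1/(p\lambda_{k+1})$, and $\mathcal Q(w)=\o(\|w\|^p)$. Hence there is $\rho>0$ with
\[
\inf_{w\in B_0}E_0(\rho w)\ge \frac{\rho^p}{p}\Bigl(1-\frac{\lambda}{\lambda_{k+1}}\Bigr)-\o(\rho^p)=:2c_0>0 .
\]
So for $\mu\rho<c_0$ we get $\inf E(\rho B_0)\ge c_0$. I also check that $C_\delta\cap B_0=\emptyset$: indeed $\Psi\le\lambda+\delta<\lambda_{k+1}$ on $C_\delta$ once $\delta$ is small.

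Second, I obtain $u_2$ from Theorem~\ref{lk th} applied to $\Phi=E$ with $A_0=C_\delta$, $e=w_\delta$, $c^*$ replaced by $c^*_h$, and the $R$ of the hypothesis. On the radial part $\{tv:v\in C_\delta,\,0\le t\le R\}$ I use $\Psi(v)\le\lambda+\delta$ and $\mathcal Q\ge0$, which give
\[
E_0(tv)\le \frac{t^p}{p}\Bigl(1-\frac{\lambda}{\lambda+\delta}\Bigr)\le \frac{R^p\delta}{p\lambda}.
\]
On the outer part $R\,A_\delta$ I have $E_0\le0$ by \eqref{eq:2.8}. Fixing $\delta$ with $R^p\delta/(p\lambda)<c_0/2$ and then $\mu R<c_0/2$, I get $\sup E(A)<c_0\le\inf E(B)$. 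By \eqref{eq:2.9},
\[
\sup E(X)\le\sup E_0(X)+\mu R<c^*-\varepsilon_0 .
\]
Since $c^*=\liminf_{\|h\|_*\to0}c^*_h$, this is $<c^*_h$ for $\mu$ small. Theorem~\ref{lk th} then yields a critical value $c_2$ with $0<c_0\le c_2<c^*_h$.

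Third, I obtain $u_1$ with $E(u_1)<E(u_2)$ at a small level inside the ball $\overline{B}_\rho$. I would use the ``small'' linking of the cone $\{tv:v\in C_\delta,\,0\le t\le\rho\}$ against $B_0$ truncated at radius $\rho$. Equivalently, I would use the minimax
\[
c_1=\inf\Bigl\{\sup_{M}E\ :\ M\subset \overline{B}_\rho \text{ compact},\ M\text{ links }\rho B_0 \text{ in the sense above}\Bigr\}.
\]
On the cone, $E\le \rho^p\delta/(p\lambda)+\mu\rho<c_0$, so $c_1<c_0\le c_2$. Moreover $E\ge -\mu\rho$ on $\overline B_\rho$, so $c_1$ is finite and small, and (PS) holds there because $c_1<c^*_h$. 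The point needing care is that the deformation must not push the sets out of $\overline B_\rho$. On $\partial B_\rho$ intersected with the cone over $B_0$ we have $E\ge c_0>c_1$. The index condition forces any admissible set to meet $\rho B_0$ or stay away from it, so I would work instead with a pseudo-gradient flow localized to $\{E<c_0\}\cap \overline B_\rho$. Alternatively I could apply Ekeland's principle on the closed set $\{w\in\overline B_\rho: \pi(w)\notin \operatorname{int}B_0\}$ to get a (PS) sequence at level $c_1$. Nontriviality of $u_1$ then uses $h\ne0$: if $c_1\neq0$ we are done since $E(0)=0$. If $c_1=0$, I would perturb along a direction $w$ with $\langle h,w\rangle>0$, where $E(tw)<0$ for small $t$, to see that the minimax is attained away from $0$.

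I expect the main obstacle to be this third step. One must construct $u_1$ as a genuine critical point distinct from $0$ and $u_2$ without any sign information at the origin: $0$ is not a local minimum of $E_0$ when $\lambda\ge\lambda_1$. One must also make sure the localized deformation respects the index-$k$ linking structure. My fallback is to apply Theorem~\ref{lk th} a second time on the smaller scale ($R$ replaced by $\rho'<\rho$, $A_0=C_\delta$, $B_0$ as above but using level sets of $E$ relative to $-\mu\rho$). I would then separate the two critical values via the strict inequality $c_1<c_0\le c_2$.
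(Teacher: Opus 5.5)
Your second step is sound, and the paper itself gives no proof of this statement (it is imported from Perera's 2024 paper), so I am judging your argument on its own. With $B_0=\Psi_{\lambda_{k+1}}$, the bound $|E-E_0|\le\mu\|\cdot\|$, the radial estimate on $\{tv:v\in C_\delta\}$, and \eqref{eq:2.9} together with the definition of $c^*$, Theorem~\ref{lk th} applied to $E$ does give $u_2$ with $c_0\le E(u_2)<c^*_h$. The radial estimate is where "every sufficiently small $\delta$, with $R$ independent of $\delta$" is used.

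The gap is the third step: you never actually produce $u_1$. Your inf--sup over compact $M\subset\overline B_\rho$ has no minimax principle behind it. An admissible deformation would have to fix $\rho C_\delta$ and keep $M$ inside $\overline B_\rho$, and neither is available:
\begin{enumerate}
\item On $\rho C_\delta$ you only know $E\le\rho^p\delta/(p\lambda)+\mu\rho$, while $c_1$ may be as low as $-\mu\rho$, so the boundary need not lie below the minimax level.
\item On the part of $\partial B_\rho$ lying over $\mathcal M\setminus B_0$ nothing keeps $E$ above $c_1$. The downward flow can push $M$ out of the ball, and once outside, $M$ no longer has to meet the truncated cone.
\end{enumerate}
Also, the bound $E\ge-\mu\rho$ holds on the truncated cone over $B_0$, not on all of $\overline B_\rho$. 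Ekeland's principle on $\{w\in\overline B_\rho:\pi(w)\notin\operatorname{int}B_0\}$ only gives almost-minimizers of a constrained problem, not critical points of $E$. The fallback fails too: \eqref{eq:2.8} is available only at radius $R$, so the cap condition of Theorem~\ref{lk th} cannot be checked at a smaller radius. In any case the minimax value in Theorem~\ref{lk th} does not depend on $\rho$, so shrinking $\rho$ just reproduces $c_2$.

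The missing idea is to use the truncated cone you already have in a max--min rather than an inf--sup. Let $D=\{tw:w\in B_0,\,0\le t\le\rho\}$ and
\[
c_1=\sup_\phi\,\inf E(\phi(D)),
\]
where $\phi$ ranges over homeomorphisms of $W$ with $\phi=\mathrm{id}$ on $\rho B_0$. Then $c_1\ge\inf E(D)\ge-\mu\rho$.

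Every $\phi(D)$ meets $A$. Suppose not; then $A\subset W\setminus\phi(D)$. This set is open and homeomorphic to $W\setminus D$. The latter is contractible: push radially into $\{\|w\|\ge2\rho\}$, which retracts onto an infinite-dimensional sphere. So $W\setminus\phi(D)$ is an AR, and $\mathrm{id}_A$ extends to some $\gamma\in C(X,W\setminus\phi(D))$. Since $\rho B_0\subset\phi(D)$, $\gamma(X)$ misses $\rho B_0$, which contradicts the intersection property underlying Theorem~\ref{lk th}.

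Hence $-\mu\rho\le c_1\le\sup E(A)<c_0\le c_2<c^*_h$. Because $\rho B_0\subset\{E\ge c_0\}$ lies strictly above $c_1$, the upward deformation lemma preserves the class of $\phi$'s, so $c_1$ is a critical value.

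Finally, your case analysis for nontriviality is unnecessary. $E'(0)=-h\ne0$, since $A_p(0)=B_p(0)=0$ and $\mathcal Q'(0)=0$ (from $\mathcal Q(w)=\o(\|w\|^p)$). So every critical point of $E$ is nontrivial, and $E(u_1)=c_1<c_2=E(u_2)$ gives $u_1\ne u_2$.
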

	
	There is also a simplified version of the abstract result above corresponding to  $0<\lambda<\lambda_{1}$.
	In this regime the  functional exhibits a natural mountain--pass geometry
	without the need for linking over a nontrivial index set, and the construction can be
	carried out by considering the case \(C_{\delta}=\emptyset\), yielding the
	following elementary variant of the abstract theorem.
	\begin{theorem}\label{thm:2.1 small lambda} 
		Let \(  \lambda \in(0, \lambda_{1} )\).
		Assume there exist  an element \( w \in \mathcal{M} \) and \( R > 0 \) such
		that
		\begin{align}
			E_{0}(Rw) \le 0 \quad\text{and}\quad \sup_{0 \le t \le R} E_{0}(tw) < c^{*}. \label{eq:2.91}
		\end{align}
		Then there exists \( \mu_{0} > 0 \) such that
		the functional $E$ defined in ~\eqref{eq def general E} possesses two distinct nontrivial critical points
		\( u_{1}, u_{2} \) satisfying
		\begin{equation*}
			E(u_{1}) < E(u_{2}), \qquad
			0 < E(u_{2}) < c^{*}_{h},
		\end{equation*}
		for all  \(h \in W^{*}\backslash\{0\} \) with
		\(  \| h \|_{*} < \mu_{0} \).
	\end{theorem}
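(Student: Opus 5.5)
The plan is to argue directly, using a mountain pass argument combined with a local minimization. Formally this is the case $k=0$, $C_\delta=\emptyset$ of Theorem~\ref{thm:2.1}, but I would not rely on that degenerate reading. The two critical points will have levels of opposite sign: a local minimizer $u_1$ near the origin with $E(u_1)<0$, and a mountain pass point $u_2$ with $E(u_2)>0$. Different levels make them distinct, and both are nontrivial because $E(0)=0$. Throughout I assume, as in the standing hypotheses, that $c^*_h>0$, so that $(\mathrm{PS})_c$ holds for every $c\le 0$ and for every positive $c$ below $c^*_h$.

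\emph{Step 1: a uniform positive barrier near $0$.} By Theorem~\ref{Theorem 4}\ref{Theorem 4.i} we have $\lambda_1 J_p(w)\le I_p(w)$, so
\[
I_p(w)-\lambda J_p(w)\ \ge\ \frac1p\Bigl(1-\frac{\lambda}{\lambda_1}\Bigr)\|w\|^p .
\]
Since $\mathcal Q(w)=o(\|w\|^p)$, there exist $\rho>0$ and $a>0$ with $E_0\ge 2a$ on $\{\|w\|=\rho\}$ and $E_0>0$ on $\{0<\|w\|\le\rho\}$. Because $|\langle h,w\rangle|\le\|h\|_*\|w\|$, we get $E\ge a$ on $\{\|w\|=\rho\}$ whenever $\|h\|_*\le a/\rho$. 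The second half of this also gives $R>\rho$ in \eqref{eq:2.91}, since $E_0(Rw)\le 0$ excludes $R\le\rho$.

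\emph{Step 2: the first solution.} Take $h\neq 0$ and choose $\xi$ with $\langle h,\xi\rangle>0$. For small $t>0$,
\[
E(t\xi)=-t\langle h,\xi\rangle+O(t^p)<0,
\]
so $c_1:=\inf_{\|w\|\le\rho}E<0$. Ekeland's principle on the closed ball produces a Palais--Smale sequence at level $c_1$. The barrier $E\ge a$ on $\{\|w\|=\rho\}$ keeps this sequence away from the boundary, so it is a genuine $(\mathrm{PS})_{c_1}$ sequence for $E$. Since $c_1<0<c^*_h$, the $(\mathrm{PS})$ condition gives a critical point $u_1$ with $E(u_1)=c_1<0$.

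\emph{Step 3: the second solution.} Let $\sigma:=\sup_{0\le t\le R}E_0(tw)<c^*$, and set $\varepsilon:=(c^*-\sigma)/3$; if $c^*=\infty$, replace $c^*-\varepsilon$ below by any finite number larger than $\sigma$. Using $c^*=\liminf_{\|h\|_*\to0}c^*_h$, choose $\mu_0>0$ so small that the following three conditions hold whenever $\|h\|_*<\mu_0$:
\[
c^*_h>c^*-\varepsilon,\qquad \mu_0 R<\min\{a,\varepsilon\},\qquad \mu_0\le a/\rho .
\]
Then $E(Rw)\le E_0(Rw)+R\|h\|_*<a$ and $E(0)=0<a$, while every path from $0$ to $Rw$ crosses the sphere $\{\|w\|=\rho\}$, where $E\ge a$. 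This is a mountain pass geometry, so the level
\[
c_2:=\inf_{g}\ \max_{s\in[0,1]} E(g(s)),\qquad g\in C([0,1],W),\ g(0)=0,\ g(1)=Rw,
\]
satisfies $c_2\ge a>0$. Testing with the segment $g(s)=sRw$ gives
\[
c_2\le\sigma+R\|h\|_*<\sigma+\varepsilon<c^*-\varepsilon<c^*_h .
\]
Hence $(\mathrm{PS})_{c_2}$ holds, and the mountain pass theorem yields a critical point $u_2$ with $0<E(u_2)=c_2<c^*_h$. Since $E(u_1)<0<E(u_2)$, the two critical points are distinct and nontrivial.

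The step I expect to need the most care is this last compactness bookkeeping: the minimax level must fall strictly below $c^*_h$ uniformly in small $h$. This is exactly where the quantity $c^*$, defined as a $\liminf$, and the strict inequality in \eqref{eq:2.91} are used. The estimates themselves are routine once $\mu_0$ is chosen as above.
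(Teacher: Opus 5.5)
Your argument is correct, but it takes a different route from the paper. The paper gives no separate proof of this statement; it presents it as the degenerate case $k=0$, $C_\delta=\emptyset$ of Perera's abstract perturbation theorem (Theorem~\ref{thm:2.1}).

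You prove it directly, in two steps:
\begin{enumerate}
\item[(1)] Ekeland's principle on the ball $\{\|w\|\le\rho\}$ gives a local minimizer $u_1$ with negative energy. This uses two facts: the barrier $E\ge a$ on the sphere, which comes from $\lambda<\lambda_1$ and $\mathcal{Q}=o(\|w\|^p)$, and $h\neq 0$, which makes the infimum over the ball negative.
\item[(2)] The mountain pass theorem between $0$ and $Rw$ gives $u_2$. You choose $\mu_0$ from the $\liminf$ definition of $c^*$, so that $a\le E(u_2)\le \sigma+R\|h\|_*<c^*_h$.
\end{enumerate}

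The paper's route buys uniformity with the case $\lambda\ge\lambda_1$ and costs nothing once the abstract theorem is granted. Read literally, however, the choice $C_\delta=\emptyset$ makes $A_\delta$ empty, so hypotheses \eqref{eq:2.8}--\eqref{eq:2.9} become vacuous. One must tacitly replace $A_\delta$ by $\{w\}$ and trust that the abstract proof degenerates correctly. That is exactly the point you flagged.

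Your route is self-contained. It uses only $\mathcal{Q}\ge 0$, $\mathcal{Q}=o(\|w\|^p)$, and the $(\mathrm{PS})_c$ condition below $c^*_h$, including at negative levels, which the standing hypothesis does cover. It also yields more than the statement records: $E(u_1)<0<a\le E(u_2)$, and $u_1$ is a local minimizer of $E$ in $\{\|w\|<\rho\}$.
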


	\section{On local Palais-Smale condition}\label{sec-PS}
	
	We recall that the variational functional $E$ given in \eqref{103} satisfies the Palais-Smale compactness condition at the level $c \in \R$, or the \PS{c} condition for short, if every sequence $\seq{w_n} \subset W$ satisfying $E(w_n) \to c$ and $E'(w_n) \to 0$, called a \PS{c} sequence for $E$, has a strongly convergent subsequence.
	
	As in \cite{DeMoraisFilho-Souto-99}, we show that, in the system setting, the threshold level $c$ for which the functional satisfies the $(\mathrm{PS})_c$ condition is determined by the $p^*$--homogeneous nonlinearity, rather than by the classical best constant of the Sobolev embedding into $L^{p^*}(\Omega)$. Nevertheless, there is a precise relationship between these two quantities, which we now describe.  Define
	\begin{eqnarray}\label{Sp}
		S:=\inf_{u\in W^{1,p}_0\backslash\set{0}}\dfrac{\int_{\Omega}|\nabla u|^pdx}{\seq{\int_{\Omega} |u|^{p^*}dx}^{p/p^*}}
	\end{eqnarray}
	and
	\begin{eqnarray}\label{best constant}
		S_H:=\inf_{w\in W\backslash\set{0}}\dfrac{\int_{\Omega}\seq{|\nabla u|^p+|\nabla v|^p}dx}{\seq{p^*\int_{\Omega} H(u,v)dx}^{p/p^*}}.
	\end{eqnarray}
	Let
	\begin{eqnarray}\label{bc1}
		M_H:=\max\set{\left(p^*H(s,t)\right)^{p/p^*}:|s|^p+|t|^p=1}
	\end{eqnarray}
	and $(s_0,t_0)\in\R^2$ such that
	\begin{eqnarray}\label{bc2}
		M_H^{-1}\left(p^*H(s_0,t_0)\right)^{p/p^*}=|s_0|^p+|t_0|^p=1.
	\end{eqnarray}
	The lemma below is closely related to \cite[Lemma~3]{DeMoraisFilho-Souto-99}. 
	Since in that work the $p^*$--homogeneous function is defined only for $s,t \ge 0$, some modifications are required in our setting. 
	For the sake of completeness, we provide a full proof here.
	\begin{lemma}\label{bc3}
		Assume \ref{H1}-\ref{H3}. 
		For $S,S_H$ defined in \eqref{Sp} and \eqref{best constant}, and  $M_H$ given in \eqref{bc1}   we have $$S_H=M_H^{-1}S.$$
	\end{lemma}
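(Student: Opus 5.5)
We will prove the two inequalities $S_H\le M_H^{-1}S$ and $S_H\ge M_H^{-1}S$ separately.

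\emph{Upper bound $S_H\le M_H^{-1}S$.} Fix the maximizer $(s_0,t_0)$ from \eqref{bc2}. For any $u\in W_0^{1,p}(\Omega)\setminus\{0\}$ we test \eqref{best constant} with $w=(s_0u,t_0u)$. Then
\[
\int_\Omega(|\nabla(s_0u)|^p+|\nabla(t_0u)|^p)\,dx=\int_\Omega|\nabla u|^p\,dx .
\]
The homogeneity in \ref{H1} is stated only for $\alpha>0$. By \ref{H2}, however, $H(s,t)\le H(|s|,|t|)$, and the maximum in \eqref{bc1} may therefore be taken with $s_0,t_0\ge 0$. Then $H(s_0u,t_0u)=|u|^{p^*}H(s_0,t_0)$ wherever $u>0$. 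On $\{u<0\}$ we instead use $w=(s_0|u|,t_0|u|)$, noting $|\nabla|u||=|\nabla u|$, so that
\[
p^*\int_\Omega H(w)\,dx=M_H^{p^*/p}\int_\Omega|u|^{p^*}\,dx .
\]
The Rayleigh quotient of $w$ is thus $M_H^{-1}$ times that of $u$. Taking the infimum over $u$ gives the bound.

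\emph{Lower bound $S_H\ge M_H^{-1}S$.} This is the main step. We will show that for all $w=(u,v)$,
\[
\Big(p^*\int_\Omega H(u,v)\,dx\Big)^{p/p^*}\le M_H\Big[\Big(\int_\Omega|u|^{p^*}\Big)^{p/p^*}+\Big(\int_\Omega|v|^{p^*}\Big)^{p/p^*}\Big].
\]
Applying $S(\int|u|^{p^*})^{p/p^*}\le\int|\nabla u|^p$ to each component then finishes the proof.

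To prove the displayed inequality:
\begin{enumerate}
\item By \ref{H2}, replace $(u,v)$ by $(|u|,|v|)$.
\item Set $\Phi(a,b)=p^*H(a^{1/p^*},b^{1/p^*})$ for $a,b\ge0$. By \ref{H1}, $\Phi$ is $1$-homogeneous, and by \ref{H3} it is concave.
\item A concave $1$-homogeneous function on the quadrant satisfies Jensen's inequality for integrals in the form
\[
\int_\Omega\Phi(|u|^{p^*},|v|^{p^*})\,dx\le\Phi\Big(\int_\Omega|u|^{p^*}\,dx,\int_\Omega|v|^{p^*}\,dx\Big).
\]
This is because, by homogeneity, $\Phi$ is the infimum of linear functions $\ell(a,b)=\alpha a+\beta b$ dominating it (superlinearity of concave $1$-homogeneous maps), and each such $\ell$ commutes with the integral.
\item Writing $A=\int_\Omega|u|^{p^*}$ and $B=\int_\Omega|v|^{p^*}$, we get
\[
\Phi(A,B)=p^*H(A^{1/p^*},B^{1/p^*}).
\]
Raising to the power $p/p^*$ gives $\big(p^*H(\sigma,\tau)\big)^{p/p^*}$ with $\sigma=A^{1/p^*}$ and $\tau=B^{1/p^*}$.
\item By the definition \eqref{bc1} of $M_H$ and the $p$-homogeneity of $(p^*H)^{p/p^*}$, this is at most $M_H(\sigma^p+\tau^p)$, which is the displayed inequality.
\end{enumerate}

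The main obstacle is justifying the integral Jensen step for $\Phi$ on the closed quadrant, including boundary points where $a=0$ or $b=0$. We plan to handle it via the supporting-line representation $\Phi(a,b)=\inf\{\alpha a+\beta b\}$. If that representation proves awkward at the boundary, the fallback is to approximate $\Phi$ by its values on the interior.

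A second, minor point is that \ref{H1} only allows $\alpha>0$. This causes no difficulty, since after step 1 all arguments of $H$ are nonnegative.
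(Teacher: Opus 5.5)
Your proof is correct and follows the paper's argument. The upper bound uses the same test pairs $(s_0|u|,t_0|u|)$ with a nonnegative maximizer. The lower bound is the same chain: \ref{H2}, then the H\"older-type inequality $\int_\Omega H(|u|,|v|)\,dx\le H(\|u\|_{p^*},\|v\|_{p^*})$, then the definition of $M_H$, then Sobolev applied componentwise.

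The only difference is that the paper quotes that H\"older-type inequality from Proposition~4 of de Morais Filho and Souto, while you prove it via the supporting-line (Jensen) argument for the concave $1$-homogeneous $\Phi$. Your boundary concern is harmless: if $\int_\Omega|u|^{p^*}dx=0$ (or the analogue for $v$), the inequality holds with equality directly by homogeneity.
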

	\begin{proof}
		Let $\set{u_n}_{n\in\N}$ be a minimizing sequence for $S$, i.e.,
		\[\dfrac{\om|\nabla u_n|^p}{\seq{\om|u_n|^{p^*}}^{p/p^*}}\to S.
		\]
		We may assume $u_n\geq0$.   Consider the sequence $w_n=(s_0u_n,t_0u_n)$, where $(s_0,t_0)\in\R^2$ is as in $\eqref{bc2}$. Then
		\begin{eqnarray*}
			S_H\leq\dfrac{\om\seq{|\nabla s_0u_n|^p+|\nabla t_0 u_n|^p}}{\seq{p^*\om H(s_0u_n,t_0u_n)}^{p/p^*}}=M_H^{-1}\dfrac{\om|\nabla u_n|^p}{\seq{\om|u_n|^{p^*}}^{p/p^*}}.%\leq M_H^{-1}S,
		\end{eqnarray*}
		Letting $n\to\infty$ we obtain $S_H\leq M^{-1}_H S$. Now let us prove the other inequality; let $\set{w_n}_{n\in\N}$, $w_n=(u_n,v_n)$, be a minimizing sequence for $S_H$. Since $H$ satisfies \ref{H1} and \ref{H3}, we can apply the ``H\"older type inequality" from De Morais Filho and Souto \cite[Proposition 4]{DeMoraisFilho-Souto-99} to obtain
		\[
		p^*\om H(|u_n|,|v_n|)dx\leq H(\|u_n\|_{p^*},\|v_n\|_{p^*})\quad\forall n\in\mathbb{N}.
		\]
		Then, using \ref{H2}  we see that
		\begin{eqnarray*}
			\dfrac{\om\seq{|\nabla u_n|^p+|\nabla v_n|^p}dx}{\seq{p^*\om H(u_n,v_n)dx}^{p/p^*}}\geq S\dfrac{\|u_n\|_{p^*}^p+\|v_n\|_{p^*}^p}{(p^*H(\|u_n\|_{p^*},\|v_n\|_{p^*}))^{p/p^*}}\geq M_H^{-1}S,
		\end{eqnarray*}
		and after passing to the limit, we have $S_H\geq M_H^{-1}S$. This concludes the proof.
	\end{proof}
	
	\begin{proposition}\label{bc4}
		Assume \ref{F1}-\ref{F3}, \ref{G subcri}-\ref{AR G} and \ref{H1}-\ref{H3}. 
		There exists a constant $c(\gamma,\eta)>0$, with $c(\gamma,\eta)\to0$ as  $\gamma,\eta\to0$ in $L^{(p^\ast)'}(\Omega)$, such that $E$ satisfies the $(PS)_c$ condition for all $c<S_H^{N/p}/N-c(\gamma,\eta)$. In particular, $E_0$ satisfies  $(PS)_c$  for all $c<S_H^{N/p}/N$.
	\end{proposition}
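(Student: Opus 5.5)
The plan is the standard Brezis--Lieb/concentration argument, adapted to the $p^*$-homogeneous $H$. Let $(w_n)$, $w_n=(u_n,v_n)$, be a \PS{c} sequence for $E$, and write $h=(\gamma,\eta)$ with $\|h\|_*\le C(\|\gamma\|_{(p^*)'}+\|\eta\|_{(p^*)'})$.

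\textbf{Step 1: boundedness.} Consider $E(w_n)-\frac1r\langle E'(w_n),w_n\rangle$ with $r\in(p,p^*)$ from \ref{AR G}. By \eqref{201}, the $F$-terms contribute $\lambda(\frac pr-1)\int F$, the $H$-term $(\frac{p^*}{r}-1)\int H\ge0$, the $G$-term is $\ge 0$ by \ref{AR G}, and the $h$-term is $O(\|h\|_*\|w_n\|)$. The $F$-term has a bad sign, so I would argue by contradiction. If $\|w_n\|\to\infty$, set $\tilde w_n=w_n/\|w_n\|$. The identity gives $(\frac{p^*}{r}-1)\int H(w_n)\le C\|w_n\|^p\int F(\tilde w_n)+o(\|w_n\|^p)$, and $\int F(w_n)\le C\|w_n\|_{p^*}^p|\Omega|^{1-p/p^*}$. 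Dividing $\langle E'(w_n),w_n\rangle$ by $\|w_n\|^p$ and using \ref{H2}, one gets $\int H(w_n)\gtrsim \|w_n\|^{p^*}_{p^*}$, which dominates $\|w_n\|_{p^*}^p$. Combining these yields $\|w_n\|_{p^*}^{p^*}\le C\|w_n\|_{p^*}^p+o(\|w_n\|^p)$. Hence $\int F(w_n)=o(\|w_n\|^p)$ and $\int H(w_n)=o(\|w_n\|^p)$. Similarly $\int G(w_n)=o(\|w_n\|^p)$, by the subcritical bound and interpolation with a small $L^{p^*}$ norm. This contradicts $\|w_n\|^p=\lambda p\int F+p\int Q+\dots+O(\|w_n\|)$. (An alternative is the standard trick of estimating $\int F$ by $\varepsilon\int H+C_\varepsilon$ via Young's inequality, since $p<p^*$; I would use that, as it is cleaner.)

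\textbf{Step 2: passage to the limit.} Pass to a subsequence with $w_n\rightharpoonup w$ in $W$, strongly in $L^s\times L^s$ for $s<p^*$, and a.e. Using \ref{G subcri}, the $F$- and $G$-terms and their derivatives converge. To get a.e.\ convergence of gradients (needed for the $p$-Laplacian), I would invoke the Boccardo--Murat type argument, or simply that the limit $w$ is a critical point of $E$ (weak-$*$ limits). Then apply the Brezis--Lieb lemma to $\int|\nabla u_n|^p$ and to $\int H(w_n)$. For $H$ this requires a Brezis--Lieb version for homogeneous $C^1$ functions, valid since $|\nabla H(s,t)|\le C(|s|^{p^*-1}+|t|^{p^*-1})$. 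With $z_n=w_n-w$ one obtains $\|z_n\|^p-p^*\int H(z_n)\to0$ and $E(w_n)=E(w)+\frac1p\|z_n\|^p-\int H(z_n)+o(1)$.

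\textbf{Step 3: energy estimate.} If $\|z_n\|^p\to \ell>0$, then by the definition \eqref{best constant} of $S_H$ we have $\ell\ge S_H\,\ell^{p/p^*}$, so $\ell\ge S_H^{N/p}$. Hence $c\ge E(w)+\frac1N S_H^{N/p}$. Since $w$ is a critical point,
\[
E(w)=E(w)-\tfrac1r\langle E'(w),w\rangle\ge \lambda\Bigl(\tfrac pr-1\Bigr)\int F(w)-\Bigl(1-\tfrac1r\Bigr)\|h\|_*\|w\|.
\]
This is the main obstacle, because the $\lambda F$ term has the wrong sign and prevents $E(w)\ge -c(\gamma,\eta)$ in general. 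The fix I would try first is to use the $p^*$ identity instead: compute $E(w)-\frac1p\langle E'(w),w\rangle=(\frac{p^*}{p}-1)\int H(w)+\int(\frac1pG_s s+\frac1pG_t t-G)-(1-\frac1p)\langle h,w\rangle$. Here the $F$-terms cancel exactly by $p$-homogeneity, the $G$-term is $\ge0$ by \ref{AR G} with $r>p$, and the $H$-term is $\ge0$. So
\[
E(w)\ge \Bigl(\tfrac{p^*}{p}-1\Bigr)c_H\|w\|_{p^*}^{p^*}-C\|h\|_*\|w\|_{p^*}\ge -c(\gamma,\eta),
\]
where, using $\|h\|$ in $L^{(p^*)'}$ duality, Young's inequality gives $c(\gamma,\eta)=C\|h\|^{(p^*)'}\to0$. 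Therefore, if $c<S_H^{N/p}/N-c(\gamma,\eta)$, we must have $\ell=0$, i.e.\ $w_n\to w$ strongly. For $h=0$ we have $c(0,0)=0$.
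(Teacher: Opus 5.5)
Your proposal is correct and is essentially the paper's proof: the Young-inequality route to boundedness that you prefer is exactly the paper's, and the Boccardo--Murat, Brezis--Lieb and $S_H$ steps coincide. Your aside ``or simply that $w$ is a critical point'' is circular, though, since it is the a.e.\ convergence of gradients that yields $\langle E'(w),w\rangle=0$ and the Brezis--Lieb splitting of $\int_\Omega|\nabla u_n|^p\,dx$. The only real variation is the last step: you keep $(\frac{p^*}{p}-1)\int_\Omega H(w)\,dx$ and absorb $\langle h,w\rangle$ by Young's inequality, obtaining $c(\gamma,\eta)=C(\|\gamma\|_{(p^*)'}^{(p^*)'}+\|\eta\|_{(p^*)'}^{(p^*)'})$ with no need for a uniform bound on $(PS)_c$ sequences, whereas the paper discards $H(w)\ge 0$ and uses $\|w\|\le K_2$ to get a $c(\gamma,\eta)$ linear in $\|\gamma\|_{(p^*)'}+\|\eta\|_{(p^*)'}$.
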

	\begin{proof}
		Let $w_n=(u_n,v_n)$ be a $(PS)_c$ sequence for $E$ in $W$, for  $c<S_H^{N/p}/N$. This implies that $E(w_n)=c+o(1)$ and $E^\prime(w_n)\xi=o({\norm{\xi}})$ for all $\xi\in W$. Then
		\begin{align}\label{6-1}
			\frac{1}{p}\int_{\Omega}\seq{|\nabla u_n|^p+|\nabla v_n|^p}dx-&\lambda\int_{\Omega} F(u_n,v_n)dx-\int_{\Omega} H(u_n,v_n)dx-\int_{\Omega} G(u_n,v_n)dx\nonumber\\
			&-\int_{\Omega}\seq{\gamma u_n+\eta v_n}=c+o(1)
		\end{align}
		and
		\begin{multline} \label{207}
			\int_\Omega (|\nabla u_n|^{p-2}\, \nabla u_n \cdot \nabla \varphi + |\nabla v_n|^{p-2}\, \nabla v_n \cdot \nabla \psi)\, dx - \lambda \int_\Omega (F_u(u_n,v_n)\, \varphi + F_v(u_n,v_n)\, \psi)\, dx\\
			- \int_\Omega (H_u(u_n,v_n)\, \varphi + H_v(u_n,v_n)\, \psi)\, dx -\int_\Omega (G_u(u_n,v_n)\, \varphi + G_v(u_n,v_n)\, \psi)\, dx\\
			-\int_{\Omega}\seq{\gamma \varphi+\eta \psi}dx
			= \o(\norm{\xi})
		\end{multline}
		for all $\xi = (\varphi,\psi) \in W$. First we show that $\seq{w_n}$ is bounded in $W$. Taking $\xi = w_n$ in \eqref{207} and using \eqref{201} for  $F$ and $H$ give
		\begin{align}\label{6-2}
			\int_{\Omega}\seq{|\nabla u_n|^p+|\nabla v_n|^p}-&\lambda p\int_{\Omega} F(u_n,v_n)-p^*\int_{\Omega} H(u_n,v_n)dx-\int_{\Omega} \nabla G(u_n,v_n)(u_n,v_n)dx\nonumber\\
			&-\int_{\Omega}\seq{\gamma u_n+\eta v_n}dx=o(\|w_n\|).
		\end{align}
		Let $r\in(p,p^*)$ as in \ref{AR G}. After multiplying \eqref{6-2} by $1/r$ and subtracting it from \eqref{6-1} we have
		\begin{align*}
			& \seq{\frac{1}{p}-\dfrac{1}{r}}\int_{\Omega}\seq{|\nabla u_n|^p+|\nabla v_n|^p}dx+\lambda\seq{\dfrac{p}{r}-1}\int_{\Omega} F(u_n,v_n)dx\nonumber\\
			&\,\,+\seq{\dfrac{p^*}{r}-1}\int_{\Omega} H(u_n,v_n)dx
			%\frac{1}{r}\int_{\Omega} \left[\nabla G(u_n,v_n)(u_n,v_n)-r G(u_n,v_n)\right]dx+
			+ \seq{\dfrac{1}{r}-1}\int_{\Omega}\seq{\gamma u_n+\eta v_n}dx\le c+o(1)\|w_n\|.
		\end{align*}
		So, due to \ref{H2} and since $F$ satisfies \eqref{202}  we obtain
		\begin{eqnarray}
			\label{6-3}
			\seq{\dfrac{1}{p}-\dfrac{1}{r}}\|w_n\|^p&\!\leq\!&\lambda\seq{1-\dfrac{p}{r}}C_F\int_{\Omega}\seq{|u_n|^p+|v_n|^p}dx
			-\seq{\dfrac{p^*}{r}-1}c_H\int_{\Omega}\seq{|u_n|^{p^*}+|v_n|^{p^*}}dx\nonumber\\
			&&+\seq{1-\dfrac{1}{r}}
			\seq{\|\gamma\|_{p^*\prime}\|u_n\|_{p^*}+\|\eta\|_{p^*\prime}\|v_n\|_{p^*}}
			+c+o(1)\|w_n\|.     
		\end{eqnarray}
		For simplicity let us define
		\[
		\alpha_1:=\dfrac{1}{p}-\dfrac{1}{r},\quad\alpha_2:=\lambda\seq{1-\dfrac{p}{r}}C_F,\quad\alpha_3:=\seq{\dfrac{p^*}{r}-1}c_H,\quad\alpha_4:=1-\dfrac{1}{r}
		\]
		so that \eqref{6-3} can be written as
		\begin{eqnarray}\label{boundness}
			\alpha_1\|w_n\|^p&\leq& \alpha_2\seq{\|u_n\|_p^p+\|v_n\|_p^p}-\frac{\alpha_3}{2}\seq{\|u_n\|_{p^*}^{p^*}+\|v_n\|_{p^*}^{p^*}}
			+\seq{\alpha_4\|\gamma\|_{p^*\prime}\|u_n\|_{p^*}-\frac{\alpha_3}{2}\|u_n\|_{p^*}^{p^*}}\nonumber\\
			&&+\seq{\alpha_4\|\eta\|_{p^*\prime}\|v_n\|_{p^*}-\frac{\alpha_3}{2}\|v_n\|_{p^*}^{p^*}}
			+c+o(1)\|w_n\|.
		\end{eqnarray}
		Notice that there exists a constant $K=K(\lambda,p,N,r, C_F,c_H)>0$ such that
		\begin{eqnarray*}
			\alpha_2\seq{\|u\|_p^p+\|v\|_p^p}-\frac{\alpha_3}{2}\seq{\|u\|_{p^*}^{p^*}+\|v\|_{p^*}^{p^*}}\leq K\quad\forall (u,v)\in W.
		\end{eqnarray*}
		and this shows that the sequence $\|w_n\|$ is bounded in $W$. 
		Passing to a subsequence, we may now assume that $\seq{w_n}$ converges to some $w = (u,v)$ weakly in $W$, strongly in $L^q(\Omega) \times L^s(\Omega)$ for all $q, s\in [1,p^\ast)$, and a.e.\! in $\Omega$. Set $\widetilde{u}_n = u_n - u$, $\widetilde{v}_n = v_n - v$, and $\widetilde{w}_n = (\widetilde{u}_n,\widetilde{v}_n) = w_n - w$. 
		%We will show that if $\norm{\widetilde{w}_n} \ge \eps_0 > 0$, then \eqref{204} does not hold, and conclude that $w_n \to w$ in $W$ for a further subsequence.
		Since \eqref{202} holds for $F$ and $w_n\to w$ in $L^p(\Omega) \times L^p(\Omega)$, and $w_n\rightharpoonup w$ in $W$, we have
		\[
		\int_\Omega F(u_n,v_n)\, dx \to \int_\Omega F(u,v)\, dx,\quad\text{and}\quad \int_{\Omega}\seq{\gamma u_n+\eta v_n}dx\to\int_{\Omega}\seq{\gamma u+\eta v}dx.
		\]
		Recalling that $G$ satisfies \ref{G subcri} we also get 
		\[
		\int_\Omega G(u_n,v_n)\, dx \to \int_\Omega G(u,v)\, dx\quad\text{and}\quad\int_\Omega (u_n,v_n)\cdot \nabla G(u_n,v_n)\, dx \to \int_\Omega(u,v)\cdot\nabla G(u,v)\, dx.
		\]
		So \eqref{6-1} and \eqref{6-2} reduce to
		\begin{multline} \label{209}
			\frac{1}{p} \int_\Omega (|\nabla u_n|^p + |\nabla v_n|^p)\, dx - \lambda \int_\Omega F(u,v)\, dx - \int_\Omega H(u_n,v_n)\, dx \\
			-\int_\Omega G(u,v)\, dx -\int_{\Omega}\seq{\gamma u+\eta v}dx= c + \o(1)
		\end{multline}
		and
		\begin{multline} \label{210}
			\int_\Omega (|\nabla u_n|^p + |\nabla v_n|^p)\, dx - \lambda p \int_\Omega F(u,v)\, dx - p^\ast \int_\Omega H(u_n,v_n)\, dx \\
			-\int_\Omega (u,v)\cdot\nabla G(u,v)\, dx -\int_{\Omega}\seq{\gamma u+\eta v}dx= \o(1),
		\end{multline}
		respectively. Multiplying \eqref{210} by $1/p^*$ and subtracting it from \eqref{209} gives
		\begin{eqnarray}\label{6-5}
			\dfrac{1}{N}\int_{\Omega}\seq{|\nabla u_n|^p+|\nabla v_n|^p}dx-\lambda\dfrac{p}{N}\int_{\Omega} F(u,v)dx+\seq{\dfrac{1}{p^*}-1}\int_{\Omega}\seq{\gamma u+\eta v}dx\nonumber\\
			-\int_{\Omega} G(u,v)dx+\dfrac{1}{p^*}\int_{\Omega}(u,v)\cdot \nabla G(u,v)dx=c+o(1).
		\end{eqnarray}
		Taking $\psi = 0$ in \eqref{207} and applying Boccardo and Murat \cite[Theorem 2.1 and Remark 2.1]{boccardo-murat} %{MR1183665} 
		shows that $\nabla u_n \to \nabla u$ a.e.\! in $\Omega$ for a renamed subsequence. Similarly, $\nabla v_n \to \nabla v$ a.e.\! in $\Omega$ for a further subsequence. So taking $\xi = w$ in \eqref{207}, passing to the limit, and using \eqref{201} for $F$ and $H$ gives
		\begin{multline}\label{6-6}
			\int_\Omega (|\nabla u|^p + |\nabla v|^p)\, dx - \lambda p \int_\Omega F(u,v)\, dx - p^\ast \int_\Omega H(u,v)\, dx\\
			-\int_\Omega (u,v)\cdot\nabla G(u,v)\, dx- \int_\Omega (\gamma u+\eta v)\, dx= 0.
		\end{multline}
		We have
		\[
		\int_\Omega (|\nabla u_n|^p + |\nabla v_n|^p - |\nabla u|^p - |\nabla v|^p)\, dx = \int_\Omega (|\nabla \widetilde{u}_n|^p + |\nabla \widetilde{v}_n|^p)\, dx + \o(1)
		\]
		and
		\[
		\int_\Omega (H(u_n,v_n) - H(u,v))\, dx = \int_\Omega H(\widetilde{u}_n,\widetilde{v}_n)\, dx + \o(1)
		\]
		by the Br{\'e}zis-Lieb lemma (see \cite[Theorem 1]{Brezis-Lieb-1983} and \cite[Lemma 8]{DeMoraisFilho-Souto-99}).
		Now, multiplying \eqref{6-6} by $1/N$ and subtracting it from \eqref{6-5} one gets
		\begin{eqnarray*}
			\dfrac{1}{N}\int_{\Omega}\seq{|\nabla \widetilde{u}_n|^p+|\nabla\widetilde{v}_n|^p}dx+\frac{p^*}{N}\int_{\Omega} H(u,v) dx+\seq{\dfrac{1}{p}-1}\int_{\Omega}(\gamma u+\eta v) dx\nonumber\\
			-\int_{\Omega} G(u,v)dx+\dfrac{1}{p}\int_{\Omega}(u,v)\cdot \nabla G(u,v)dx=c+o(1).
		\end{eqnarray*}
		Since $G$ satisfies \ref{AR G} and $H\ge0$, it follows that
		\begin{eqnarray}\label{6-7}
			\dfrac{1}{N}\int_{\Omega}\seq{|\nabla \widetilde{u}_n|^p+|\nabla\widetilde{v}_n|^p}dx\le\seq{1-\dfrac{1}{p}}\int_{\Omega}(\gamma u+\eta v) dx+c+o(1).
		\end{eqnarray}
		Now notice that
		\begin{eqnarray*}
			\int_\Omega|\gamma u+\eta v|dx\leq\|\gamma\|_{(p^{*})'}S^{-1/p}\seq{\int_\Omega|\nabla u|^p}^{1/p}+\|\eta\|_{(p^{*})'}S^{-1/p}\seq{\int_\Omega |\nabla v|^p}^{1/p}\\
			\leq 2S^{-1/p}\seq{\|\gamma\|_{(p^{*})'}+\|\eta\|_{(p^{*})'}}\|w\|.
		\end{eqnarray*}
		By \eqref{boundness}, there exists a constant $K_2>0$ independent of $c<S_H^{N/p}/N$ and of the $(PS)_c$ sequence $\seq{w_n}$, such that $\|w_n\|\leq K_2$, and since $w_n\rightharpoonup w$, we have
		\(
		\|w\|\leq K_2.
		\)
		Consequently,
		\[
		\int_\Omega|\gamma u+\eta v|\leq2S^{-1/p}K_2\seq{\|\gamma\|_{(p^{*})'}+\|\eta\|_{(p^{*})'}}.
		\]
		This together with \eqref{6-7} gives us
		\begin{eqnarray}\label{6-8}
			\dfrac{1}{N}\int_\Omega\seq{|\nabla \widetilde{u}_n|^p+|\nabla\widetilde{v}_n|^p}-2S^{-1/p}K_2\seq{\|\gamma\|_{(p^{*})'}+\|\eta\|_{(p^{*})'}}\leq c+o(1).
		\end{eqnarray}
		On the other hand, combining \eqref{6-2} and \eqref{6-6} gives us 
		\begin{eqnarray}\label{6-9}
			\int_\Omega\seq{|\nabla \widetilde{u}_n|^p+|\nabla\widetilde{v}_n|^p}=p^*\int_\Omega H(\widetilde{u}_n,\widetilde{v}_n)+o(1).
		\end{eqnarray}
		Now, suppose $\|\widetilde{w}_n\|\not\to 0$, then we can assume that there exists $\varepsilon>0$ such that 
		$\|\widetilde{w}_n\|^p\geq\varepsilon$ for large $n$. So, \eqref{6-9} together with \eqref{best constant} implies that
		\[
		\dfrac{1}{N}S_H^{N/p}\leq\dfrac{1}{N}\int_\Omega\seq{|\nabla \widetilde{u}_n|^p+|\nabla\widetilde{v}_n|^p}+o(1).
		\]
		Consequently, using  \eqref{6-8} we reach
		\[
		\dfrac{1}{N}S_H^{N/p}-2S^{-1/p}K_2\seq{\|\gamma\|_{(p^{*})'}+\|\eta\|_{(p^{*})'}}\leq c.
		\]
		Therefore, defining $c(\gamma,\eta):=2S^{-1/p}K_2\seq{\|\gamma\|_{(p^{*})'}+\|\eta\|_{(p^{*})'}}$ we conclude this proof.
	\end{proof}

	\section{Linking geometry and proof of main theorems}\label{sec-geo}
	
	In  Proposition \ref{bc4} 
	we have proved that the functional $E$, defined in \eqref{103}, satisfies the $(PS)_c$ condition for $c<c^*+c(\gamma,\eta)$, with $c^*=S_H^{N/p}/N$ and $c(\gamma,\eta)\to0$ as $\gamma,\eta\to0$ in $L^{(p^*)'}(\Omega)$. In particular, for $\gamma=\eta=0$ the functional $E_0$
	given in $\eqref{E0 def}$ satisfies the $(PS)_c$ condition for $c<c^*$.  In this section we will show that the functional $E_0$ satisfies the additional conditions of Theorems \ref{lk th} and $\ref{thm:2.1}$, when $\lambda\ge\lambda_1$.

	Let
	\begin{eqnarray*}
		u_\varepsilon=\dfrac{c_{N,p}\varepsilon^{(N-p)/p^2}}{(\varepsilon+|x|^{p/(p-1)})^{(N-p)/p}},\quad\varepsilon>0
	\end{eqnarray*}
	where $c_{N,p}>0$ is a constant such that
	\begin{eqnarray*}
		\int_{\R^N}|\nabla u_\varepsilon|^pdx=\int_{\R^N}u_{\varepsilon}^{p^*}dx=S^{N/p}.
	\end{eqnarray*}
	Suppose $0\in\Omega$. Let $\delta_0=\text{dist}(0,\partial\Omega)$, let $\zeta:[0,\infty)\to[0,1]$ be a smooth function such that $\zeta(t)=1$ for $t\leq1/4$, and $\zeta(t)=0$ for $t\geq1/2$. Define
	\begin{eqnarray*}
		u_{\varepsilon,\delta}(x)=\zeta(|x|/\delta)u_\varepsilon(x)\quad\text{and}\quad
		z_{\varepsilon,\delta}(x)=\dfrac{u_{\varepsilon,\delta}(x)}{\seq{\int_{\R^N} u_{\varepsilon,\delta}^{p^*}dx}^{1/p^*}},\quad0<\delta\leq\delta_0/2,
	\end{eqnarray*}
	so that
	\begin{eqnarray}\label{estimate1}
		\om z_{\varepsilon,\delta}^{p^*}dx=1,\quad\om|\nabla z_{\varepsilon,\delta}|^pdx\leq S+c_1\varepsilon^{\frac{N-p}{p}}\delta^{-\frac{N-p}{p-1}},
	\end{eqnarray}
	for $S$ as in \eqref{Sp}, and
	\begin{eqnarray}\label{estimate2}
		\om z_{\varepsilon,\delta}^pdx\geq\begin{cases}
			c_2\varepsilon^{p-1}&\quad\text{if}\quad N>p^2\\
			c_2\varepsilon^{p-1}|\log\seq{\varepsilon\delta^{-p/(p-1)}}|&\quad\text{if}\quad N=p^2\\
			c_2\varepsilon^{\frac{N-p}{p}}\delta^{N-\frac{p(N-p)}{p-1}}&\quad\text{if}\quad N<p^2
		\end{cases}
	\end{eqnarray}
	for some constants $c_1,c_2>0$  (see \cite{Perera-Zou2018}). We also have the following estimate, by \cite[Lemma A5]{Garcia-Azorero-Peral1994} (see also \cite[Lemma 3.1]{doO-Gloss-Ribeiro2020} and \cite[Lemma 11.1]{MR1695021}):
	\begin{eqnarray}\label{estimate3}
		\int_{\R^N} z_{\varepsilon,\delta}^qdx\geq \widetilde{C}\varepsilon^{\seq{\frac{p-1}{p}}\seq{N-\frac{q(N-p)}{p}}}&\quad\text{if}\quad q>p^*\seq{1-\frac{1}{p}}.
	\end{eqnarray}
	Recall that $\pi(w)= w/\|w\|$ for $w\in W\backslash\{0\}$ and let
	\begin{eqnarray}\label{a}
		e_{\varepsilon,\delta}=\pi(s_0z_{\varepsilon,\delta},t_0z_{\varepsilon,\delta})=(s_0\w{z}_{\varepsilon,\delta},t_0\w{z}_{\varepsilon,\delta})
	\end{eqnarray}
	where $(s_0,t_0)\in\R^2$ satisfies \eqref{bc2} and
	\begin{eqnarray}\label{b}
		\w{z}_{\varepsilon,\delta}=\dfrac{z_{\varepsilon,\delta}}{\|\nabla z_{\varepsilon,\delta}\|_{p}}.
	\end{eqnarray}
	Assuming $\lambda_k\le\lambda<\lambda_{k+1}$, due to Proposition \ref{Proposition 2} we know that there exists 
	$\mathcal{C}\subset{\Psi}^{\lambda_k}$ a compact symmetric subset of index $k$ that is bounded in $(L^\infty(\Omega)\times L^\infty(\Omega))\cap (C_{\text{loc}}^1(\Omega)\times C_{\text{loc}}^1(\Omega))$. Let $\ell:[0,\infty)\to[0,1]$ be a smooth function such that 
	$$
	\ell(t)=0\quad\text{for}\,\,t\leq 3/4\quad\text{and}\quad \ell(t)=1\quad\text{for}\,\,t\geq1.
	$$ 
	For $0<\delta\leq\delta_0/2$ we denote
	\begin{equation*}
		\overline{w}_\delta=\seq{\ell(|x|/\delta)u(x),\ell(|x|/\delta)v(x)},\quad w=(u,v)\in \mathcal{C},
	\end{equation*}
	and define
	\begin{equation}\label{C_delta def}
		C_\delta=\set{\pi(\overline{w}_\delta): w\in \mathcal{C}}.
	\end{equation}
	
	Using similar arguments as in Bisci et al. \cite[Lemma 4.1]{Bisci&all-2025}, due to the $p$-homogeneity and continuity of $F$ and the regularity of functions in $\mathcal{C}$ given in Proposition \ref{Proposition 2}, we can verify that $C_\delta$ has the following properties.
	\begin{lemma}\label{lemma1}
		Assume \ref{F1}-\ref{F3}.
		There exists $\delta_1\in(0,\delta_0/2]$  such that for all $0<\delta\leq\delta_1$,
		\begin{enumroman}
			\item\label{lemma1-1} $C_\delta$ is a compact symmetric subset of ${\Psi}^{\lambda_k+c_3\delta^{N-p}}$ for some constant $c_3>0$;
			\item\label{lemma1-2} $\lambda_k+c_3\delta^{N-p}<\lambda_{k+1}$ and $C_\delta\cap{\Psi}_{\lambda_{k+1}}=\emptyset$;
			\item\label{lemma1-3} $i(C_\delta)=k$;
			\item\label{lemma1-4} $e_{\varepsilon,\delta}\in\M\backslash{C_\delta}$, for all $\varepsilon>0$;
		\end{enumroman}
	\end{lemma}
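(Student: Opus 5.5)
The plan is to follow the scalar argument of Bisci et al. \cite[Lemma 4.1]{Bisci&all-2025}, adapted to pairs. Everything rests on the uniform bounds from Proposition \ref{Proposition 2}: there is $M>0$ with $|u|,|v|\le M$ and $|\nabla u|,|\nabla v|\le M$ on $B_{\delta_0/2}(0)$ for all $w=(u,v)\in\mathcal C$. The truncation $\ell(|x|/\delta)$ differs from $1$ only on $B_\delta(0)$, and there $|\nabla(\ell(|x|/\delta)u)|\le |\nabla u|+\|\ell'\|_\infty M/\delta$.

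\textbf{Step 1: the cutoff estimates.} I first show that
\[
\norm{\overline w_\delta-w}^p\le C\int_{B_\delta}\left(|\nabla u|^p+|\nabla v|^p+\delta^{-p}(|u|^p+|v|^p)\right)dx\le C\delta^{N-p}
\]
and that $\int_\Omega F(\overline w_\delta)\,dx\ge \int_\Omega F(w)\,dx - C\delta^N$, using \eqref{202} and the $L^\infty$ bound. Writing $\norm{\overline w_\delta}\le 1+C\delta^{(N-p)/p}$ is too crude, since after raising to the power $p$ it gives only an error of order $\delta^{(N-p)/p}$. To get $\delta^{N-p}$ I instead compute $\norm{\overline w_\delta}^p$ directly. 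Outside $B_\delta$ the gradients coincide, so
\[
\norm{\overline w_\delta}^p=\norm{w}^p-\int_{B_\delta}(|\nabla u|^p+|\nabla v|^p)+\int_{B_\delta}(|\nabla \overline u_\delta|^p+|\nabla\overline v_\delta|^p)\le 1+C\delta^{N-p}.
\]
Since $F$ is $p$-homogeneous,
\[
\Psi(\pi(\overline w_\delta))=\frac{\norm{\overline w_\delta}^p}{p\int F(\overline w_\delta)},
\]
and combining the two estimates gives $\Psi(\pi(\overline w_\delta))\le (1+C\delta^{N-p})/(1/\lambda_k-C\delta^N)\le \lambda_k+c_3\delta^{N-p}$. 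Here I use that $p\int F(w)\ge 1/\lambda_k$ on $\mathcal C$ and that $p\int F(w)$ is bounded below by $1/\lambda_k>0$, which also guarantees $\overline w_\delta\ne0$. This proves (i), except for compactness and symmetry. Symmetry is immediate because the map $w\mapsto\pi(\overline w_\delta)$ is odd. Compactness follows because this map is continuous on $\mathcal C$ (multiplication by a fixed smooth cutoff is continuous in $W$ and $\pi$ is continuous away from $0$) and $\mathcal C$ is compact.

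\textbf{Step 2: parts (ii) and (iii).} Choose $\delta_1$ small so that $\lambda_k+c_3\delta^{N-p}<\lambda_{k+1}$; this is possible because $\lambda_k<\lambda_{k+1}$, which is assumed in Proposition \ref{Proposition 2}. Then $C_\delta\subset\Psi^{\lambda_k+c_3\delta^{N-p}}$ is disjoint from $\Psi_{\lambda_{k+1}}$, which is (ii).

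For (iii), monotonicity of the index under the odd continuous map gives $i(C_\delta)\ge i(\mathcal C)=k$. Conversely, $C_\delta\subset\M\setminus\Psi_{\lambda_{k+1}}$, and by Theorem \ref{Theorem 4}\ref{Theorem 4.iii} (applied with some $\lambda\in(\lambda_k,\lambda_{k+1})$) this set has index $k$, so $i(C_\delta)\le k$.

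\textbf{Step 3: part (iv).} The element $e_{\varepsilon,\delta}\in\M$ is supported in $\overline{B_{\delta/2}}$, while every element of $C_\delta$ vanishes on $B_{3\delta/4}$. Hence $e_{\varepsilon,\delta}\in C_\delta$ would force $e_{\varepsilon,\delta}=0$, which is impossible since $e_{\varepsilon,\delta}\in\M$.

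\textbf{Main obstacle.} The delicate point is obtaining the sharp rate $\delta^{N-p}$ in (i). This needs the uniform $C^1_{\rm loc}$ and $L^\infty$ bounds from Proposition \ref{Proposition 2}, applied simultaneously to both components of $w\in\mathcal C$, and it requires the direct computation of $\norm{\overline w_\delta}^p$ above rather than a triangle-inequality estimate.
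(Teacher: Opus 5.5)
Your proposal is correct and takes essentially the route the paper indicates. The paper gives no detailed proof; it only points to the scalar argument of Bisci et al., together with the $p$-homogeneity of $F$ and the $L^\infty\cap C^1_{\rm loc}$ bounds on $\mathcal C$ from Proposition~\ref{Proposition 2}. Your direct computation of $\norm{\overline w_\delta}^p$ (rather than a triangle-inequality estimate), the index sandwich $k=i(\mathcal C)\le i(C_\delta)\le i(\M\setminus\Psi_{\lambda_{k+1}})=k$, and the disjoint-support argument for (iv) supply exactly those omitted details.
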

	These conditions on $C_\delta$ allow us to get a uniform estimate for $E_0$, when $\delta\in(0,\delta_1]$.
	\begin{lemma}\label{lemma1-5}
		Assume \ref{F1}-\ref{F3}, \ref{G subcri}-\ref{AR G} and \ref{H1}-\ref{H2}. Then,  there is $R>0$ such that 
		$$E_0(R\widetilde{w})\leq0,\quad\forall \widetilde{w}\in A_\delta:=\left\{\pi((1-t)w+te_{\varepsilon,\delta}):\, w\in C_\delta,\, t\in[0,1]\right\}$$
		for 
		all $0<\delta\leq\delta_1$ and  $\varepsilon>0$ such that $\varepsilon^\frac{N-p}{p}\delta^{-\frac{N-p}{p-1}}\le 1$.
	\end{lemma}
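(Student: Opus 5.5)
The approach is to discard the nonnegative terms $\lambda\int F$ and $\int G$ and control $E_0(R\widetilde w)$ by the critical term alone. Since every $\widetilde w\in A_\delta$ lies on $\M$, $\|\widetilde w\|=1$. Using $F\ge 0$ (by \ref{F2}) and $G\ge0$, together with \ref{H1}--\ref{H2}, we get for $\widetilde w=(\widetilde u,\widetilde v)$
\[
E_0(R\widetilde w)\le \frac{R^p}{p}-R^{p^*}\om H(\widetilde u,\widetilde v)\,dx\le \frac{R^p}{p}-c_H R^{p^*}\om\seq{|\widetilde u|^{p^*}+|\widetilde v|^{p^*}}dx .
\]
So it suffices to prove a uniform lower bound
\[
\om\seq{|\widetilde u|^{p^*}+|\widetilde v|^{p^*}}dx\ge \kappa>0
\]
for all $\widetilde w\in A_\delta$, all $\delta\in(0,\delta_1]$, and all admissible $\varepsilon$. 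Then any $R$ with $R^{p^*-p}\ge 1/(p c_H\kappa)$ works.

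To get this bound, I will use the $L^p$ norm instead, since $\|\cdot\|_{p^*}\ge |\Omega|^{-(p^*-p)/(p p^*)}\|\cdot\|_p$. It is then enough to show that $\om F(\widetilde u,\widetilde v)$ is bounded below, because $F\le C_F(|s|^p+|t|^p)$ by \eqref{202}. Write $\widetilde w=\pi((1-t)w+te_{\varepsilon,\delta})$ with $w\in C_\delta$ and $\|(1-t)w+te\|\le 1$. The key observation is a support separation: $w\in C_\delta$ vanishes on $B_{3\delta/4}$, while $e_{\varepsilon,\delta}$ is supported in $B_{\delta/2}$. Hence $F((1-t)w+te)=F((1-t)w)+F(te)$ pointwise, and by $p$-homogeneity
\[
\om F((1-t)w+te)=(1-t)^p\om F(w)+t^p\om F(e).
\]
Dividing by $\|(1-t)w+te\|^p\le1$ gives $\om F(\widetilde w)\ge (1-t)^p\om F(w)+t^p\om F(e)$.

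Next I bound each piece separately. For $w\in C_\delta\subset\Psi^{\lambda_k+c_3\delta^{N-p}}$ (Lemma~\ref{lemma1}), we have $p\om F(w)\ge 1/(\lambda_k+c_3\delta_1^{N-p})$, which is uniform in $\delta$. For $e=e_{\varepsilon,\delta}$, I use \ref{F2} and \eqref{estimate1} directly on the $L^{p^*}$ norm:
\[
\om |s_0\widetilde z|^{p^*}+|t_0\widetilde z|^{p^*}=\seq{|s_0|^{p^*}+|t_0|^{p^*}}\|\nabla z_{\varepsilon,\delta}\|_p^{-p^*}\ge c\,(S+c_1)^{-p^*/p}.
\]
Here the condition $\varepsilon^{(N-p)/p}\delta^{-(N-p)/(p-1)}\le1$ is exactly what makes this uniform. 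It is cleaner to run the whole argument with $L^{p^*}$ and $H$ rather than $F$. However, $H$ is not additive, so I use $H\ge c_H(|s|^{p^*}+|t|^{p^*})$, which is additive over disjoint supports. For the $w$-part, I use $\|w\|_{p^*}\gtrsim\|w\|_p\gtrsim (\om F(w))^{1/p}$, which is bounded below.

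Combining the two pieces gives $\om(|\widetilde u|^{p^*}+|\widetilde v|^{p^*})\ge (1-t)^{p^*}a+t^{p^*}b\ge \min(a,b)\,2^{1-p^*}$, with $a,b>0$ uniform. This yields $\kappa$.

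The main obstacle I expect is the normalization step. I need the upper bound $\|(1-t)w+te\|\le 1$, which follows from disjoint supports: the norm to the $p$th power splits as $(1-t)^p+t^p\le1$, since $\|w\|=\|e\|=1$ and the gradients also have disjoint supports. I also need the uniformity of the lower bounds in $\delta$ and $\varepsilon$, which is where Lemma~\ref{lemma1}\ref{lemma1-1} and the hypothesis on $\varepsilon$ enter.
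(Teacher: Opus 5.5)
Your argument is correct and is essentially the paper's proof. You drop $\lambda\int_\Omega F$ and $\int_\Omega G$, reduce to a uniform lower bound on $\int_\Omega(|\widetilde u|^{p^*}+|\widetilde v|^{p^*})\,dx$, and get that bound from three ingredients: the disjoint supports of $w\in C_\delta$ and $e_{\varepsilon,\delta}$, the lower bound on $\int_\Omega F(w)$ supplied by Lemma~\ref{lemma1}, and \eqref{estimate1} under $\varepsilon^{(N-p)/p}\delta^{-(N-p)/(p-1)}\le 1$ for the bubble part. The differences are cosmetic: you absorb the normalization via $\|(1-t)w+te_{\varepsilon,\delta}\|^p=(1-t)^p+t^p\le 1$ rather than the paper's constant $c_0$, and you use $\Psi\le\lambda_k+c_3\delta_1^{N-p}$ rather than $\Psi<\lambda_{k+1}$. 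Your initial idea of bounding $\int_\Omega F(\widetilde w)$ from below could not be uniform on the bubble part, since $\int_\Omega z_{\varepsilon,\delta}^p\,dx\to0$, so you were right to switch to the $L^{p^*}$ splitting.
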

	\begin{proof}
		Let $\widetilde{w} = \pi((1 - t)w + t e_{\varepsilon,\delta})\in A_\delta$, with $\widetilde{w} = (\widetilde{u}, \widetilde{v})$. Since $Q=H+G\ge H$ and $H$ satisfies \ref{H2}, we have
		\begin{align}\label{item v geom}
			E_0(R\widetilde{w})&=\dfrac{R^p}{p}\om\seq{|\nabla \w{u}|^p+|\nabla \w{v}|^p}dx-\lambda R^p\om F(\widetilde{u}, \widetilde{v})dx-\om Q(R\widetilde{u}, R\widetilde{v})dx\nonumber\\
			&\leq \frac{R^p}{p}  
			- c_H{R^{p^{\ast}}} \int_\Omega \left(|\widetilde{u}|^{p^{\ast}}+|\widetilde{v}|^{p^{\ast}}\right)dx.
		\end{align}
		Since $e_{\varepsilon,\delta}$ and $w\in\mathcal{M}$  and have disjoint support for all $w\in C_\delta$, we can see that
		\begin{align}\label{tilde v inf}
			\int_\Omega\left(|\widetilde{u}|^{p^{\ast}}+|\widetilde{v}|^{p^{\ast}}\right)dx \geq c_0
			\min\left\{\int_\Omega( |u|^{p^{\ast}}+ |v|^{p^{\ast}})dx;\int_\Omega |\widetilde{z}_{\varepsilon,\delta}|^{p^{\ast}}dx\right\}
		\end{align}
		for
		\[
		c_0=(|s_0|^{p^*}+|t_0|^{p^*})\min_{t\in[0,1]}\frac{(1-t)^{p^{\ast}}+t^{p^{\ast}}}{\left[(1-t)^p+ t^p\right]^{{p^{\ast}}/p}}>0.
		\]
		Due to  \eqref{estimate1} and \eqref{b} we see that, for $\varepsilon^\frac{N-p}{p}\delta^{-\frac{N-p}{p-1}}\le 1$, there exists $\widetilde{c}_1=\widetilde{c}_1(N,p,\Omega)>0$ such that
		\begin{equation}\label{z control}
			\int_\Omega |\widetilde{z}_{\varepsilon,\delta}|^{p^{\ast}}\,dx\geq \widetilde{c}_1.
		\end{equation}
		On the other hand, by Lemma \ref{lemma1} \ref{lemma1-2} we have $C_\delta\subset \mathcal{M}\backslash{\Psi}_{\lambda_{k+1}}$, which implies
		\[
		p\int_\Omega F(u,v)\,dx=\frac{1}{\Psi(u,v)}> \frac{1}{\lambda_{k+1}},\quad\forall (u,v)\in C_\delta.
		\]
		Recalling that $F$ satisfies \eqref{202}, we see that
		\[
		p\int_\Omega F(u,v)\,dx\leq C\left(\int_\Omega |u|^{p^{\ast}}\,dx\right)^\frac{p}{p^{\ast}}+C\left(\int_\Omega |v|^{p^{\ast}}\,dx\right)^\frac{p}{p^{\ast}},\quad\forall (u,v)\in C_\delta
		\]
		with $C=C(N,p,\Omega,C_F)>0$.
		Then, there is a positive constant $\widetilde{c}_2$, which does not depend on $\delta$ nor $\varepsilon$, such that
		\begin{equation}\label{v inf Ad}
			\int_\Omega \left(|u|^{p^{\ast}}+|v|^{p^{\ast}}\right) dx\ge \widetilde{c}_2,\quad\forall (u,v)\in C_\delta. 
		\end{equation}
		Thus, \eqref{tilde v inf}, \eqref{z control} and \eqref{v inf Ad} ensure that 
		\begin{equation*}%\label{tilde v bounded bellow}
			\int_\Omega\left(|\tilde u|^{p^{\ast}}+|\tilde v|^{p^{\ast}}\right)dx\geq \widetilde{c}_3
		\end{equation*} 
		for a constant $\widetilde{c}_3>0$ independent of $R$ and $\widetilde{w}$.
		Therefore, from \eqref{item v geom} we obtain
		\begin{equation*}
			E_0(R\widetilde w) \le \frac{R^p}{p} - \widetilde{c}_3{R^{p^{\ast}}}<0
		\end{equation*}
		for $R>0$ large enough. This completes the proof.
	\end{proof}
	
	%%%%%%%%%%%%%%%%%%%%%%%%%%%%%%%%%%%%%%%%%%%%%%%%%%%%%%%%%%%%%%%%%%%%%%%%%%%%%%%%%%%%%%%%%%%%%
	
	In order to verify the geometry of Theorem \ref{lk th}, let us consider
	\[
	A_0 = C_\delta, \quad B_0 = \Psi_{\lambda_{k+1}}, \quad e = e_{\varepsilon,\delta},
	\]
	as in Lemma \ref{lemma1}, with $\delta\in(0,\delta_1)$ to be chosen later. By Lemma \ref{lemma1}, $A_0$ is a compact symmetric subset of $\Psi^{\lambda_k+c_3\delta^{N-p}}$, $A_0\subset \mathcal{M} \setminus \Psi_{\lambda_{k+1}}$, $i(A_0)=k$ and $e \in \mathcal{M} \setminus A_0$. Due to Theorem \ref{Theorem 4} \ref{Theorem 4.iii} we have
	\(
	i(\mathcal{M} \setminus B_0) = i(\mathcal{M} \setminus \Psi_{\lambda_{k+1}}) = k.
	\)
	Thus,
	\begin{equation}\label{same index}
		i(A_0)=i(\mathcal{M} \setminus B_0).
	\end{equation}
	For this $A_0$, $R > 0$ as given in Lemma \ref{lemma1-5} and $\rho\in(0,R)$ we define
	\[
	A = \{ t w : w \in A_0, \, t \in [0,R] \} \cup \{ R \pi((1 - t)w + t e) : w \in A_0,\, t \in [0,1] \},
	\]
	\begin{align}\label{X def}
		B = \{ \rho w : w \in B_0 \},
		\quad\text{and}\quad
		X = \{ t w : w \in A, \, \|w\| = R,\, t \in [0,1] \}.
	\end{align}
	Since \ref{G subcri} holds, there exist $C>0$ and $q_1,q_2\in(p,p^*)$ such that
	\[
	0\leq G(s,t)\leq C(|s|^{q_1}+|s|^{q_2}+|t|^{q_1}+|t|^{q_2})\quad\forall s,t\in\R.
	\]
	Then, recalling that $p \int_\Omega F(w)\,dx=1/\Psi(w)$ for $w\in\mathcal{M}$, for $w=(u,v) \in B_0$ it holds
	\begin{align}\label{E(tw) M}
		E_0(tw) &= \frac{t^p}{p} \left( 1 - \frac{\lambda}{\Psi(w)} \right) - \int_\Omega G(tu,tv)\,dx-t^{p^*}\int_\Omega H(u,v)\,dx\nonumber\\
		&\geq\frac{t^p}{p} \left( 1 - \frac{\lambda}{\lambda_{k+1}} \right) - c_Ht^{p^{\ast}} \int_\Omega (|u|^{p^{\ast}}+|v|^{p^{\ast}})dx\nonumber\\
		&\quad- Ct^{q_1}\int_\Omega (|u|^{q_1}+|v|^{q_1})dx-Ct^{q_2}\int_\Omega (|u|^{q_2}+|v|^{q_2})dx.
	\end{align}
	Due to the boundedness of $\mathcal{M}$, it is easy to see that  there is $\rho> 0$ small enough such that
	\[
	\inf_{w \in B} E_0(w)= \inf_{w \in B_0} E_0(\rho w)> 0.
	\]
	Let $w \in A_0$. Since $ A_0\subset\Psi^{\lambda_k+c_3\delta^{N-p}}$ and $\lambda_k \leq \lambda$, we have
	\begin{align*}%\label{sup in tA0}
		E_0(tw) &= \frac{t^p}{p} \left( 1 - \frac{\lambda}{\Psi(w)} \right) - \int_\Omega G(tu,tv)\,dx-t^{p^*}\int_\Omega H(u,v)\,dx%\leq \frac{t^p}{p} \left( 1 - \frac{\lambda}{\lambda_k + c_3 \delta^{N-p}} \right)  
		\leq \frac{R^p}{p} \cdot \frac{c_3 \delta^{N-p}}{\lambda_k },
	\end{align*}
	for all $t \in [0, R]$.
	Therefore, if $\delta> 0$ is small enough, then
	\begin{equation}\label{supA0<infB}
		\sup_{w \in A_0,\,t\in[0,R]} E_0(tw) < \inf_{w \in B} E_0(w).
	\end{equation}
	Joining this information with Lemma \ref{lemma1-5}  we conclude that
	\begin{equation}\label{supA<infB}
		\sup_{w \in A} E_0(w) < \inf_{w \in B} E_0(w).
	\end{equation}
	It remains to estimate $E$ on $X$. Recalling that $w$ and $e=e_{\varepsilon,\delta}$ have disjoint support for all $w\in A_0$,  we have
	\begin{align}\label{sup on X}
		\sup_{w\in X} E_0(w) = \sup_{w \in A_0,\, s,t \geq 0} E_0(sw + t e_{\varepsilon,\delta}) 
		= \sup_{w \in A_0,\,s \geq 0} E_0(sw) + \sup_{t \geq 0} E_0(t e_{\varepsilon,\delta}). 
	\end{align}
	
	In the next lemmas we will estimate these two suprema on the right side of \eqref{sup on X}.

	%%%%%%%%%%%%%%%%%%%%%%%%%%%%%%%%%%%%%%%%%%%%%%%%%%%%%%%%%%%%%%%%%%%%%%%%%%%%%
	
	\begin{lemma}\label{lemma2-1}
		Assume \ref{F1}-\ref{F3}, \ref{G subcri}, \ref{AR G} and \ref{H1}-\ref{H3}. Then we have
		\begin{eqnarray*}
			\sup_{w\in C_\delta,s\geq0}E_0(sw)\leq\begin{cases}
				0&\quad\text{if}\quad\lambda_k+c_3\delta^{N-p}\leq\lambda<\lambda_{k+1},\\
				c_6\delta^{N(N-p)/p}&\quad\text{if}\quad\lambda=\lambda_k,
			\end{cases}
		\end{eqnarray*}
		where $c_3$ is as in Lemma \ref{lemma1} and $c_6$ is a positive constant.
	\end{lemma}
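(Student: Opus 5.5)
For $w=(u,v)\in C_\delta\subset\mathcal M$ and $s\ge 0$ I would start from the identity already used in \eqref{E(tw) M}. Since $p\int_\Omega F(u,v)\,dx=1/\Psi(w)$ and $\|w\|=1$, it reads
\[
E_0(sw)=\frac{s^p}{p}\Bigl(1-\frac{\lambda}{\Psi(w)}\Bigr)-\int_\Omega G(su,sv)\,dx-s^{p^*}\int_\Omega H(u,v)\,dx .
\]
The plan is to drop the $G$-term, which is allowed because $G\ge 0$. After that only the sign and size of the quadratic-type coefficient $1-\lambda/\Psi(w)$ matter, and the size is controlled through Lemma \ref{lemma1}\ref{lemma1-1}.

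\emph{First case.} By Lemma \ref{lemma1}\ref{lemma1-1}, $\Psi(w)\le \lambda_k+c_3\delta^{N-p}$ for every $w\in C_\delta$. Hence if $\lambda\ge \lambda_k+c_3\delta^{N-p}$, then $1-\lambda/\Psi(w)\le 0$. Since also $H\ge 0$, every term of $E_0(sw)$ is nonpositive, and the supremum is $\le 0$.

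\emph{Second case, $\lambda=\lambda_k$.} Here I would use $\Psi(w)\ge\lambda_1>0$ together with Lemma \ref{lemma1}\ref{lemma1-1} to write
\[
1-\frac{\lambda_k}{\Psi(w)}=\frac{\Psi(w)-\lambda_k}{\Psi(w)}\le \frac{c_3\delta^{N-p}}{\lambda_k}=:a_\delta .
\]
For the critical term I would invoke \ref{H2} and the uniform bound \eqref{v inf Ad} from the proof of Lemma \ref{lemma1-5}. That bound was derived using only $C_\delta\subset\mathcal M\setminus\Psi_{\lambda_{k+1}}$ and the Sobolev embedding, so it holds uniformly in $\delta\in(0,\delta_1]$ and gives
\[
\int_\Omega H(u,v)\,dx\ge c_H\widetilde c_2=:b>0 ,
\]
with $b$ independent of $\delta$. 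Consequently $E_0(sw)\le \frac{a_\delta}{p}s^p-b\,s^{p^*}$ for all $s\ge0$. An elementary maximization in $s$ (the critical point satisfies $s^{p^*-p}=a_\delta/(p^*b)$, and $p/(p^*-p)=(N-p)/p$) gives
\[
\sup_{s\ge0}\Bigl(\frac{a_\delta}{p}s^p-b s^{p^*}\Bigr)=\frac{a_\delta}{N}\Bigl(\frac{a_\delta}{p^*b}\Bigr)^{(N-p)/p}=C\,a_\delta^{N/p}=c_6\,\delta^{N(N-p)/p},
\]
which is the claimed bound.

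The only delicate point is making sure that $b$ does not deteriorate as $\delta\to0$, i.e.\ that the cut-off functions $\pi(\overline w_\delta)$ keep a uniform lower bound on $\int_\Omega(|u|^{p^*}+|v|^{p^*})$. This is exactly what \eqref{v inf Ad} provides via Lemma \ref{lemma1}\ref{lemma1-2}, so I expect no real obstacle beyond checking that dependence. Note that \ref{H3} and \ref{AR G} are not really needed here beyond the nonnegativity of $G$ and $H$.
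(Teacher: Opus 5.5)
Your proposal is correct and follows essentially the same route as the paper's proof. You drop the nonnegative $G$-term and bound $1-\lambda/\Psi(w)$ using $C_\delta\subset\Psi^{\lambda_k+c_3\delta^{N-p}}$. In the case $\lambda=\lambda_k$ you combine the coefficient bound $c_3\delta^{N-p}/\lambda_k$ with the $\delta$-uniform lower bound on $\int_\Omega H(u,v)\,dx$ coming from \ref{H2} and \eqref{v inf Ad}, then maximize in $s$. Your explicit computation of that maximum, giving $a_\delta^{N/p}\sim\delta^{N(N-p)/p}$, simply fills in a step the paper leaves implicit.
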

	\begin{proof}
		For $w=(u,v)\in C_\delta$, and $s\geq0$, since $G\ge0$ we have
		\begin{eqnarray}\label{g2}
			E_0(sw)&=&\dfrac{s^p}{p}\seq{\om\seq{|\nabla u|^p+|\nabla v|^p}dx-p\lambda\om F(u,v)dx}-\om Q(su,sv)dx\nonumber\\
			&\leq&\dfrac{s^p}{p}\seq{1-\dfrac{\lambda}{\Psi(w)}}-s^{p^*}\om H(u,v)dx\nonumber\\
			&\leq&\dfrac{s^p}{p}\seq{1-\dfrac{\lambda}{\lambda_k+c_3\delta^{N-p}}}-s^{p^*}\om H(u,v)dx
		\end{eqnarray}
		once $C_\delta\subset\Psi^{\lambda_k+c_3\delta^{N-p}}$ by Lemma \ref{lemma1} \ref{lemma1-1}. So $E_0(sw)\leq0$ if $\lambda_k+c_3\delta^{N-p}\leq\lambda<\lambda_{k+1}$. Now if $\lambda_k=\lambda$, then
		\[
		1-\dfrac{\lambda}{\lambda_k+c_3\delta^{N-p}}=\dfrac{c_3\delta^{N-p}}{\lambda_k+c_3\delta^{N-p}}\leq c_4\delta^{N-p},
		\]
		where $c_4=c_3/\lambda_k>0$. For a  constant  $c_5>0$ independent of $\delta$ we also have 
		\[
		\om H(u,v)dx\geq c_5\quad\forall (u,v)\in C_\delta,
		\]
		since $H$ satisfies \ref{H2} and \eqref{v inf Ad} occurs. From this and $\eqref{g2}$ we get
		\begin{eqnarray}\label{g22}
			E_0(sw)\leq c_4\delta^{N-p}\dfrac{s^p}{p}-c_5s^{p^*}\leq c_6\delta^{N(N-p)/p}, \quad\forall w\in C_\delta, \forall s\geq0.
		\end{eqnarray}
		This  completes the proof.
	\end{proof}
	
	Now, we are going to estimate the second supremum on the right side of \eqref{sup on X}. Considering $\delta_1>0$ smaller than the one given  in Lemma \ref{lemma1}, and satisfying \eqref{supA0<infB}, if necessary, we may assume  that $c_6\delta^{N(N-p)/p}<\frac{1}{2N}S_H^{N/p}$ for all $\delta\in(0,\delta_1]$, where $S_H$ is given in \eqref{best constant}.
	We now have two possibilities: 
	$$
	\sup_{t\geq0}E_0(t\e)<\frac{1}{2N}S_H^{N/p}\quad\text{or}\quad \sup_{t\geq0}E_0(t\e)\geq\frac{1}{2N}S_H^{N/p}.
	$$
	If the former happens, this together with \eqref{g22} and with the choice of $\delta$ give both inequalities \eqref{lk th2} and \eqref{eq:2.9} and this shows that $\sup_{w\in X}E_0(w)<\frac{1}{N}S_H^{N/p}$ and Theorem \ref{lk th} can be employed to prove Theorem \ref{Theorem 1}. Theorem \ref{thm:2.1} can also be employed in this case to prove Theorem \ref{Theorem 5}, where $C_\delta$ is given in \eqref{C_delta def} and $R>0$ is given by Lemma \ref{lemma1-5}.
	
	From now on we suppose  that 
	\begin{equation}\label{sup large}
		\sup_{t\geq0}E_0(t\e)\geq\frac{1}{2N}S_H^{N/p}.
	\end{equation}
	By \ref{G subcri}-\ref{AR G}, \eqref{estimate1} and \eqref{estimate2}, one can easily deduce that $\lim_{t\to0^+}E_0(t\e)=0$ uniformly on  $0<\delta\leq\delta_1$ and  $\varepsilon>0$ such that $\varepsilon^\frac{N-p}{p}\delta^{-\frac{N-p}{p-1}}\le 1$. Therefore there exists $t^*>0$ such that 
	\begin{equation*}%\label{small t}
		E_0(t\e)<\frac{1}{2N}S_H^{N/p}\quad\forall t\in[0,t^*],
	\end{equation*}
	for all $0<\delta\leq\delta_1$ and  $\varepsilon>0$ small such that $\varepsilon^\frac{N-p}{p}\delta^{-\frac{N-p}{p-1}}\le 1$. On the other hand, for each  $\varepsilon,\delta>0$ it holds $\lim_{t\to\infty}E_0(t\e)=-\infty$.
	Consequently, given $\varepsilon,\delta>0$ small, there exists $\widetilde{t}=\widetilde{t}(\varepsilon,\delta)>0$ such that 
	\begin{equation}\label{sup tilde t}
		\sup_{t\geq0}E_0(t\e)=E_0(\widetilde{t}\e),\quad\text{with}\quad\widetilde{t}\ge t^*.
	\end{equation}

	\begin{lemma}\label{lemma3}
		Assume \ref{F1}-\ref{F2}, \ref{G subcri}-\ref{AR G} and \ref{H1}-\ref{H3}. Suppose also \eqref{sup large}. Then:
		\begin{enumroman}
			\item\label{lemma3-1}  It holds
			\begin{eqnarray*}
				\sup_{t\geq0}E_0(t\e)\leq\begin{cases}
					\dfrac{1}{N}\seq{S_H+c_8\varepsilon^{(N-p)/p}\delta^{-(N-p)/(p-1)}-\lambda c_2\varepsilon^{p-1}}^{N/p}&\quad\text{if}\quad N>p^2,\vspace{0.2cm}\\
					\dfrac{1}{N}\seq{S_H+c_8\varepsilon^{p-1}\delta^{-p}-\lambda c_9\varepsilon^{p-1}|\log(\varepsilon\delta^{-p/(p-1)})|}^{N/p}&\quad\text{if}\quad N=p^2;
				\end{cases}
			\end{eqnarray*}
			\item\label{lemma3-2}   If either $p<N<p^2$ and \ref{G small N nonresonant}, or  if $N^2/(N+1)\leq p^2$ and \ref{G small N} hold we obtain
			\begin{eqnarray*}
				\sup_{t\geq0}E_0(t\e)\leq \dfrac{1}{N}\seq{S_H+c_8\varepsilon^{\frac{N-p}{p}}\delta^{-\frac{N-p}{p-1}}}^{N/p}-c_{10}\varepsilon^{\seq{\frac{p-1}{p}}\seq{N-\frac{q(N-p)}{p}}},%&\text{if}\quad q>p^*\seq{1-\frac{1}{p}}
			\end{eqnarray*}
		\end{enumroman}
		whenever $\delta\in(0,\delta_1)$ and $\varepsilon>0$ satisfy $\varepsilon^\frac{N-p}{p}\delta^{-\frac{N-p}{p-1}}\le 1$, where $c_8,c_9,c_{10}>0$ are constants.
	\end{lemma}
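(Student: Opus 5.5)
We evaluate $E_0$ along the ray $t\e$, with $\e=(s_0\w{z}_{\varepsilon,\delta},t_0\w{z}_{\varepsilon,\delta})$, and reduce everything to the one-dimensional maximization
\[
\max_{t\ge0}\Big(\frac{a}{p}t^p-\frac{b}{p^*}t^{p^*}\Big)=\frac1N\,(a\,b^{-p/p^*})^{N/p}.
\]
Write $z=z_{\varepsilon,\delta}$ and $Z=\|\nabla z\|_p^p$, so that $\w z=z/Z^{1/p}$. By \eqref{bc2} we have $|s_0|^p+|t_0|^p=1$, hence $\|\e\|=1$. By \ref{H1}, \eqref{bc2} and $\om z^{p^*}=1$,
\[
p^*\om H(s_0\w z,t_0\w z)\,dx=M_H^{p^*/p}\,Z^{-p^*/p}.
\]
By \ref{F2} we have $F(s_0\w z,t_0\w z)\ge c_F\,\w z^{\,p}$, so $p\om F(\e)\ge pc_F\om z^p/Z$.

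\textbf{Step 1: part \ref{lemma3-1}.} Using only $G\ge0$,
\[
E_0(t\e)\le \frac{t^p}{p}\Big(1-\lambda pc_F\frac{\om z^p}{Z}\Big)-\frac{t^{p^*}}{p^*}M_H^{p^*/p}Z^{-p^*/p}.
\]
Maximizing in $t$ gives
\[
\sup_{t\ge0} E_0(t\e)\le\frac1N\Big[M_H^{-1}\Big(Z-\lambda pc_F\om z^p\Big)\Big]^{N/p}.
\]
Now apply \eqref{estimate1} and \eqref{estimate2}, together with Lemma \ref{bc3} ($M_H^{-1}S=S_H$). This yields the bracket $S_H+c_8\varepsilon^{(N-p)/p}\delta^{-(N-p)/(p-1)}-\lambda c_2'\varepsilon^{p-1}$ for $N>p^2$. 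For $N=p^2$ we have $(N-p)/p=p-1$ and $(N-p)/(p-1)=p$, which gives the second line. The only point requiring care is that the bracket stays positive, which holds since $Z\ge S$ and $\om z^p$ is small. In this part \eqref{sup large} is not even needed.

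\textbf{Step 2: part \ref{lemma3-2}.} Here we discard the $\lambda F$ term (it is $\ge0$) and instead keep $G$ via \ref{G small N} or \ref{G small N nonresonant}:
\[
\om G(t\e)\ge c_G t^q(|s_0|^q+|t_0|^q)\,Z^{-q/p}\om z^q.
\]
This is where \eqref{sup large} is used. By \eqref{sup tilde t} the supremum is attained at some $\widetilde t\ge t^*$. Moreover $\widetilde t$ is bounded above uniformly, since $E_0(t\e)\le t^p/p-c\,t^{p^*}$ with $c$ bounded below (because $Z\le S+c_1$). Also $Z\le S+c_1$ bounds $Z^{-q/p}$ from below. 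Therefore
\[
\sup_{t\ge0} E_0(t\e)\le \sup_{t\ge0}\Big(\frac{t^p}{p}-\frac{t^{p^*}}{p^*}M_H^{p^*/p}Z^{-p^*/p}\Big)-c\,(t^*)^q\om z^q.
\]
The first term is $\frac1N(M_H^{-1}Z)^{N/p}\le\frac1N\big(S_H+c_8\varepsilon^{(N-p)/p}\delta^{-(N-p)/(p-1)}\big)^{N/p}$. For the second term we invoke \eqref{estimate3}, which needs $q>p^*(1-1/p)$.

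\textbf{Main obstacle.} The delicate point is checking that the hypotheses on $q$ imply $q>p^*(1-1/p)$. Under \ref{G small N nonresonant} we have $p^*-p/(p-1)\ge p^*-p^*/p$ exactly when $p^*\ge p^2/(p-1)$, i.e.\ $N\le p^2$. This holds in the range $p<N<p^2$. Under \ref{G small N}, the threshold satisfies $p^*-Np/[N(p-1)+p]\ge p^*(1-1/p)$ exactly when $N(N-p)\ge N(p-1)+p$; the comment after \ref{G small N} indicates this is the intended range, and it is compatible with $N^2/(N+1)\le p^2$. If this comparison fails in some subrange, one can use the other branch of \cite[Lemma A5]{Garcia-Azorero-Peral1994}, but I expect the stated threshold to be chosen precisely so that \eqref{estimate3} applies. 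Once this is settled, the result is $c_{10}=c\,(t^*)^q\widetilde C$, which is independent of $\varepsilon$ and $\delta$.
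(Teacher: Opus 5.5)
Your argument is the same as the paper's. For part (i) you identify the $H$-term via \eqref{bc2} and \ref{H1}, bound the $F$-term by \ref{F2}, drop $G\ge0$ and maximize explicitly. For part (ii) you use that the supremum is attained at $\widetilde t\ge t^*$ (this is where \eqref{sup large} enters), the uniform bound $\|\nabla z_{\varepsilon,\delta}\|_p^p\le S+c_1$, and \eqref{estimate3}. All of these steps are fine; the uniform upper bound on $\widetilde t$ that you mention is not needed.

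The one step that is wrong is your ``main obstacle'' check under \ref{G small N}. The equivalence you state is incorrect. Since $p^*/p=N/(N-p)$,
\[
p^*-\frac{Np}{N(p-1)+p}\ge p^*\Big(1-\frac1p\Big)\iff \frac{p}{N(p-1)+p}\le\frac{1}{N-p}\iff p(N-p)\le N(p-1)+p\iff N\le p^2+p,
\]
and not $N(N-p)\ge N(p-1)+p$. Your criterion already fails for $N=3$, $p=2$, which does satisfy $N^2/(N+1)\le p^2$. So your ``compatible'' remark contradicts your own computation, and the step is left open behind a hedge.

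The correct condition does follow from the hypothesis. If $N\ge p^2+p$, then $N^2\ge Np^2+Np>Np^2+p^2=p^2(N+1)$ because $N>p$. Hence $N^2/(N+1)\le p^2$ forces $N<p^2+p$. The threshold in \ref{G small N} therefore strictly exceeds $p^*(1-1/p)$, so $q>p^*(1-1/p)$. Together with your correct computation for \ref{G small N nonresonant}, this is exactly what the paper asserts when it writes that ``in any case'' $q>p^*/p'$, and no fallback to another branch of the García Azorero--Peral lemma is needed.

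In fact, the lower bound in \eqref{estimate3} holds for every $q>0$ in the regime the lemma imposes, $\varepsilon^{(N-p)/p}\delta^{-(N-p)/(p-1)}\le1$, i.e.\ $\varepsilon^{(p-1)/p}\le\delta$. On the ball $|x|\le\varepsilon^{(p-1)/p}/4$ the cutoff equals $1$ and $|x|^{p/(p-1)}\le\varepsilon$, so $z_{\varepsilon,\delta}\ge c\,\varepsilon^{-(N-p)(p-1)/p^2}$ there. Integrating over that ball gives $c\,\varepsilon^{\frac{p-1}{p}\left(N-\frac{q(N-p)}{p}\right)}$ directly. The restriction on $q$ matters only for sharpness of that estimate, not for the inequality you need.
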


	\begin{proof}
		By the definition of $\widetilde t(\varepsilon,\delta)$ in \eqref{sup tilde t} and $\e$ in \eqref{a}, using \ref{F2} and \ref{H2} again we get
		\begin{eqnarray}\label{g3}
			\sup_{t\geq0}E_0(t\e)%=E_0(\widetilde{t}\e)
			&=&\dfrac{\widetilde{t}^{p}}{p}\seq{1-\lambda p \om F(s_0\widetilde{z}_{\varepsilon,\delta},t_0\widetilde{z}_{\varepsilon,\delta})dx}-\widetilde{t}^{p^*}\om H(s_0\widetilde{z}_{\varepsilon\,\delta},t_0\widetilde{z}_{\varepsilon,\delta})dx-\om G(\widetilde{t}\e)dx\nonumber\\
			&=&\dfrac{\widetilde{t}^{p}}{p}\seq{1-\dfrac{\lambda p F(s_0,t_0)}{\|\nabla z_{\varepsilon,\delta}\|_{p}^p}\om z_{\varepsilon,\delta}^pdx}-\widetilde{t}^{p^*}\dfrac{H(s_0,t_0)}{\|\nabla z_{\varepsilon,\delta}\|_p^{p^*}}-\om G(\widetilde{t}\e)dx\nonumber\\
			&\leq&\dfrac{\tau^p}{p}\seq{\|\nabla z_{\varepsilon,\delta}\|_p^p-c_F\lambda\om z^{p}_{\varepsilon,\delta}dx}-\dfrac{\tau^{p^*}}{p^*}M_H^{p^*/p}-\om G(\widetilde{t}\e)dx
		\end{eqnarray}
		%C^\prime(t^*)^q\om z_{\varepsilon,\delta}^qdx
		for $\delta\in(0,\delta_1)$ and $\varepsilon>0$ such that $\varepsilon^\frac{N-p}{p}\delta^{-\frac{N-p}{p-1}}\le 1$, where $\tau=\widetilde{t}/\|\nabla z_{\varepsilon,\delta}\|_{p}$. Suppose first that $N\geq p^2$. From \eqref{g3} we get 
		\begin{eqnarray}\label{g4}
			E_0(t\e)\leq\dfrac{\tau^p}{p}\seq{\|\nabla z_{\varepsilon,\delta}\|_p^p-c_F\lambda\om z^{p}_{\varepsilon,\delta}dx}-\dfrac{\tau^{p^*}}{p^*}M_H^{p^*/p},
		\end{eqnarray}
		since $G\geq0$. Maximizing the right hand side of \eqref{g4} over all $\tau\geq0$, we get
		\begin{eqnarray*}%\label{g5}
			E_0(t\e)\leq\dfrac{1}{N}\left[M_H^{-1}\seq{\|\nabla z_{\varepsilon,\delta}\|^p_p-c_F\lambda\om z_{\varepsilon,\delta}^pdx}\right]^{N/p}.
		\end{eqnarray*}
		This together with \eqref{estimate1}, \eqref{estimate2} and Lemma \ref{bc3} gives \ref{lemma3-1}.
		Suppose now that either  $p<N<p^2$ and \ref{G small N nonresonant}, or   $N^2/(N+1)\leq p^2$ and \ref{G small N} hold.  In any case we have $G(s,t)\ge c_G(|s|^q+|t|^q)$, with $q>p^*(1/p')$. Since $\widetilde{t}(\varepsilon,\delta)\ge t^*$, by \eqref{estimate1} and \eqref{estimate3}  we get
		\[
		\om G(\widetilde{t}\e)dx\ge \frac{ c_G(|s_0|^q+|t_0|^q) \widetilde{t}^q}{\|\nabla z_{\varepsilon,\delta}\|_p^q}\om z_{\varepsilon,\delta}^q dx
		\ge c_{10}\varepsilon^{\frac{p-1}{p}\seq{N-\frac{q(N-p)}{p}}}.
		\]
		Then,  maximizing the first two terms on the right side of \eqref{g3} over all $\tau\geq0$, we obtain
		\[
		E_0(t\e)\leq\dfrac{1}{N}\left[M_H^{-1}\seq{\|\nabla z_{\varepsilon,\delta}\|^p_p-c_F\lambda\om z_{\varepsilon,\delta}^pdx}\right]^{N/p}-c_{10}\varepsilon^{\frac{p-1}{p}\seq{N-\frac{q(N-p)}{p}}}.
		\]
		Thus, \eqref{estimate1}, \eqref{estimate2} and Lemma \ref{bc3} give \ref{lemma3-2}, which completes the proof of this lemma.
	\end{proof}

	Now, let us consider $\lambda\in(0,\lambda_1)$. In this case, the functional $E_0$ has a mountain pass geometry. 
	\begin{lemma}\label{lemma5}
		Assume \ref{F1}-\ref{F2}, \ref{G subcri}-\ref{AR G} and \ref{H1}-\ref{H3}. If  $\lambda\in(0,\lambda_1)$ then:
		\begin{enumroman}
			\item\label{lemma5-1}  $E_0(0)=0$ and there is $\rho>0$ such that $\inf_{\mathcal{M}_\rho}E_0>0$, where $\mathcal{M}_\rho=\{\rho w:w\in\mathcal{M}\}$;
			\item\label{lemma5-2} If either $N\ge p^2$ or $p<N<p^2$ and \ref{G small N nonresonant} hold, 
			there is $\omega\in W$ such $\|\omega\|>\rho$, $E_0(\omega)<0$ and $\sup_{t\ge0}E_0(t\omega)<\dfrac{1}{N}S_H^{N/p}$.
		\end{enumroman}
	\end{lemma}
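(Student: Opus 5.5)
For \ref{lemma5-1}, I will argue as in the estimate \eqref{E(tw) M}, using Theorem \ref{Theorem 4}\ref{Theorem 4.i} in place of $\lambda_{k+1}$. For $w=(u,v)\in\mathcal{M}$ we have $p\int_\Omega F(u,v)\,dx=1/\Psi(w)\le 1/\lambda_1$. Combining this with \ref{H2}, \eqref{202} for $H$, the subcritical bound on $G$ obtained from \ref{G subcri}, and the Sobolev embeddings $W\incl L^{s}\times L^{s}$ for $s\in[p,p^*]$, I get
\[
E_0(\rho w)\ge \frac{\rho^p}{p}\Bigl(1-\frac{\lambda}{\lambda_1}\Bigr)-C\bigl(\rho^{q_1}+\rho^{q_2}+\rho^{p^*}\bigr)
\]
uniformly in $w\in\mathcal{M}$, with $q_1,q_2,p^*>p$. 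Since $\lambda<\lambda_1$, choosing $\rho$ small makes the right-hand side positive. Together with $E_0(0)=0$, this gives \ref{lemma5-1}.

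For \ref{lemma5-2}, I take the direction $e_{\varepsilon,\delta}$ from \eqref{a}, with $\delta\in(0,\delta_1]$ fixed and $\varepsilon\to0$, subject to $\varepsilon^{(N-p)/p}\delta^{-(N-p)/(p-1)}\le1$. By \ref{H2} and \eqref{z control}, $E_0(te_{\varepsilon,\delta})\le t^p/p-c\,t^{p^*}\to-\infty$ as $t\to\infty$. So I may set $\omega=Re_{\varepsilon,\delta}$ with $R>\rho$ large enough that $E_0(\omega)<0$.

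It remains to bound $\sup_{t\ge0}E_0(te_{\varepsilon,\delta})$. If this supremum is below $\frac{1}{2N}S_H^{N/p}$, there is nothing to prove. Otherwise \eqref{sup large} holds, and Lemma \ref{lemma3} applies. Its proof uses only \ref{F2}, \ref{H1}--\ref{H3} and $G\ge0$, so it does not depend on where $\lambda$ sits relative to the $\lambda_k$. I then split into cases:
\begin{itemize}
\item If $N>p^2$, Lemma \ref{lemma3}\ref{lemma3-1} gives the bound $\frac1N\bigl(S_H+c_8\varepsilon^{(N-p)/p}\delta^{-(N-p)/(p-1)}-\lambda c_2\varepsilon^{p-1}\bigr)^{N/p}$. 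Since $(N-p)/p>p-1$ exactly when $N>p^2$, the negative term dominates for small $\varepsilon$, and the bound is $<\frac1N S_H^{N/p}$.
\item If $N=p^2$, the positive correction is of order $\varepsilon^{p-1}\delta^{-p}$ and the negative one of order $\varepsilon^{p-1}|\log(\varepsilon\delta^{-p/(p-1)})|$. The logarithm wins as $\varepsilon\to0$ with $\delta$ fixed.
\item If $p<N<p^2$ and \ref{G small N nonresonant} holds, Lemma \ref{lemma3}\ref{lemma3-2} applies. Expanding $(S_H+x)^{N/p}\le S_H^{N/p}+Cx$, I must compare $\varepsilon^{(N-p)/p}$ against $c_{10}\varepsilon^{\frac{p-1}{p}(N-\frac{q(N-p)}{p})}$. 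The negative term wins if and only if $\frac{p-1}{p}\bigl(N-\frac{q(N-p)}{p}\bigr)<\frac{N-p}{p}$. This rearranges to $q>\frac{Np}{N-p}-\frac{p}{p-1}=p^*-\frac{p}{p-1}$, which is precisely \ref{G small N nonresonant}.
\end{itemize}

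The main obstacle is this exponent bookkeeping in the last case. I also need to confirm that Lemma \ref{lemma3}\ref{lemma3-2} is stated under \ref{G small N nonresonant} for $p<N<p^2$, and that the threshold $q>p^*(1-1/p)$ required by \eqref{estimate3} follows from $q>p^*-p/(p-1)$ in that range. Once both checks pass, fixing $\varepsilon$ small yields $\sup_{t\ge0}E_0(t\omega)<\frac1N S_H^{N/p}$.
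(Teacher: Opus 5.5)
Your proposal is correct and follows essentially the same route as the paper: part \ref{lemma5-1} comes from the lower bound using $1/\Psi(w)\le 1/\lambda_1$ on $\mathcal{M}$, and part \ref{lemma5-2} from taking $\omega=t\,e_{\varepsilon,\delta}$ with $\delta$ fixed, $\varepsilon$ small and $t$ large, via Lemma~\ref{lemma3}; your explicit dichotomy around \eqref{sup large} and your exponent comparisons fill in details the paper leaves implicit. Both checks you flag do pass: Lemma~\ref{lemma3}\ref{lemma3-2} is indeed stated under \ref{G small N nonresonant} for $p<N<p^2$, and $p^*-p/(p-1)>p^*(1-1/p)$ is equivalent to $p/(p-1)<N/(N-p)$, i.e.\ to $N<p^2$.
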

	\begin{proof}
		Due to \ref{G subcri} and \ref{H2}, as in \eqref{E(tw) M} but using Theorem \ref{Theorem 4} \ref{Theorem 4.i} in this case, we obtain
		\begin{align*}
			E_0(tw)&\ge\dfrac{{t}^{p}}{p}\seq{1-\frac{\lambda}{\lambda_1}}-{t}^{p^*}\om H(w)dx-\om G({t}w)dx\\
			&\ge \dfrac{{t}^{p}}{p}\seq{1-\frac{\lambda}{\lambda_1}}-C{t}^{p^*}-C_1t^{q_1},\quad\forall w\in\mathcal{M},\,t\in[0,1]
		\end{align*}
		for some $q_1\in(p,p^*)$, due to the boundedness of $\mathcal{M}$. Since $\lambda<\lambda_1$, \ref{lemma5-1} occurs for $\rho>0$ sufficiently small. On the other hand, due to Lemma \ref{lemma3} we see that for a fixed  $\delta\in(0,\delta_1)$ it is possible to choose $\varepsilon>0$ small enough such that
		\[
		\sup_{t\geq0}E_0(t\e)<\dfrac{1}{N}S_H^{N/p}.
		\]
		Considering $\omega=t\e$ for large $t>0$, we get \ref{lemma5-2}.
	\end{proof}

	\subsection{Proofs of main theorems}

	\noindent{\bf Proof of Theorem \ref{Theorem 1}}  
	For $\lambda>0$ we have $0<\lambda<\lambda_1$ or $\lambda_k\leq\lambda<\lambda_{k+1}$ for some $k\in\mathbb{N}$. 
	First we consider $\lambda\in[\lambda_k,\lambda_{k+1})$. In this case we will employ Theorem \ref{lk th}. When $N^2/(N+1)>p^2$, combining \eqref{sup on X} and Lemmas \ref{lemma2-1} and \ref{lemma3}  \ref{lemma3-1} give us
	\begin{eqnarray}\label{g6}
		\sup_{w\in X}E_0(w)\leq c_6\delta^{N(N-p)/p}+ \dfrac{1}{N}\seq{S_H+c_8\varepsilon^{(N-p)/p}\delta^{-(N-p)/(p-1)}-\lambda c_2\varepsilon^{p-1}}^{N/p}.
	\end{eqnarray}
	%with $c_6=0$ in case $\lambda\in(\lambda_k,\lambda_{k+1})$. If $\lambda=\lambda_k$, 
	Set $\delta=\varepsilon^\kappa$, where $\kappa>0$ is yet to be chosen. Then \eqref{g6} yields to
	\[
	\sup_{w\in X}E_0(w)\leq c_6\varepsilon^{\kappa N(N-p)/p}+\frac{1}{N}\seq{S_H+c_8\varepsilon^{(N-p)[1/p-\kappa/(p-1)]}-\lambda c_2\varepsilon^{p-1}}^{N/p}.
	\]
	So the second inequality in \eqref{lk th2} will follow for sufficiently small $\varepsilon>0$ if $\kappa>0$ can be chosen so that $\min\{\kappa N(N-p)/p;(N-p)[1/p-\kappa/(p-1)]\}> p-1$. This is only possible if
	\[
	(p-1)\left[\frac{1}{p}-\frac{p-1}{N-p}\right]>\frac{p(p-1)}{N(N-p)},
	\]
	which is equivalent to $N^2/(N+1)>p^2$. Joining this information with Proposition \ref{bc4}, \eqref{same index} and \eqref{supA<infB}, we can apply Theorem \ref{lk th} to get a nontrivial critical point of $E_0$.
	
	Suppose now that $\lambda\in[\lambda_k,\lambda_{k+1})$, $N^2/(N+1)\leq p^2$ and \ref{G small N} hold. Then \eqref{sup on X} and Lemmas \ref{lemma2-1}, \ref{lemma3} \ref{lemma3-2} combined give us
	\begin{eqnarray}\label{g7}
		\sup_{w\in X}E_0(w)\leq c_6\delta^{N(N-p)/p}+\dfrac{1}{N}\seq{S_H+c_8\varepsilon^{\frac{N-p}{p}}\delta^{-\frac{N-p}{p-1}}}^{N/p}-c_{10}\varepsilon^{\frac{p-1}{p}\seq{N-\frac{q(N-p)}{p}}}.
	\end{eqnarray}
	Set $\delta=\varepsilon^{\kappa}$, where $\kappa>0$ is yet to be chosen. Then \eqref{g7} yields to
	\[
	\sup E_0(X)\leq c_6\varepsilon^{\kappa N(N-p)/p}+\frac{1}{N}\seq{S_H+c_8\varepsilon^{(N-p)\left(\frac{1}{p}-\frac{\kappa}{p-1}\right)}}^{N/p}-c_{10}\varepsilon^{\frac{p-1}{p}\seq{N-\frac{q(N-p)}{p}}}.
	\]
	So the second inequality in \eqref{lk th2} will follow for sufficiently small $\varepsilon>0$ if $\kappa>0$ can be chosen so that 
	$$
	\min\left\{\kappa \frac{N(N-p)}p; (N-p)\seq{\frac1p-\frac{\kappa}{p-1}}\right\}>\frac{p-1}{p}\seq{N-\frac{q(N-p)}p}.
	$$
	This is possible if and only if 
	\begin{eqnarray*}
		\dfrac{(p-1)(Np-q(N-p))}{pN(N-p)}<\dfrac{p-1}{p}\seq{\dfrac{2N-Np-p}{N-p}}+q\dfrac{(p-1)^2}{p^2},
	\end{eqnarray*}
	which is the same as the condition on $q$ given in \ref{G small N},
	\(
	q>p^*-{Np}/[{N(p-1)+p}].
	\)
	As before,   Theorem \ref{lk th} now gives us a nontrivial critical point of $E_0$.
	
	If $\lambda\in(0,\lambda_1)$, due to Lemma \ref{lemma5} and Proposition \ref{bc4} we can apply the Mountain Pass theorem of Ambrosetti and Rabinowitz \cite{AR73} to get a nontrivial critical point for $E_0$. 
	This completes the proof of Theorem \ref{Theorem 1}.
	\bigskip
	
	\noindent{\bf Proof of Theorem \ref{Theorem 2}}.
	As in the proof of Theorem \ref{Theorem 1}, when $\lambda\in(0,\lambda_1)$,  the Mountain Pass theorem can be applied to provide a nontrivial critical point for $E_0$.  Let us consider $\lambda\in(\lambda_k,\lambda_{k+1})$. In this case we will employ again Theorem \ref{lk th}.  We fix $\delta\in(0,\delta_1)$ small enough so that $\lambda_k+c_3\delta^{N-p}\leq\lambda$. Therefore, by \eqref{sup on X} and Lemma \ref{lemma2-1} one gets
	\begin{equation}\label{sup X not eigenvalue}
		\sup_{w\in X}E_0(w)\leq  \sup_{t\geq0}E_0(t\e).
	\end{equation}
	If $N\geq p^2$,  due to Lemma \ref{lemma3} \ref{lemma3-1} we obtain
	\begin{eqnarray*}
		\sup_{w\in X}E_0(w)\leq \begin{cases}
			\dfrac{1}{N}\seq{S_H+c_8\varepsilon^{(N-p)/p}\delta^{-(N-p)/(p-1)}-\lambda c_2\varepsilon^{p-1}}^{N/p}&\quad\text{if}\quad N>p^2,\vspace{0.2cm}\\
			\dfrac{1}{N}\seq{S_H+c_8\varepsilon^{p-1}\delta^{-p}-\lambda c_9\varepsilon^{p-1}|\log(\varepsilon\delta^{-p/(p-1)})|}^{N/p}&\quad\text{if}\quad N=p^2.
		\end{cases}
	\end{eqnarray*}
	Choosing $\varepsilon>0$ small enough we verify that
	\begin{align}\label{sup E0 X c*}
		\sup_{w\in X}E_0(w)<\dfrac{1}{N}S_H^{N/p}.
	\end{align}
	Joining this information with Proposition \ref{bc4}, \eqref{same index} and \eqref{supA<infB}, we see that Theorem \ref{lk th} can be applied and gives us a nontrivial solution to system \eqref{101}.
	
	Suppose now that $p<N<p^2$ and \ref{G small N nonresonant} hold. Then  by \eqref{sup X not eigenvalue} and Lemma \ref{lemma3} \ref{lemma3-2} we have
	\begin{eqnarray*}
		\sup_{w\in X}E_0(w)\leq \dfrac{1}{N}\seq{S_H+c_8\varepsilon^{\frac{N-p}{p}}\delta^{-\frac{N-p}{p-1}}}^{N/p}
		-c_{10}\varepsilon^{\frac{p-1}{p}\seq{N-\frac{q(N-p)}{p}}}.
	\end{eqnarray*}
	Due to the range allowed for $q$ in \ref{G small N nonresonant}, %$(N-p)/p > [(p-1)/p][N-q(N-p)/p]$ if and only if $q>p^*-p'$
	we can choose $\varepsilon>0$ small enough so that \eqref{sup E0 X c*} also holds in this case.
	As before, Theorem \ref{lk th}  provides a nontrivial solution to system \eqref{101}.
	This concludes the proof of Theorem \ref{Theorem 2}.

	\bigskip
	
	\noindent  {\bf Proof of Theorem \ref{Theorem 5}}. In case $\lambda\ge\lambda_1$ we will employ Theorem \ref{thm:2.1}, with $C_\delta$ given in \eqref{C_delta def}, %where $\delta>0$ is small enough so that $\lambda_k+c_3\delta^{N-p}\leq\lambda$, 
	$w_\delta=\e$ defined in \eqref{a} and $R>0$ as given by Lemma \ref{lemma1-5}. This same lemma gives us inequality \eqref{eq:2.8}.  Under the assumptions of Theorem \ref{Theorem 1} or \ref{Theorem 2}, we verified in their proofs that it is possible to choose $\varepsilon$ and $\delta>0$ sufficiently small so that
	\[
	\sup_{w\in X}E_0(w)<\frac{1}{N}S_H^{N/p}
	\]
	for $X$ defined in \eqref{X def}, which can be written as $X=\{tw: w\in C_\delta\cup A_\delta,\,t\in[0,R]\}$.  This implies that inequality \eqref{eq:2.9} holds. Therefore, Theorem \ref{thm:2.1} ensures the existence  of $\mu_0>0$ such that $E$ has two nontrivial critical points,   whence system 
	\eqref{101 pert} has two nontrivial solutions, whenever $0<\|\gamma\|_{(p^*)'}+\|\eta\|_{(p^*)'}<\mu_0$.
	
	\bigskip
	
	In the case \( \lambda \in (0,\lambda_{1}) \), the variational geometry becomes
	considerably simpler.
	Indeed, it suffices to choose \( w_{\delta} = e_{\varepsilon,\delta} \) as defined in
	\eqref{a}.
	Using the estimates obtained above (Proposition \ref{bc4} and Lemma \ref{lemma5}), one verifies that condition~\eqref{eq:2.91}
	is satisfied for \( \varepsilon, \delta > 0 \) sufficiently small. Consequently,  the assumptions of Theorem~\ref{thm:2.1 small lambda} are fulfilled,
	and the conclusion follows.
	%%%%%%%%%%%%%%%%%%%%%%%%%%%%%%%%%%%%%%%%%%%%%%%%%%%%%%%%%%%%%%%%%%%%%%%%%%%%%%%%%%%%%%%%%%%%%%%%%%%%%%%%%%%%%%%%%%%%
	
	\bigskip
	
	\noindent\textbf{Acknowledgments.}
	B.~Ribeiro acknowledges financial support from CNPq-Brazil through grants
	443594/2023-6, 314111/2023-9, and 201452/2024-3.
	E.~Gloss acknowledges financial support from CNPq through grants
	201454/2024-6 and 443594/2023-6.
	
	\bigskip
	
	\noindent\textbf{Data availability statement:}
	This manuscript does not use any data.
	
	\medskip
	
	\noindent\textbf{Conflict of interest statement:}
	The authors declare that there are no conflicts of interest.

	%%%%%%%%%%%%%%%%%%%%%%%%%%%%%%%%%%%%%%%%%%%%%%%%%%%%%%%%%%%%%%%%%%%%%%%%%%%%%%%%%%%%%%%%%%%%%%%%%%%%%%%%%%%%%%%%%%%%

\end{document}